\numberwithin{equation}{section}
\newcommand{\rd}{{\rm d}}
\newcommand{\be}{\begin{equation}}
\newcommand{\ee}{\end{equation}}
\newcommand{\e}{{\varepsilon}}
\newcommand{\wt}{\widetilde}
\newcommand{\ii}{\mathrm{i}} 
\renewcommand{\epsilon}{\varepsilon}
\renewcommand{\leq}{\leqslant}
\renewcommand{\geq}{\geqslant}
\renewcommand{\P}{\mathbb{P}}
\newcommand{\E}{\mathbb{E}}
\DeclareMathOperator{\var}{Var}
\DeclareMathOperator{\im}{Im}
\DeclareMathOperator{\OO}{O}
\DeclareMathOperator{\oo}{o}
\theoremstyle{plain} 
\newtheorem{theorem}{Theorem}[section]
\newtheorem*{theorem*}{Theorem}
\newtheorem{lemma}[theorem]{Lemma}
\newtheorem*{lemma*}{Lemma}
\newtheorem{corollary}[theorem]{Corollary}
\newtheorem*{corollary*}{Corollary}
\newtheorem{proposition}[theorem]{Proposition}
\newtheorem*{proposition*}{Proposition}
\newtheorem{definition}[theorem]{Definition}
\newtheorem*{definition*}{Definition}
\newtheorem*{example*}{Example}
\newtheorem{remark}[theorem]{Remark}
\newtheorem*{remark*}{Remark}
\newtheorem*{remarks*}{Remarks}
\renewcommand{\subsection}{\@startsection
{subsection}
{2}
{0mm}
{-\baselineskip}
{0 \baselineskip}
{\normalfont\itshape}} 
\newcommand{\nc}{\normalcolor}
\def\@empty{}
\def\author#1{\par
    {\centering{\authorfont#1}\par\vspace*{0.05in}}
}
\def\titlefont{\fontsize{13}{15}\bfseries\boldmath\selectfont\centering{}}
\def\authorfont{\fontsize{13}{15}}
\def\abstractfont{\fontsize{8}{10}}
\let\affiliationfont\rhfont
\def\address#1{\par
    {\centering{\affiliationfont#1\par}}\par\vspace*{11pt}
}
\def\body{
\setcounter{footnote}{0}
\def\thefootnote{\alph{footnote}}
\def\@makefnmark{{$^{\rm \@thefnmark}$}}
}
\def\title#1{
    \thispagestyle{plain}
    \vspace*{-14pt}
    \vskip 79pt
    {\centering{\titlefont #1\par}}%
    \vskip 1em
}
\renewenvironment{abstract}{\par%
    \vspace*{6pt}\noindent 
    \abstractfont
    \noindent\leftskip18pt\rightskip18pt
}{%
  \par}
\renewcommand{\section}{\@startsection
{section}
{1}
{0mm}
{-2\baselineskip}
{1\baselineskip}
{\normalfont\large\scshape\centering}} 
\begin{document}

~\vspace{-2cm}

\title{Gaussian fluctuations of the determinant of Wigner Matrices}

\vspace{1.2cm}
\noindent
\begin{minipage}[b]{0.5\textwidth}

 \author{P. Bourgade}

\address{New York University\\
   E-mail:  bourgade@cims.nyu.edu}

 \end{minipage}
\begin{minipage}[b]{0.5\textwidth}

 \author{K. Mody}

\address{New York University\\
   E-mail: km2718@nyu.edu}
 \end{minipage}

\begin{abstract}
We prove that the logarithm of the determinant of a Wigner matrix satisfies a central limit theorem in the limit of large dimension.
Previous results about fluctuations of such determinants required that the first four moments of the matrix entries match 
those of a Gaussian \cite{TaoVu2012}.
Our work treats symmetric and Hermitian matrices with centered entries having the same variance and subgaussian tail. 
In particular, it applies to symmetric Bernoulli matrices and answers an open problem raised in \cite{TaoVu2014}.
The method relies on 
(1) the observable introduced in \cite{BourgadeExtreme} and the stochastic advection equation it satisfies,
(2) strong estimates on the Green function as in \cite{CacMalSch2015},
(3) fixed energy universality \cite{BouErdYauYin2016},
(4) a moment matching argument \cite{TaoVu2011} using Green's function comparison \cite{ErdYauYin2012Bulk}.
\end{abstract}

\tableofcontents

\let\thefootnote\relax\footnote{\noindent The work of P.B. is partially supported by the NSF grant DMS\#1513587 and a Poincar\'e Chair.}

\section{Introduction}
	
	\noindent In this paper, we address the universality of the determinant of a class of random Hermitian matrices.
	Before discussing results specific to this symmetry assumption, we give a brief history of results in the non-Hermitian setting. In both settings, a priori bounds preceded estimates on moments of determinants, and the distribution of
	determinants for integrable models of random matrices. The universality of such determinants has then been the 
	subject of recent active research.

	\subsection{Non-Hermitian matrices.}\ 
	Early papers on this topic treat non-Hermitian matrices with independent and identically distributed entries.
		More specifically, Szekeres and Tur\'an first studied an extremal problem on  the determinant of $\pm 1$  matrices \cite{SzeTur1937}. 
	In the 1950s, a series of papers \cite{For1951,ForTuk1952, NyqRicRio1954, Tur1955, Pre1967} calculated 
	moments of the determinant of random matrices of fixed size (see also \cite{Gir1980}). 
	In general, explicit formulae are unavailable for high order moments of the determinant except
	when the entries of the matrix have particular distribution (see, for example, \cite{Dem1989} and the references therein).
	Estimates for the moments and the Chebyshev inequality give upper bounds
	on the magnitude of the determinant. \\
	
	\noindent Along a different line of research,
	for an $N\times N$ non-Hermitian random matrix $A$,
	Erd\H{o}s asked whether $\det A$ is non-zero with probability
	tending to one as $N$ tends to infinity.  In \cite{Kom1967, Kom1968}, Kolm\'{o}s proved
	that for random matrices with Bernoulli entries,
	indeed $ \det A \neq 0$ with probability converging to 1 with $N$. 
	In fact, this method works for more general models, and following \cite{Kom1967},
	\cite{KahKomSze1995, TaoVu2006, TaoVu2007, BouVuWoo2010} give improved, exponentially small bounds on the probability that $\det A = 0$. \\

	\noindent 
	In \cite{TaoVu2006}, the authors made the first steps towards quantifying the typical size of $\left| \det A \right|$,
	proving that for Bernoulli random matrices, with probability
	tending to 1 as $N$ tends to infinity,
		\begin{equation}
		\label{tao eq 1}
		\sqrt{N!}\exp\left(-c \sqrt{N\log N}\right) \leq \left| \det A\right| \leq \omega(N)\sqrt{N!}, 
		\end{equation}
	for any function $\omega(N)$ tending to infinity with $N$. In particular, with overwhelming probability
		\[ \log \left| \det A \right| = \left( \frac{1}{2} + \oo(1) \right) N\log N. \]
	
	\noindent In \cite{Goo1963}, Goodman considered $A$ with independent standard real Gaussian entries.
	In this case, he was able to express $\left| \det A \right|^2$ as the product of independent
	chi-square variables. This enables one to identify the asymptotic distribution
	of $\log\left| \det A \right|$. Indeed, one can prove that
		\begin{equation}
		\label{GOE result}
		 \frac{\log \left| \det A \right|  - \frac{1}{2}\log N! + \frac{1}{2}\log N}{\sqrt{\frac{1}{2}\log N}} 
			\to \mathscr{N}(0, 1),
		\end{equation}
	(see \cite{RemWes2005}). In the case of $A$ with independent complex Gaussian entries, a similar analysis yields 
		\[ \frac{\log \left| \det A \right|  - \frac{1}{2}\log N! + \frac{1}{4}\log N}{\sqrt{\frac{1}{4}\log N}} 
			\to \mathscr{N}(0, 1). \]
	
	\noindent 
	In \cite{NguVu2014}, the authors proved (\ref{GOE result}) holds under just
	an exponential decay hypothesis on the entries. Their method yields an explicit  rate of convergence
	and extends to handle the complex case. Then in \cite{BaoPanZho2015}, the authors extended 
	(\ref{GOE result}) to the case where the matrix entries only require bounded fourth moment.\\
	
	\noindent The analysis of determinants of non-Hermitian random matrices relies crucially on the
	assumption that the rows of the random matrix are independent. The fact that this independence
	no longer holds for Hermitian random matrices forces one to look for new methods to prove similar
	results to those of the non-Hermitian case. 
	Nevertheless, the history of this problem mirrors the history of the non-Hermitian case. 
	
	\subsection{Hermitian matrices.}\ In the 1980s, Weiss posed the Hermitian analogs of \cite{Kom1967, Kom1968} as an open problem.
	This problem was solved, many years later in \cite{CosTaoVu2006}, and 
	then in \cite[Theorem 34]{TaoVu2011} the authors proved the Hermitian analog of 
	(\ref{tao eq 1}). This left open the question of describing the limiting
	distribution of the determinant. \\	
	
	\noindent In \cite{DelLeC2000}, Delannay and Le Ca\"{e}r used
	the explicit formula for the joint distribution of the eigenvalues to prove
	that for $H$ an $N \times N$ matrix drawn from the GUE,
		\begin{equation}
		\label{GUE CLT}
		 \frac{\log \left| \det H \right|  - \frac{1}{2}\log N! + \frac{1}{4}\log N}{\sqrt{\frac{1}{2}\log N}} 
			\to \mathscr{N}(0, 1). 
		\end{equation}
	Analogously, one has
		\begin{equation}
		\label{GOE CLT}
		\frac{\log \left| \det H \right|  - \frac{1}{2}\log N! + \frac{1}{4}\log N}{\sqrt{\log N}}
			\to \mathscr{N}(0, 1)
		\end{equation}
	when $H$ is drawn from the GOE.
	Proofs of these central limit theorems also appear in \cite{TaoVu2012, CaiLiaZho2015, BorLaC2015,
	EdeLaC2015}. For related results concerning other models of random matrices, see \cite{Rou2007} and the references therein.\\

	\noindent While the authors of \cite{TaoVu2012} give their own proof of (\ref{GUE CLT}) and (\ref{GOE CLT}), their main interest is
	to establish such a result in the more general setting of Wigner matrices. 
	Indeed, they show that in (\ref{GOE CLT}), we may replace $H$ by $W$, a Wigner matrix whose entries' first
	four moments match those of $\mathscr{N}(0,1)$.
	They also prove the analogous result in the complex case.
	In this paper, we will relax this four moment matching assumption to a 
	two moment matching assumption (see Theorem \ref{main theorem}). \\
	
	\noindent Finally, we mention that new interest in averages of determinants of random (Hermitian) matrices 
	has emerged from the study of complexity of high-dimensional landscapes \cite{FyoWil2007,AufBenCer2013}.

	\subsection{Statement of results: The determinant. }
This subsection gives our main result and suggests extensions in connection with the general class of log-correlated random fields. Our theorems apply to Wigner matrices as defined below.

	\begin{definition}
	\label{def wigner}
        	A complex Wigner matrix, $W = \left(w_{ij}\right)$, is an $N\times N$ 
	Hermitian matrix with entries  
		\[ W_{ii} = \sqrt{\frac{1}{N}}\,x_{ii},\, i = 1, \dots, N, \quad W_{ij} = \frac{1}{\sqrt{2N}} \left(x_{ij} + {\rm i} y_{ij}\right), \, 
		1 \leq i < j \leq N.\]
	Here 
	$\{x_{ii}\}_{1\leq i \leq N}$, $\{x_{ij}\}_{1 \leq i < j \leq N}$, $\{y_{ij}\}_{1 \leq i <  j \leq N}$ are 
	independent identically distributed random variables satisfying
		$
		\mathbb{E} \left(x_{ij}\right) = 0, \mathbb{E}\left(x_{ij}^2\right) = \mathbb{E}\left(y_{ij}^2\right) = 1. 
		$
	We assume further 
        	that the common distribution $\nu$ of $\{x_{ii}\}_{1\leq i \leq N}$, $\{x_{ij}\}_{1 \leq i < j \leq N}$, $\{y_{ij}\}_{1 \leq i <  j \leq N}$, has subgaussian decay, i.e.\ there exists 
        	$\delta_0 > 0$ such that
        		\begin{equation}
		\label{subgaussian} 
		\int_\mathbb{R} e^{\delta_0 x^2}{\rm d}\nu(x) < \infty. 
		\end{equation}
	In particular, this means that all the moments of the entries of the matrix are bounded.
	In the special case $\nu = \mathscr{N}(0,1)$, 
	$W$ is said to be drawn from the Gaussian Unitary Ensemble (GUE). 
	
	Similarly, we define a real Wigner matrix to have entries of the form
		$ W_{ii} = \sqrt{\frac{2}{N}} x_{ii}$, $W_{ij} = \sqrt{\frac{1}{N}} x_{ij}$, 
	where $\left\{x_{ij}\right\}_{1\leq i, j \leq N}$ are independent identically distributed random variables satisfying
		$ \mathbb{E}\left(x_{ij}\right) = 0, \mathbb{E}\left(x^2_{ij}\right) = 1.$
	As in the complex case, we assume
	the common distribution $\nu$ satisfies (\ref{subgaussian}).
	In the special case $\nu = \mathscr{N}(0,1)$,
	$W$ is said to be drawn from the Gaussian Orthogonal Ensemble (GOE).
	\end{definition}
			
	\noindent Our main result extends (\ref{GUE CLT}) and  (\ref{GOE CLT}) to the above class of Wigner matrices.  In particular,
	this answers a  conjecture from \cite[Section 8]{TaoVu2014}, which asserts that the central limit theorem 
	(\ref{GOE CLT})
	holds for Bernoulli ($\pm 1$) matrices. Note that in the following statement, our centering
	differs from (\ref{GUE CLT}) and (\ref{GOE CLT}) because we normalize our matrix entries
	to have variance of size $N^{-1}$.

	\begin{theorem}\label{main theorem}
		Let $W$ be a real Wigner matrix satisfying (\ref{subgaussian}). Then
		\begin{equation}\label{main}
		\frac{\log\left| \det W\right| + \frac{N}{2} }{\sqrt{\log N}} \to \mathscr{N}(0, 1).  
		\end{equation}
		If $W$ is a complex Wigner matrix satisfying (\ref{subgaussian}), then
		\begin{equation}
			\frac{\log\left| \det W \right| + \frac{N}{2}}{\sqrt{\frac{1}{2}\log N}} 
			\to \mathscr{N}(0, 1). 
		\end{equation}
	\end{theorem}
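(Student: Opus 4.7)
The starting identity is $\log|\det W| = \sum_{i=1}^N \log|\lambda_i|$, so (\ref{main}) is a CLT for a linear eigenvalue statistic with the singular test function $x \mapsto \log|x|$. Since $0$ lies in the bulk of the semicircle ($\rho_{sc}(0) = 1/\pi > 0$), the delicate feature is the behaviour of the smallest $|\lambda_i|$. The plan is to regularize
\[
\log|\det W| \;=\; \re\log\det(W - \ii\eta) \;+\; \tfrac{1}{2}\sum_i \log\!\Big(1 - \tfrac{\eta^2}{\lambda_i^2 + \eta^2}\Big)
\]
with $\eta = N^{-1-\delta}$ for some small $\delta > 0$, use the polynomial lower bound $\min_i |\lambda_i| \geq N^{-C}$ of \cite{TaoVu2011} to show that the second term is $\oo(\sqrt{\log N})$, and then attack the first term by resolvent methods.

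\emph{Gaussian divisibility.} Consider the Dyson Ornstein--Uhlenbeck flow $W_t = \me^{-t/2} W + (1 - \me^{-t})^{1/2} G$, with $G$ an independent GOE (resp.\ GUE) matrix, and set the target time $t_* = N^{-1+\e}$. This is short enough that $W_{t_*}$ can be compared to $W$ via a Green function swap, and long enough that fixed energy universality \cite{BouErdYauYin2016} pins the local spectral statistics of $W_{t_*}$ at the energy $0$ to those of GOE/GUE.

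\emph{CLT along DBM and moment matching.} For $W_t$ I would analyze the observable of \cite{BourgadeExtreme},
\[
g_t(z) \;=\; \tr \log(W_t - z) \;-\; N\!\int \log(x-z)\,\dd\rho_{sc}(x),
\]
which under DBM satisfies an approximate stochastic advection equation of the form $\dd g_t(z) = m_{sc}(z)\,\partial_z g_t(z)\,\dd t + \dd M_t(z)$. The method of characteristics expresses $g_{t_*}(-\ii\eta)$ as an explicit Gaussian stochastic integral, of variance $\log N$ (resp.\ $\frac{1}{2}\log N$), plus the initial data evaluated at a point further from the real axis. The error terms in this equation are controlled by the sharp Green function bounds of \cite{CacMalSch2015} down to the scale $\eta = N^{-1-\delta}$, which yields the CLT for $\log|\det W_{t_*}|$. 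To transfer the statement to $W = W_0$ one runs a Lindeberg-style entry swap in the Green function comparison framework of \cite{ErdYauYin2012Bulk, TaoVu2011}: since the first two moments of $W$ and $W_{t_*}$ match by construction, Taylor expanding a smooth function of $\tr (W - \ii s)^{-1}$ (integrated in $s$ to reconstruct $\re\log\det(W-\ii\eta)$) to third order gives a total shift of order $N^2 t_*^{3/2}\,\eta^{-\OO(1)} = \oo(\sqrt{\log N})$ for $\e, \delta$ chosen sufficiently small. Combined with Step~2, this delivers (\ref{main}).

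\emph{Main obstacle.} The technical crux is that $\log|x|$ is sensitive to eigenvalues on the microscopic scale $N^{-1}$, well below the reach of generic local laws and of linear-statistic CLTs for smooth test functions. Pushing the analysis of the observable --- both its DBM evolution and the required a priori estimates --- down to $\eta = N^{-1-\delta}$, while simultaneously keeping under control the $\eta^{-1}$ losses that appear in the Green function comparison at that scale, is the main difficulty; the four ingredients quoted in the abstract are calibrated precisely to close this gap.
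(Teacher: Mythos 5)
Your proposal has two genuine gaps, both at the points where the scales must be calibrated. First, the regularization step: knowing $\min_i|\lambda_i|\geq N^{-C}$ with high probability (the Tao--Vu-type bound) does \emph{not} show that the correction term $\frac12\sum_i\log\bigl(1-\eta^2/(\lambda_i^2+\eta^2)\bigr)$ with $\eta=N^{-1-\delta}$ is $\oo(\sqrt{\log N})$. A single eigenvalue sitting at, say, $|\lambda_i|\sim N^{-10}$ is perfectly compatible with that bound and contributes $\approx 2\log(|\lambda_i|/\eta)\sim -\log N$, which swamps $\sqrt{\log N}$. What is really needed is that with probability tending to $1$ there is \emph{no} eigenvalue within distance $\oo(1/N)$ of the energy $0$, i.e.\ a fixed-energy gap/repulsion statement, plus control of the expected number of eigenvalues in windows of size up to the regularization scale. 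This is exactly what the paper extracts from fixed energy universality \cite{BouErdYauYin2016} (Corollaries \ref{repulsion} and \ref{micro fixed energy}) together with the strong Stieltjes-transform bound of \cite{CacMalSch2015}, and it is why the paper regularizes slightly \emph{above} the microscopic scale, at $\eta_0=e^{(\log N)^{1/4}}/N$, rather than below it (Proposition \ref{smoothing}).

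Second, your DBM step with $t_*=N^{-1+\e}$ does not deliver the Gaussian law. Along the characteristics, $g_{t_*}(-\ii\eta)$ with $\eta\approx 1/N$ is the initial data evaluated at $z_{t_*}$ with $\im z_{t_*}\sim N^{-1+\e}$ plus a stochastic integral whose variance is only of order $\e\log N$: the characteristic only travels a distance $\sim t_*$, so the bulk of the variance, $\sim(1-\e)\log N$, remains in the initial data $g_0(z_{t_*})$ --- a mesoscopic linear statistic of the \emph{original} Wigner matrix at scale $N^{-1+\e}$, whose CLT is essentially the problem you started with (and which you never establish). Your claim that the stochastic integral has variance $\log N$ is therefore incorrect for short times; equally, the comparison estimate ``$N^2t_*^{3/2}\eta^{-\OO(1)}=\oo(\sqrt{\log N})$'' is not tenable since $\eta^{-\OO(1)}$ is polynomially large. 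The paper resolves this tension by the opposite calibration: it runs the dynamics for the \emph{long} time $\tau=N^{-\e}$, couples the Wigner and GOE eigenvalue flows with the same Brownian motions so the martingale terms cancel (Lemma \ref{extreme}, Proposition \ref{prop:advection}), and shows the remaining initial-data difference at the nearly macroscopic scale $z_\tau\sim\ii N^{-\e}$ has variance $\OO(\e\log N)$ (Lemma \ref{expectation}, Proposition \ref{variance}), so the CLT is inherited from the known GOE/GUE result. The price of the long time is that two-moment matching no longer suffices for the transfer back to $W$: one must pre-adjust $\tilde W_0$ via Lemma \ref{moment matching} so that $\tilde W_\tau$ matches $W$ to three moments exactly and four up to $\OO(\tau)$, and then run the Green function comparison of Theorem \ref{4 moment matching theorem}. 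As written, your short-time/two-moment scheme cannot be closed without importing an independent mesoscopic CLT at scale $N^{-1+\e}$ for general Wigner matrices.
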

	
	 \noindent Assumption (\ref{subgaussian}) may probably  be relaxed to a finite moment assumption, but we will not pursue this direction here. Similarly, it is likely that the matrix entries do not need to be identically distributed;
	 only the first two moments need to match. However we consider the case of a 
	 unique $\nu$ in this paper.

	\begin{remark}\label{rem:multidim}
	Let $H$ be drawn from the GUE normalized so that in the limit as $N\to\infty$, the distribution
	of its eigenvalues is supported on $[-1,1]$, and let
		\[D_N(x) = -\log\left| \det\left( H - x\right) \right|. \]
	In \cite{Kra2007}, Krasovsky proved that  for $x_k \in (-1,1)$, $k =1, \hdots, m$, 
	$x_j \neq x_k$, uniformly in $\Re \left(\alpha_k\right) > - \frac{1}{2}$, $\mathbb{E}\left( e^{-\sum_{k=1}^m \alpha_k D_N\left(x_k\right)} \right)$ is asymptotic to
		\begin{align}\label{krasovsky}
			\prod_{k=1}^m \left( C\left(\frac{\alpha_k}{2}\right) \left(1-x_k^2\right)^{\frac{\alpha_k^2}{8}} 
			 N^{\frac{\alpha_k^2}{4}} e^{\frac{\alpha_k N}{2}\left( 2x_k^2 -1 - 2\log 2 \right)}\right)
			 &\prod_{1\leq \nu < \mu \leq m} \left(2\left| x_\nu - x_\mu \right| \right)^{-\frac{\alpha_\nu \alpha_\mu}{2}}
			 \left(1 + {\rm O}\left( \frac{\log N}{N} \right) \right),
		\end{align}
	as $N\to\infty$. Here $C(\cdot)$ is the Barnes function.
	Since the above estimate holds uniformly for $\Re \left(\alpha_k\right) > -\frac{1}{2}$,  
	(\ref{krasovsky}) shows that
	letting
		\[	
			\wt D_N(x) = \frac{D_N(x) - N\left( x^2 - \frac{1}{2} - \log 2 \right)}{\sqrt{\frac{1}{2} \log N}},
		\]
	the vector $\left( \wt D_N\left(x_1\right), \hdots, \wt D_N\left(x_m\right) \right)$ converges in distribution 
	to a collection of $m$ independent standard Gaussians. Our proof of Theorem
	\ref{main theorem} automatically extends this result to Hermitian Wigner matrices as defined above.
	If one were to prove an analogous convergence for the GOE, our proof of Theorem
	\ref{main theorem} would extend the result to real symmetric Wigner matrices as well. 
	\end{remark}

\begin{remark} \label{rem:multidim2}
	We note that (\ref{krasovsky}) was proved for fixed, distinct $x_k$'s. If  (\ref{krasovsky}) holds for collapsing
	$x_k$'s, this means that fluctuations of the log-characteristic polynomial of the GUE  become log-correlated for large dimension, 
	as in the case of the Circular Unitary Ensemble \cite{BourgadeZeta}. More specifically, let
	$\wt D_N(\cdot)$ be as above, and let $\Delta$ denote the distance between two points $x, y$ in
	$(-1, 1)$. For $\Delta \geq 1/N$, we expect the covariance between $\wt D_N(x)$ and $\wt D_N(y)$
	to behave like $\frac{\log\left(1/\Delta\right)}{\log N}$, and for $\Delta \leq 1/N$, we expect it to
	converge to 1.
\end{remark}

\noindent 
	Our method  automatically
	 establishes the content of Remark \ref{rem:multidim2} for Wigner matrices,
	 conditional on the knowledge of GOE and GUE cases. 
	 The exact statement is as follows, and we omit the proof, strictly similar to Theorem \ref{main theorem}. Denote
	 	$$
	 	L_N(z)=\log|\det(W-z)|-N\int_{-2}^2\log|x-z|\,\rd \rho_{\rm sc}(x).
	 	$$

\begin{theorem}\label{multldim}
		Let $W$ be a real Wigner matrix satisfying (\ref{subgaussian}). Let 
		$\ell\geq 1$, $\kappa>0$ and let $E^{(1)}_N)_{N\geq 1},\dots$, $(E^{(\ell)}_N)_{N\geq 1}$
		be energy levels in $[-2+\kappa,2-\kappa]$.
		Assume that for all $i\neq j$, for some constants $c_{ij}$ we have
$$
\frac{\log |E_N^{(i)}-E_N^{(j)}|}{-\log N}\to c_{ij}\in[0,\infty]
$$		
as $N\to\infty$. Then
\begin{equation}\label{eqn:logcor}
\frac{1}{\sqrt{\frac{1}{2}\log N}}\left(
L_N\left( \left(E^{(1)}_N\right) \right),\dots,
L_N\left(\left(E^{(\ell)}_N\right)\right)
\right)
\end{equation}
converges in distribution to a Gaussian vector with covariance $ (\min(1,c_{ij}))_{1\leq i,j\leq N}$ (with diagonal $1$ by convention), provided the same result holds for GOE. \\

\noindent The same result holds for Hermitian Wigner matrices, assuming it is true in the GUE case, up to a change in the normalization from $\sqrt{\frac{1}{2}\log N}$ to $\sqrt{\log N}$ in (\ref{eqn:logcor}).
\end{theorem}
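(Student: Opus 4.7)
The strategy is to adapt the four-step program behind Theorem \ref{main theorem} to the joint distribution at $\ell$ energies; because each of these steps already operates on the characteristic polynomial viewed as a function of a spectral parameter, tracking $\ell$ energies in parallel is essentially bookkeeping, and the GOE (resp.\ GUE) hypothesis supplies the reference limit law.

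First I would run the Ornstein--Uhlenbeck interpolation $W_t = e^{-t/2}W + (1-e^{-t})^{1/2}G$ with $G$ drawn from the relevant Gaussian ensemble, and consider the observable of \cite{BourgadeExtreme} at each of the energies $E_N^{(i)}$, taken with imaginary part $\eta \asymp N^{-1+\epsilon}$. Each scalar observable satisfies the stochastic advection equation along Dyson Brownian motion, so running the $\ell$ equations jointly yields a vector-valued martingale representation of $(L_N^{W_t}(E_N^{(i)}))_{i=1}^\ell$. Its quadratic variation is expressible through products of Green function entries $G_t(E_N^{(i)} \pm \ii \eta)\,G_t(E_N^{(j)} \pm \ii \eta)$, which by the strong local laws of \cite{CacMalSch2015} concentrate near their semicircle values. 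A direct computation then identifies the limiting covariance, after normalization by $\sqrt{(\log N)/2}$, as $\min(1, c_{ij})$, exactly the log-correlated structure predicted in Remark \ref{rem:multidim2}. Combined with fixed-energy universality \cite{BouErdYauYin2016}, this identifies for $t \geq N^{-1+\epsilon}$ the joint law of $(L_N^{W_t}(E_N^{(i)}))_i$ with that of a pure Gaussian matrix, at which point the hypothesis of the theorem delivers the Gaussian limit. A final Green function comparison in the spirit of \cite{TaoVu2011,ErdYauYin2012Bulk}, applied to smooth multivariate test functions of the $\ell$ log-determinants, transports the conclusion from $W_t$ back to $W = W_0$; only the first two matrix moments need to match because the observable is evaluated on the real axis at fixed energies.

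The main obstacle is the one that already drives Theorem \ref{main theorem}: the observable must be propagated all the way down to the optimal scale $\eta \asymp N^{-1+\epsilon}$, since the contribution to $L_N(E)$ from eigenvalues within distance $\eta$ of $E$ produces the $\sqrt{\log N}$ fluctuation. The additional subtlety here is the handling of pairs $(i,j)$ with $c_{ij} > 1$: both resolvents $G(E_N^{(i)} + \ii \eta)$ and $G(E_N^{(j)} + \ii \eta)$ then sit at the microscopic scale simultaneously and their product is highly singular, so one has to verify that only the logarithmic cancellation reflected by $\min(1, c_{ij})$ survives in the quadratic variation. Once this off-diagonal cancellation is controlled with an $N^{-\epsilon}$-type error, the remainder of the argument is a routine reprise of the one-energy proof.
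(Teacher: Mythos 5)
The paper gives no separate argument for Theorem \ref{multldim}: it is stated to follow by a verbatim repetition of the three-step proof of Theorem \ref{main theorem} (regularization, coupled DBM plus the advection estimate, Green-function comparison), the whole point being that Gaussianity and the covariance $\min(1,c_{ij})$ are \emph{inherited} from the GOE hypothesis rather than computed. Your proposal departs from this in two places where it breaks down. First, the comparison step: you interpolate $W_t=e^{-t/2}W+\sqrt{1-e^{-t}}\,G$ starting from $W$ itself and claim a Lindeberg/Green-function comparison transports the conclusion from $W_t$ back to $W$ because ``only the first two matrix moments need to match.'' This is false. Under the OU flow the third and fourth entry moments of $W_t$ differ from those of $W$ by ${\rm O}(t)$; in the entry-swapping expansion the third-order terms carry a prefactor $N^{-3/2}$ per swap and there are $N^2$ swaps, so a third-moment discrepancy of size ${\rm O}(\tau)={\rm O}(N^{-\epsilon})$ yields an error of order $N^{1/2-\epsilon}$, which diverges. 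This is precisely why the paper never compares $W_\tau$ with $W$ directly: it uses Lemma \ref{moment matching} to build an initial condition $\tilde W_0$ such that $\tilde W_\tau$ matches $W$ to three moments exactly and to fourth moments up to ${\rm O}(\tau)$, and Theorem \ref{4 moment matching theorem} is proved only for such pairs; the two-moment hypothesis in the final statement is bought by this construction, not by the fact that the observable sits at fixed real energies.

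Second, the covariance and the time scale. You propose to extract the limit covariance $\min(1,c_{ij})$ directly from the quadratic variation of a martingale representation, using local laws at $\eta\asymp N^{-1+\epsilon}$. But the $\sqrt{\log N}$ fluctuations of $L_N$ are accumulated over all scales down to $1/N$, where the local-law error $1/(N\eta)$ is of order one, so the local law cannot resolve the microscopic contribution to the covariance; if such a direct computation were available, the conditioning on the GOE case would be superfluous, whereas the theorem is deliberately conditional. The paper's (implicit) argument instead couples the eigenvalue dynamics started from the matched Wigner data and from GOE data through the same Brownian motions, and uses Lemma \ref{extreme} to rewrite the difference of the two regularized log-determinants at time $\tau=N^{-\epsilon}$ as the same difference for the \emph{initial} data evaluated at $z_\tau$ with $\Im z_\tau\asymp\tau$; at that nearly macroscopic scale Lemma \ref{expectation} and Proposition \ref{prop:variance} give mean ${\rm O}(1)$ and variance ${\rm O}(\epsilon\log N)$, negligible after dividing by $\sqrt{\log N}$, so the joint law at the $\ell$ energies is simply that of the GOE. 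Two further consequences: a running time $t\geq N^{-1+\epsilon}$ is not sufficient (it universalizes only scales below $t$, leaving an initial-data contribution with variance comparable to $\log N$; the choice $\tau=N^{-\epsilon}$ with $\epsilon\to0$ at the end is essential), and fixed-energy universality enters only in the regularization step, not to ``identify the joint law'' of the log-determinants, which, as the paper stresses, depend on all eigenvalues. In the paper's scheme your concern about pairs with $c_{ij}>1$ never arises, since no covariance at microscopic separation is ever computed for the Wigner matrix.
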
	 	
	 	
	 	\noindent Theorem \ref{multldim} says $L_N$ converges to a log-correlated field, provided this result holds for the Gaussian ensembles. It therefore suggests that the universal limiting behavior of extrema and convergence to Gaussian multiplicative chaos conjectured for unitary matrices in \cite{FyoHiaKea12} extends to the class of Wigner matrices. 
Towards these conjectures, \cite{ArgBelBou15,ChaMadNaj16,PaqZei17,FyoSim2015,LamPaq16} proved asymptotics on the maximum of characteristic polynomials of circular unitary and invariant ensembles, and \cite{BerWebWon2017,NikSakWeb2018,Web2015} established convergence to the Gaussian multiplicative chaos, for the same models.
We refer to  \cite{Arg16} for a survey on log-correlated fields and their connections with random matrices, branching processes, the Gaussian free field, and analytic number theory.

\subsection{Statement of results: Fluctuations of Individual Eigenvalues. }	
With minor modifications, the proof of Theorem \ref{main theorem} also extends the results of \cite{Gus2005} and \cite{ORo2010} 
which describe the fluctuations of individual eigenvalues in the GUE and GOE cases, respectively. 
By adapting the method of \cite{TaoVu2011}, \cite{ORo2010} proves the following theorem under
the assumption that the first four moments of the matrix entries match those of a standard Gaussian.
In Appendix B, we show that the individual eigenvalue fluctuations of the GOE (GUE) also hold for real (complex) Wigner matrices in the sense of Definition \ref{def wigner}.
In particular, the fluctuations of eigenvalues of Bernoulli matrices are Gaussian in the large dimension limit, which answers a question from \cite{TaoVu2014}. \\

\noindent To state the following theorem, we follow the notation of Gustavsson \cite{Gus2005} and write 
$k(N) \sim N^\theta$ to mean that $k(N) = h(N)N^\theta$ where $h$ is a function such that for 
all $\e > 0$, for large enough $N$,
	\begin{equation}
	\label{gustavsson sim}
	N^{-\e}\leq h(N)\leq N^\e.
	\end{equation}
	In the following, $\gamma_k$ denotes the $k$\textsuperscript{th} quantile of the semicircle law,
			\begin{equation}\label{quantiles}
			 \frac{1}{2\pi}\int_{-2}^{\gamma_k} \sqrt{\left(4 - x^2 \right)_+}{\rm d}x = \frac{k}{N}.
			\end{equation}

\begin{theorem}
\label{fluctuations of individual eigenvalues}
	Let $W$ be a Wigner matrix satisfying (\ref{subgaussian}) with eigenvalues $\lambda_1 < \lambda_2 <
	\dots < \lambda_N$. Consider $\left\{ \lambda_{k_i} \right\}_{i=1}^m$ such that  $0 < k_{i+1} - k_{i} \sim N^{\theta_i},
	0 < \theta_i \leq 1$, and $k_i /N \to a_i \in (0,1)$ as $N \to \infty$.
	Let
	\begin{equation}
	\label{def: X_i}
		X_i = \frac{\lambda_{k_i} - \gamma_{k_i}}{\sqrt{ \frac{4\log N}{\beta \left(4 - \gamma_{k_i}^2 \right) N^2}  }},
		\quad i = 1, \dots, m,
	\end{equation}
	with $\beta =1$ for real Wigner matrices, and $\beta = 2$ for complex Wigner matrices.
	Then as $N \to \infty$,
	\[
		\P\left\{ X_1 \leq \xi_1, \dots, X_m \leq \xi_m \right\} \to \Phi_{\Lambda}\left(\xi_1, \dots, \xi_m \right),
	\]
	where $\Phi_\Lambda$ is the cumulative distribution function for the $m$-dimensional normal distribution with
	covariance matrix $\Lambda_{i,j} = 1 - \max\left\{ \theta_k: i \leq k < j < m \right\}$ if 
	$i < j$, and $\Lambda_{i,i} = 1$.
\end{theorem}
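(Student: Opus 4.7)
The plan is to reduce eigenvalue fluctuations to fluctuations of the counting function and then reuse the machinery developed for Theorem \ref{main theorem}. Let $\mathcal{N}_N(E) = \#\{i : \lambda_i \leq E\}$. Via the identity $\{\lambda_{k_i} \leq x\} = \{\mathcal{N}_N(x) \geq k_i\}$ together with optimal eigenvalue rigidity, a joint Gaussian CLT for $(\mathcal{N}_N(E_1),\dots,\mathcal{N}_N(E_m))$ at energies $E_i$ near $\gamma_{k_i}$, with variance of order $\log N$, translates into the joint Gaussian fluctuations of $\lambda_{k_i} - \gamma_{k_i}$ on the scale $\sqrt{\log N}/(N\rho_{\r{sc}}(\gamma_{k_i}))$ appearing in (\ref{def: X_i}). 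The covariance matrix $\Lambda$ at the eigenvalue level follows from the covariance of the counting functions by the standard chain-rule argument after inverting $\mathcal{N}_N$ at each classical location.

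To obtain the Gaussian fluctuations of $\mathcal{N}_N(E)$ itself, I would exploit the identity $\pi \mathcal{N}_N(E) = -\Im \log\det(W - E - \ii 0^+) + \mathrm{const}$, so that the counting function is the imaginary counterpart of the log-characteristic polynomial $L_N(E)$ treated in Theorem \ref{main theorem}. The observable of \cite{BourgadeExtreme} and the stochastic advection equation it satisfies along Dyson Brownian motion are naturally complex-valued; tracking its imaginary part instead of its real part, and combining with the strong Green's function estimates of \cite{CacMalSch2015}, fixed-energy universality \cite{BouErdYauYin2016}, and the two-moment Green's function comparison \cite{ErdYauYin2012Bulk,TaoVu2011} exactly as in the proof of Theorem \ref{main theorem}, reduces the Wigner case to the Gaussian one, where the required CLT is due to \cite{Gus2005,ORo2010}.

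For the joint statement with $m$ eigenvalues, the argument parallels Theorem \ref{multldim}: one runs Dyson Brownian motion at $m$ energies simultaneously and tracks the full covariance matrix of the relaxed observables, and the log-correlated structure heuristically described in Remark \ref{rem:multidim2} yields correlation $1-\theta$ at separation scale $N^{\theta-1}$. With $k_{i+1} - k_i \sim N^{\theta_i}$, the classical locations $\gamma_{k_i}$ are separated precisely on such scales, which produces the covariance $\Lambda_{i,j} = 1 - \max\{\theta_k : i \leq k < j\}$; the Gustavsson--O'Rourke joint CLT supplies the Gaussian input, exactly as the GOE/GUE case supplies the input for Theorem \ref{multldim}.

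The main technical obstacle is that the exponents $\theta_i$ may be arbitrarily small, so the energies in play can be separated by as little as $N^{-1+\e}$, essentially the eigenvalue spacing. The Green's function comparison and the DBM relaxation step must therefore preserve the off-diagonal covariance structure down to this microscopic scale, which demands the local semicircle law and the homogenization of the dynamics to hold uniformly at spectral scale $\eta$ slightly below $1/N$. A secondary point is to verify that optimal rigidity is precise enough to justify the inversion $\{\mathcal{N}_N(E) \geq k\} \leftrightarrow \{\lambda_k \leq E\}$ with the correct local density factor $\rho_{\r{sc}}(\gamma_{k_i})$ and without degrading the $\sqrt{\log N}$ normalization.
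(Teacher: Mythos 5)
Your proposal follows essentially the same route as the paper's Appendix B: invert the counting function against the classical locations via rigidity (the paper's Lemma \ref{eigenvalue counting to individual fluctuations}), identify the counting function with $\Im\log\det(W-E)$, and prove its CLT (Proposition \ref{eigenvalue counting}) by rerunning the three steps of Theorem \ref{main theorem} — regularization at scale $\eta_0$, the imaginary part of the observable of \cite{BourgadeExtreme} along the coupled DBM, and Green's function comparison — with the Gustavsson--O'Rourke GOE/GUE result as the Gaussian input. One remark: the ``main technical obstacle'' you flag is not actually present, since for any fixed $\theta_i>0$ the energy separations $\sim N^{\theta_i-1}$ are much larger than the regularization scale $\eta_0=e^{(\log N)^{1/4}}/N$, and the off-diagonal covariance is not recomputed by the comparison but inherited wholesale from the Gaussian case, so no local law or homogenization below scale $1/N$ is needed.
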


\noindent The above theorem  has been known to follow from the  homogenization result in \cite{BouErdYauYin2016} (this technique gives a simple expression for the relative {\it individual} positions of coupled eigenvalues from GOE and Wigner matrices) and fluctuations of mesoscopic linear statistics; see \cite{LanSos2018} for a proof of eigenvalue fluctuations for Wigner and invariant ensembles.  However, the technique from  \cite{BouErdYauYin2016} is not enough for Theorem \ref{main theorem}, as the determinant depends on the positions of {\it all} eigenvalues.

	\subsection{Outline of the proof. }\  In this section, we give the main steps of
	the proof of Theorem  \ref{main theorem}. Our outline discusses the real case, but
	the complex case follows the same scheme. \\
	
	\noindent The main conceptual idea of the proof follows the three step strategy of 
	\cite{ErdPecRmSchYau2010,ErdSchYau2011}. With a priori localization of eigenvalues (step one, \cite{ErdYauYin2012Rig, CacMalSch2015}), one can prove that the determinant has universal fluctuations after a adding a small Gaussian noise (this second step relies on a stochastic advection equation from \cite{BourgadeExtreme}). The third step proves by a density argument that the Gaussian noise does not change the distribution of the determinant, thanks to a perturbative moment matching argument as in \cite{TaoVu2011,ErdYauYin2012Bulk}. We include Figure \ref{diagram of implications} below to 
	help summarize
	the argument.\\

	\noindent {\it First step: small regularization.}
	In Section \ref{smoothing section}, 
	with Theorems \ref{schlein} and \ref{fixed energy thm}, we reduce the proof of Theorem \ref{main theorem} to showing the convergence
\begin{equation}\label{eqn:probaconv}
 \frac{\log\left| \det (W+\ii\eta_0)\right| + c_N}{\sqrt{\log N}} \to \mathscr{N}(0, 1)
				\end{equation}
	with  some explicit deterministic $c_N$, and the small regularization parameter
		\begin{equation}
		\label{def:eta0}
			\eta_0 = \frac{e^{ \left(\log N\right)^{ \frac{1}{4}  } }}{N}.
		\end{equation}

\noindent {\it Second step: universality after coupling.} 
Let $M$ be a symmetric matrix which serves as the initial condition for the
	matrix Dyson's Brownian Motion (DBM) given by 
		\begin{equation}
			\label{eq:DBM evolution} 
		\rd M_t = \frac{1}{\sqrt{N}}{\rm d}B^{(t)} - \frac{1}{2}M_t{\rm d}t.
		\end{equation}
Here $B^{(t)}$ is a symmetric $N \times N$ matrix such that $B^{(t)}_{ij}\left(i < j\right)$ and $B_{ii}^{(t)}/\sqrt{2}$ are
independent standard Brownian motions.
	The above matrix DBM induces 
	a collection of independent standard Brownian motions (see \cite{AndGuiZei2010}), $\tilde{B}^{(k)}_t/\sqrt{2}$, $k = 1, \dots, N$ such that the eigenvalues of $M$ satisfy  the system of stochastic differential equations
	\begin{align}
		\label{eqn:DBM}
		{\rm d}x_k(t) &= \frac{{\rm d}\tilde{B}_{t}^{(k)}}{\sqrt{N}} + \left( \frac{1}{N} \sum_{l \neq k}
			\frac{1}{x_k(t) - x_l(t)} - \frac{1}{2} x_k(t)\right){\rm d}t 	
	\end{align}
	with initial condition given by the eigenvalues of $M$.
	It has been known since \cite{McK1969} that the system 
	(\ref{eqn:DBM}) has a unique strong solution.
	With this in mind, we follow \cite{BouErdYauYin2016} and introduce the following
	coupling scheme. First, run the matrix DBM taking $\tilde{W}_0$, a Wigner matrix, as the initial condition. Using
	the induced Brownian motions, run the dynamics given by (\ref{eqn:DBM}) using the eigenvalues 
	$y_1 < y_2 < \dots < y_N$
	of $\tilde{W}_0$ as the initial condition. Call the solution to this system ${\bm y}(\tau)$.
	Using the very same (induced) Brownian motions, run
	the dynamics given by (\ref{eqn:DBM}) again, this time using the eigenvalues of a GOE matrix, ${\bm x}(0)$,
	as the initial condition. Call the solution to this system ${\bm x}(\tau)$. \\

	\noindent Now fix $\epsilon > 0$ and let 
	\begin{equation}
		\label{def:tau}
			\tau = N^{-\epsilon}.
		\end{equation}
	Using Lemma \ref{extreme}, we show that 
		\begin{equation}
		\label{coupling at tau}
		 \frac{\sum_{k = 1}^N \log \left|x_k(\tau) + {\rm i}\eta_0\right| - 
				\sum_{k = 1}^N \log \left|y_k(\tau) + {\rm i}\eta_0\right| }{\sqrt{\log N}}
		\end{equation}
	and
		\begin{equation} \label{smoothing t} \frac{\sum_{k = 1}^N \log \left|x_k(0) + z_\tau\right| - 
				\sum_{k = 1}^N \log \left|y_k(0) + z_\tau\right| }{\sqrt{\log N}}
		\end{equation}
	are very close. Here $z_\tau$ is as in (\ref{def:ztau}) with $z = {\rm i} \eta_0$. 
	The  significance of this is that since
	$z_\tau \sim \ii\tau$, we can use
	Lemma \ref{expectation} and well-known central limit theorems which apply to nearly macroscopic scales 
	to show that (\ref{smoothing t}) has variance of order $\e$. Consequently,
	(\ref{coupling at tau}) is also small, and since ${\bm x}(\tau)$ is distributed as the eigenvalues of a GOE matrix,
	we have proved universality of the regularized determinant after coupling.\\

	\noindent {\it Third step: moment matching}. In Section \ref{conclusion}, we conclude the proof 
	of Theorem \ref{main theorem}. First, we choose
	$\tilde{W}_0$ so that $\tilde{W}_\tau$ and $W$ have entries whose first four moments are
	close, as in \cite{ErdYauYin2012Bulk}. With this approximate moment matching, we use a perturbative argument, as in \cite{TaoVu2012},
	to prove that (\ref{eqn:probaconv})
	holds for $W$ if and only if it holds for $\tilde{W}_\tau$. 	
	But as (\ref{coupling at tau}) is small, this means (\ref{eqn:probaconv})
	holds for $W$ if and only if it holds for a GOE matrix. By (\ref{GOE CLT}), this concludes the proof.

	\begin{wrapfigure}[22]{l}{8cm}
	\vspace{-1.2cm}
	\begin{center}
	\begin{tikzcd}[row sep = 3cm, column sep = 3cm] 
		& W \\
		\tilde{W}_0 \arrow{r} \arrow[rightarrow]{r} [anchor=center,yshift=2ex] 
			{\text{Matrix DBM } \rd B_{ij}} & \tilde{W}_\tau  \arrow[leftrightarrow]{u} [anchor=center,rotate=-90,yshift=2ex]{\text{Moment Matching (3)}}
		\\ [-85pt]
		{\bm y(0)} \arrow{r} [anchor=center,yshift=-2ex]{\text{Eigenvalues DBM } \rd \tilde{B}_k}& {\bm y(\tau)} \\ 
		[-30pt]
		{{\bm x}(0)} \arrow[rightarrow]{r}[anchor=center,yshift=2ex]{ \text{Eigenvalues DBM }\rd \tilde{B}_k}& {{\bm x}(\tau)}
		\arrow{u} [anchor=center,rotate=-90,yshift=2ex]{\text{Coupling (2)}} 	
	\end{tikzcd} 
	\caption{
       		We show (\ref{main}) holds for $\tilde{W}_\tau$ if and only
		if it holds for $W$, and
		we prove that (\ref{main}) holds for
		${\bm x}(\tau)$ if and only if (\ref{main}) holds for $\tilde{W}_\tau$. 
		Since ${\bm x}(\tau)$ is distributed as the eigenvalues of a GOE matrix, it satisfies
		(\ref{GOE CLT}) and we conclude the proof.
		Note that $\log| \det \tilde{W}_\tau | = \sum \log \left|y_k(\tau) \right|$ pathwise because
		$B$ induces $\tilde{B}$. 
    } \label{diagram of implications}
	\end{center}
\end{wrapfigure}
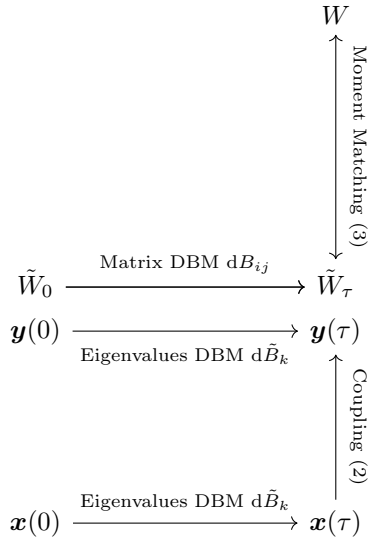

	\subsection{Notation. }
	We shall make frequent use of the notations $s_W$ and $m_{sc}$ in the remainder of this paper. We 
	state their definitions here for easy reference.
	Let $W$ be a Wigner matrix with eigenvalues $\lambda_1 < \lambda_2 < \dots < \lambda_N$.
	For $\Im (z) > 0$, define
		\begin{equation}
		\label{def:stransform}
		s_W(z) = \frac{1}{N} \sum_{k=1}^N \frac{1}{\lambda_k - z},
		\end{equation}
	the Stieltjes transform of $W$. 
	Next, let
		\begin{equation}
		\label{def:msc}
			m_{sc}(z) =  \frac{-z + \sqrt{z^2 -4}}{2},
		\end{equation}
	where the square root $\sqrt{z^2 -4}$ is chosen with the branch cut in $[-2, 2]$ so that $\sqrt{z^2-4} \sim z$ 
	as $z \to \infty$. Note that
		\begin{equation}
			\label{self consistent m}
			m_{sc}(z) + \frac{1}{m_{sc}(z)} + z = 0.
		\end{equation} 

\noindent Finally, throughout this paper, unless indicated otherwise,
	$C$ ($c$) denotes a large (small) constant independent of all other parameters of the problem. 
	It may vary from line to line.

\section{Initial Regularization}\label{smoothing section}

	\noindent Let $y_1 < y_2 < \dots <y_N$ denote 
	the eigenvalues of $W$, a real Wigner matrix
	satisfying (\ref{subgaussian}). 
	We first prove 
	we only need to show Theorem \ref{main theorem} for a slight regularization of the logarithm.

	\begin{proposition}\label{smoothing}
		Set
		\begin{equation*}
		g(\eta) = \sum_{k} \left(\log \left| y_k+\ii\eta\right| - \log \left| y_k\right|\right)-\int_0^{\eta}N\Im 
			\left( m_{sc}(\ii s)\right) \rd s
		\end{equation*}
		and recall
		$ \eta_0 =\frac{e^{ \left(\log N\right)^{ \frac{1}{4}  } }}{N} $
		as in (\ref{def:eta0}).
		Then we have the convergence in probability
	$$\frac{g(\eta_0)}{\sqrt{\log N}}\to 0.$$
	\end{proposition}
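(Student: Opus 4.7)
} First I would rewrite $g(\eta_0)$ as an integral of the centred Stieltjes transform. Differentiating $\log|y_k+\ii s|$ in $s$ and summing over $k$ yields $\sum_k \frac{s}{y_k^2+s^2}=N\Im s_W(\ii s)$, so that
\[
g(\eta_0)=\int_0^{\eta_0}\!\bigl(N\Im s_W(\ii s)-N\Im m_{sc}(\ii s)\bigr)\,\rd s.
\]
I would then split the integration at the microscopic threshold $\eta_\ast := c/N$ for some small constant $c>0$, writing $g(\eta_0)=I_1+I_2$ with $I_1$ the contribution from $[0,\eta_\ast]$ and $I_2$ the contribution from $[\eta_\ast,\eta_0]$. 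The goal is to show $|I_1|=O(1)$ with high probability and $\E\bigl[I_2^2\bigr]=O((\log N)^{1/4})$; together these yield $g(\eta_0)=o_\P(\sqrt{\log N})$ by Chebyshev, which is the stated convergence.

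For $I_2$, I would invoke Theorem~\ref{schlein} together with the CLT-type estimates for linear statistics of Wigner matrices to obtain a covariance bound
\[
\Bigl|\cov\bigl(N\Im s_W(\ii s_1),\,N\Im s_W(\ii s_2)\bigr)\Bigr|\;\lesssim\;\frac{1}{(s_1+s_2)^2}
\]
valid uniformly for $s_1,s_2\in[\eta_\ast,\eta_0]$. Fubini then gives
\[
\E\bigl[I_2^2\bigr]\;\lesssim\; \int_{\eta_\ast}^{\eta_0}\!\!\int_{\eta_\ast}^{\eta_0}\frac{\rd s_1\,\rd s_2}{(s_1+s_2)^2}\;=\;O\bigl(\log(\eta_0/\eta_\ast)\bigr)\;=\;O\bigl((\log N)^{1/4}\bigr),
\]
and Chebyshev supplies convergence in probability on the scale $\sqrt{\log N}$. (A pointwise high-probability bound $|s_W(\ii s)-m_{sc}(\ii s)|\leq C/(Ns)$ from Theorem~\ref{schlein} would give the sharper $|I_2|=O((\log N)^{1/4})$ directly.)

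For $I_1$, the integration window has length $O(1/N)$, which is short enough for rather crude bounds to succeed. The deterministic piece $N\int_0^{\eta_\ast}\Im m_{sc}(\ii s)\,\rd s$ is $O(N\eta_\ast)=O(1)$ using $\Im m_{sc}(\ii s)\leq 1$. For the random piece I would use the elementary inequality $\log(1+x)\leq x$ to bound
\[
\sum_k\tfrac12\log(1+\eta_\ast^2/y_k^2)\;\leq\;\tfrac{\eta_\ast^2}{2}\,\tr(W^{-2}).
\]
A quantitative lower bound $\min_k|y_k|\geq c'/N$ with overwhelming probability (coming from \cite{CacMalSch2015} and embedded in Theorem~\ref{schlein}), combined with bulk rigidity $|y_k-\gamma_k|\leq N^{\epsilon-1}$, ensures $\tr(W^{-2})\leq CN^2$ with high probability, which yields $|I_1|=O(1)$.

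The principal obstacle is the control of $I_1$: a single exceptionally small eigenvalue $|y_k|\ll 1/N$ would make $\tr(W^{-2})$ diverge and break the entire estimate. Standard local semicircle laws are valid only at scales $\eta\geq N^{-1+\epsilon}$ and are therefore too coarse here, so the refined below-$1/N$ Green-function and smallest-singular-value estimates provided by \cite{CacMalSch2015} (accessed through Theorem~\ref{schlein}) are precisely the input that makes the scheme work.
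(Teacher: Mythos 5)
Your reduction of $g(\eta_0)$ to $\int_0^{\eta_0} N\,\Im\left(s_W(\ii s)-m_{sc}(\ii s)\right)\rd s$ and the treatment of the upper range are essentially sound (indeed a first-moment bound via Theorem \ref{schlein}, as in the paper, already gives $\oo(\sqrt{\log N})$ there, so your covariance claim down to scale $1/N$ — which is not a citable input at microscopic scales — is unnecessary; note also that Theorem \ref{schlein} requires $N\eta>M_0$, so the cut cannot sit at $c/N$ with $c$ small). The genuine gap is in $I_1$, and it is exactly at the point you flag as the principal obstacle. Your resolution rests on the claim that $\min_k|y_k|\geq c'/N$ holds \emph{with overwhelming probability}, ``coming from \cite{CacMalSch2015} and embedded in Theorem \ref{schlein}.'' No such statement is available, and as stated it is false: the probability that some eigenvalue lies within $c'/N$ of the origin is bounded below by a constant of order $c'$ (uniformly in $N$), so it cannot be overwhelmingly small for fixed $c'$. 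Moreover Theorem \ref{schlein} is an \emph{upper} bound on the local density of states; it is perfectly consistent with an eigenvalue sitting exactly at $0$ (that event only makes $\Im s_W(\ii\eta)-\Im m_{sc}(\ii\eta)$ of size $\OO(1/(N\eta))$, which the theorem allows). Rigidity is of no help either, since $|y_k-\gamma_k|\leq N^{-1+\epsilon}$ is far larger than the mean spacing and says nothing about the distance of the spectrum to a fixed energy. So the bound $\tr(W^{-2})\leq CN^2$ with high probability, and hence $|I_1|=\OO(1)$, is not justified by the tools you invoke.

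What is actually needed — and what the paper uses — is an anti-concentration/level-repulsion input, which comes from fixed energy universality (Theorem \ref{fixed energy thm}), not from \cite{CacMalSch2015}. The paper cuts at $A_N=e^{-(\log N)^{1/4}}/N=\oo(1/N)$, where Corollary \ref{micro fixed energy} shows the window $[-A_N,A_N]$ contains no eigenvalue with probability tending to $1$ (only tending to $1$, not overwhelming — which is why the cut must be at a scale $\oo(1/N)$ rather than $c/N$); the intermediate regime between $A_N$ and $(\log N)^{1/4}/N$ is then handled not by a deterministic gap but by the expected-counting estimate of Corollary \ref{repulsion} (also a consequence of Theorem \ref{fixed energy thm}), integrated by parts against $\tilde\eta^2/y^2$ and $\log$-type weights, yielding $\oo(\sqrt{\log N})$ in expectation. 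Your scheme could be repaired along these lines (for every $\delta>0$ choose the window small enough that the no-eigenvalue event has probability $\geq 1-\delta$, and control the remaining range in expectation), but that repair requires importing Theorem \ref{fixed energy thm} and its corollaries, i.e.\ precisely the ingredient missing from your proposal.
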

	\noindent To prove Proposition \ref{smoothing}, we will use Theorems \ref{schlein} and \ref{fixed energy thm}
	as input. In \cite{CacMalSch2015}, Theorem \ref{schlein} is stated for complex Wigner matrices, however,
	the argument there proves the same statement for real Wigner matrices.
	\begin{theorem}[Theorem 1 in \cite{CacMalSch2015}]\label{schlein}
		Let $W$ be a Wigner matrix and
		fix $\tilde{\eta} > 0$. 
		For any $\tilde{E} > 0$, there exist constants $M_0, N_0, C, c, c_0 >0$ such that 
					\[ \mathbb{P}\left( \left| \Im \left(s_W\left(E + {\rm i}\eta\right)\right) - 
					\Im \left(m_{sc}\left(E + {\rm i}\eta\right)\right) \right| 
						\geq \frac{K}{N\eta} \right) 
						\leq \frac{\left(Cq\right)^{cq^2}}{K^q} \]
		for all $\eta \leq \tilde{\eta}$, $|E| \leq \tilde{E}$, $K > 0$, $N > N_0$ such that $N\eta > M_0$,
		and
		$q \in \mathbb{N}$ with $q \leq c_0\left(N\eta\right)^{\frac{1}{8}}$.
	\end{theorem}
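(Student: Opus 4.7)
The plan is to prove the bound via a Schur-complement / self-consistent equation scheme combined with high-moment estimates tuned to the subgaussian entries, following the approach of \cite{CacMalSch2015}.

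\emph{Step 1 (Schur complement and linearized stability).} For each $i$, the Schur complement formula gives
$$
G_{ii}(z)^{-1}=-z-W_{ii}-\sum_{k,l\neq i}W_{ik}G^{(i)}_{kl}(z)W_{li},
$$
where $G=(W-z)^{-1}$ and $G^{(i)}$ denotes the resolvent of the minor obtained by deleting row/column $i$. Decomposing the quadratic form as $\tfrac{1}{N}\tr G^{(i)}+Z_i$ with $Z_i$ centred in the entries of row $i$, and combining with the trace-minor identity $\tfrac{1}{N}\tr G^{(i)}=s_W(z)+\OO(1/(N\eta))$, one arrives at
$$
G_{ii}(z)^{-1}+z+s_W(z)=\varepsilon_i(z),\qquad \varepsilon_i:=-W_{ii}-Z_i+\OO(1/(N\eta)).
$$
Averaging $G_{ii}$ over $i$ and comparing with (\ref{self consistent m}), a short algebraic manipulation yields the linearized stability bound $|s_W(z)-m_{sc}(z)|\leq C\max_i|\varepsilon_i(z)|$ on the a priori event $\Lambda:=\{\max_i|G_{ii}-m_{sc}|\leq\delta\}$ for some small fixed $\delta>0$, uniformly for $|E|\leq\tilde E$ and $\eta\leq\tilde\eta$.

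\emph{Step 2 (High-moment control of the error).} The probabilistic core is a Hanson--Wright-type inequality in its high-moment form. Conditioning on $W^{(i)}$, the quantity $Z_i$ is a quadratic form in independent subgaussian variables, and a careful Wick/cumulant expansion gives
$$
\E_i\bigl[|Z_i|^{2q}\bigr]\leq (Cq)^{cq^2}\Bigl(\frac{1}{N^2}\sum_{k,l\neq i}|G^{(i)}_{kl}(z)|^2\Bigr)^q\leq (Cq)^{cq^2}\Bigl(\frac{\Im\tfrac{1}{N}\tr G^{(i)}(z)}{N\eta}\Bigr)^q,
$$
the last step being the Ward identity $\sum_l|G^{(i)}_{kl}|^2=\Im G^{(i)}_{kk}/\eta$. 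The factor $(Cq)^{cq^2}$ reflects the growth of high cumulants of subgaussian laws; it is precisely this growth that limits the admissible range of $q$. The contribution of $W_{ii}$ is handled trivially by subgaussianity, and the trace-minor term is deterministic of the correct size. On the event $\Lambda$, $\Im\tfrac{1}{N}\tr G^{(i)}$ is $\OO(1)$, so Markov's inequality and a union bound over $i$ give
$$
\P\bigl(\max_i|\varepsilon_i(z)|\geq K/(N\eta)\bigr)\leq(Cq)^{cq^2}/K^q,
$$
which combined with Step 1 yields the stated bound for $|\Im s_W(z)-\Im m_{sc}(z)|$.

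\emph{Step 3 (Bootstrap in $\eta$ and edge stability).} It remains to remove the a priori assumption $\Lambda$, achieved by a continuity bootstrap descending in $\eta$. At $\eta=\tilde\eta$ the event $\Lambda$ holds with overwhelming probability by the global semicircle law; using the Lipschitz continuity of $G_{ii}(z)$ in $z$ (with constant $\eta^{-2}\leq N^2$) we discretize $[M_0/N,\tilde\eta]$ at mesh $N^{-10}$ and propagate $\Lambda$ through all scales by the just-proved tail bound. The main obstacle is the stability analysis near the spectral edge $|E|\approx 2$, where $m_{sc}$ approaches a double root of (\ref{self consistent m}) and the linear stability estimate of Step 1 degenerates; retaining the quadratic correction in the fixed-point analysis still closes the bootstrap, but is precisely what forces the restriction $q\leq c_0(N\eta)^{1/8}$ in the statement.
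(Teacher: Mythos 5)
First, note that the paper does not prove this statement at all: Theorem \ref{schlein} is imported verbatim from \cite{CacMalSch2015} (with the remark that the argument there also covers real Wigner matrices), so there is no internal proof to compare against, and your proposal has to stand on its own as a reconstruction of that external argument. As such it contains a genuine quantitative gap at its core. In Step 1 you reduce $|s_W(z)-m_{sc}(z)|$ to $C\max_i|\varepsilon_i(z)|$, but the individual errors do not live at the scale $1/(N\eta)$: conditionally on $W^{(i)}$, $Z_i$ has variance of order $\frac{1}{N^2}\sum_{k,l}|G^{(i)}_{kl}|^2=\frac{\Im\frac{1}{N}\tr G^{(i)}}{N\eta}\sim\frac{1}{N\eta}$, so $|Z_i|$ is typically of size $(N\eta)^{-1/2}\gg(N\eta)^{-1}$, and $\P\bigl(\max_i|\varepsilon_i|\geq K/(N\eta)\bigr)$ is close to $1$ unless $K\gtrsim\sqrt{N\eta}$. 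Concretely, the final display of your Step 2 does not follow from the moment bound you state: Markov applied to $\E_i|Z_i|^{2q}\leq(Cq)^{cq^2}\bigl(\frac{\Im\frac1N\tr G^{(i)}}{N\eta}\bigr)^{q}$ at threshold $K/(N\eta)$ leaves an uncancelled factor $(N\eta)^{q}$, and the union bound over $i$ costs a further factor $N$ that is absent from the target bound. The precision $K/(N\eta)$ is only attainable for the \emph{averaged} quantity $s_W=\frac1N\sum_i G_{ii}$, by exploiting cancellation among the nearly centred, weakly correlated errors (fluctuation averaging, or the iterative moment scheme of \cite{CacMalSch2015} applied directly to the averaged self-consistent equation); this averaging mechanism is the heart of the theorem and is missing from your argument.

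Two secondary misattributions compound this. The factor $(Cq)^{cq^2}$ does not come from "growth of high cumulants of subgaussian laws": a Hanson--Wright-type bound for a quadratic form in subgaussian variables yields $2q$-th moments of size $(Cq)^{Cq}$ times the appropriate norm, and if a single such bound sufficed the tail would be the stronger $(Cq)^{cq}/K^q$. The $q^2$ in the exponent, as well as the restriction $q\leq c_0(N\eta)^{1/8}$, arise from the compounding of constants in the iterative moment/bootstrap scheme of \cite{CacMalSch2015}, not from edge stability as you assert in Step 3. Relatedly, your claimed linear stability bound "uniformly for $|E|\leq\tilde E$" fails near $|E|\approx 2$, where the stability factor $1-m_{sc}^2$ degenerates; the reason the theorem is stated for $\Im(s_W-m_{sc})$ rather than the full difference is precisely that the imaginary part admits a bound valid up to and beyond the edge, and your sketch never engages with that distinction.
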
	
	\begin{remark}
		In  \cite{ErdYauYin2012Rig}, the authors proved that for some positive constant $C_0$, and $N$ large enough,
			\[ \left| s_W\left(E + {\rm i}\eta\right) - m_{sc}\left(E + {\rm i}\eta\right) \right| \leq
				\frac{e^{C_0(\log\log N)^2}}{N\eta} \] 
		holds with high probability. Though this estimate is weaker than the estimate
		of Theorem \ref{schlein}, it holds for a more general model of Wigner matrix in which
		the entries of the matrix need not have identical variances. On the other hand, we require the  
		stronger estimate in Theorem \ref{schlein} in our proof of Proposition
		\ref{smoothing}, and so we restrict ourselves to Wigner matrices as defined
		in Definition \ref{def wigner}.
		The proof of Lemma \ref{expectation} also relies on Definition \ref{def wigner}. 
	\end{remark}
	\begin{theorem}[Theorem 2.2 in \cite{BouErdYauYin2016}]\label{fixed energy thm}
		Let $\rho_1$ denote the first correlation function for the eigenvalues of an $N\times N$ Wigner matrix,
		and let $\rho(x) = \frac{1}{2\pi} \sqrt{\left( 4 - v^2 \right)_+}$. 
		Then for any $F: \mathbb{R} \to \mathbb{R}$ continuous and compactly supported, and for any $\kappa > 0$,
		we have, 
			\begin{equation}\label{FEU} \lim_{N\to\infty} \sup_{E \in [-2 + \kappa, 2 - \kappa]} \left |\frac{1}{\rho(E)} 
			\int F(v)\rho_1\left( E + \frac{v}{N\rho(E)}\right){\rm d}v -
			\int F(v) \rho(v)\, {\rm d}v \right| = 0.\end{equation}
	\end{theorem}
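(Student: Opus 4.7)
The plan is to follow the three-step strategy of Erd\H os-Schlein-Yau, adapted to produce bulk universality at a \emph{single} fixed energy $E$ rather than one averaged over a mesoscopic window. The first step invokes Theorem \ref{schlein}: combined with a Helffer-Sj\"ostrand representation it yields rigidity of the bulk eigenvalues at scale $N^{-1+\epsilon}$, which controls error terms throughout and identifies the classical locations $\gamma_k$ used in the rescaling.

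\medskip

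The second, and conceptually central, step handles universality after a short Dyson Brownian motion time. Run the matrix Ornstein-Uhlenbeck process (\ref{eq:DBM evolution}) starting from $W_0 = W$, and couple it to a second matrix OU started from an independent GOE, driven by the same Brownian motions $\tilde B^{(k)}$. Denote the two eigenvalue processes by $\bm y(t)$ and $\bm x(t)$. Using the observable introduced in \cite{BourgadeExtreme} and its stochastic advection equation, prove a coupling estimate $N \rho(E)\,|x_k(\tau)-y_k(\tau)| = o(1)$ with overwhelming probability, for indices $k$ with $\gamma_k$ in the bulk and time $\tau = N^{-1+\epsilon}$. This forces the microscopic one-point correlation function of $W_\tau$ at $E$ to match that of GOE in the limit; for GOE, Pfaffian formulas combined with the local semicircle scaling furnish the explicit limit, uniformly in $E \in [-2+\kappa,2-\kappa]$ tested against any continuous compactly supported $F$.

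\medskip

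The third step removes the Gaussian divisible component. Choose $W_0$ so that the first four moments of $W_\tau$ agree with those of $W$ up to errors of order $\tau^2$, as in \cite{ErdYauYin2012Bulk}; a Green's function comparison in the style of \cite{TaoVu2011,ErdYauYin2012Bulk} then transfers the microscopic one-point statistics from $W_\tau$ back to $W$, completing the argument.

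\medskip

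The principal obstacle is obtaining the \emph{supremum} over $E$ in (\ref{FEU}), as opposed to an integrated or averaged statement. This requires the coupling estimate of the second step to hold with overwhelming probability \emph{simultaneously} for all $E$ in a fine polynomial net of $[-2+\kappa,2-\kappa]$; the observable from \cite{BourgadeExtreme} is tailored to deliver precisely this kind of multi-energy control, and rigidity yields H\"older continuity in $E$ of the rescaled one-point function at the microscopic scale, allowing one to promote pointwise convergence on the net to uniform convergence on the entire bulk interval.
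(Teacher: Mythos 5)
You should first note that the paper does not prove Theorem \ref{fixed energy thm} at all: it is imported verbatim as Theorem 2.2 of \cite{BouErdYauYin2016}, so the only proof to compare against is the one in that reference, and your sketch does not reproduce it. More importantly, the central step of your outline is not correct. You claim that after running the coupled Dyson Brownian motions for a time $\tau$ one gets $N\rho(E)\,|x_k(\tau)-y_k(\tau)|=o(1)$ with overwhelming probability for bulk indices, i.e.\ that the Wigner and GOE eigenvalues can be coupled to precision $o(1/N)$. This is false for every choice of $\tau$: individual bulk eigenvalues of a Wigner matrix and of an independent GOE matrix fluctuate around the classical locations $\gamma_k$ on the scale $\sqrt{\log N}/N$ (Gustavsson-type fluctuations, exactly the content of Theorem \ref{fluctuations of individual eigenvalues}), and running the coupled flow cannot erase this discrepancy --- the difference $x_k(t)-y_k(t)$ is governed by the initial mesoscopic and global fluctuations of the two spectra and remains much larger than $1/N$. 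If your estimate were available, fixed energy universality would be an immediate corollary; the reason it was a genuinely hard problem, especially in the real symmetric class, is precisely that such a microscopic coupling does not hold.

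What \cite{BouErdYauYin2016} actually establishes is a homogenization statement for the coupled dynamics: $x_k(t)-y_k(t)$ equals, up to errors $o(1/N)$, a heat-kernel (mesoscopic) average of the initial differences, and is therefore essentially \emph{constant in $k$} over microscopic windows. Consequently the two local point processes near $E$ agree only up to a small random translation, and the proof must then use the structure of the Gaussian ensemble at fixed energy (continuity of its rescaled correlation functions in the energy, via its explicit determinantal/Pfaffian description) to show this shift is harmless. None of this mechanism appears in your outline, so the main difficulty of the theorem is left unaddressed. Secondary points: with $\tau=N^{-1+\epsilon}$ the removal of the Gaussian component does not require fourth-moment matching to order $\tau^2$ (and the matching lemma used in this paper, Lemma \ref{moment matching}, gives three exact moments and a fourth moment to order $\OO(\tau)$, designed for $\tau=N^{-\epsilon}$); and the uniformity in $E$ in (\ref{FEU}) comes from the uniformity of all estimates in the bulk rather than from a net-plus-H\"older argument, which is a minor presentational issue compared with the gap above.
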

	\begin{remark}
		In fact Theorem 2.2 in \cite{BouErdYauYin2016} makes a much stronger statement, namely it states the
		analogous convergence for all correlation functions in the case of generalized Wigner matrices.
	\end{remark}
	
	\begin{corollary}\label{repulsion}
		For any small fixed $\kappa,\gamma > 0$ there exists $C,N_0>0$ such that for any $N\geq N_0$ and any interval $I \subset [-2 + \kappa, 2 - \kappa]$ we have
			\[ \mathbb{E}\left( \left| \left\{ y_k: y_k \in I \right\} \right|\right)\leq CN |I| + \gamma. \]
				\end{corollary}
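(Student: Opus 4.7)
The plan is to combine Theorem \ref{fixed energy thm} with a covering argument. Since $\mathbb{E}|\{y_k \in I\}|$ equals by definition $\int_I \rho_1(x)\,dx$, it is enough to establish a uniform local bound on $\int_J \rho_1$ for intervals $J$ of microscopic length $\sim L/N$ and then sum over a cover of $I$ by such intervals.

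The key local estimate I would aim for reads as follows: there is a constant $C_1 = C_1(\kappa)$ and, for every $L > 0$, an $N_0 = N_0(L, \kappa)$ such that for $N \geq N_0$ and every $E \in [-2 + \kappa/2,\, 2 - \kappa/2]$,
\[
\int_{E - L/N}^{E + L/N} \rho_1(x)\,dx \,\leq\, C_1 L.
\]
To prove this, I would fix a nonnegative continuous function $F$ with compact support satisfying $F \geq \chi_{[-L\rho_{\max},\, L\rho_{\max}]}$, where $\rho_{\max} = \sup_{[-2,2]} \rho$, and apply Theorem \ref{fixed energy thm} to this fixed $F$. The change of variables $v = N\rho(E)(x - E)$, together with the pointwise domination $\chi_{[E - L/N,\, E + L/N]}(x) \leq F(N\rho(E)(x - E))$, converts the FEU conclusion into the displayed inequality, with uniformity in $E$ coming from the supremum over $E$ in (\ref{FEU}) and the boundedness of $\rho$ on $[-2,2]$.

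Given $\kappa$ and $\gamma$, choose $L = \gamma/C_1$. If $|I| \leq L/N$, then $I$ is contained in a single interval $[E - L/N, E + L/N]$ with $E$ the midpoint of $I$, and the local bound gives $\mathbb{E}|\{y_k \in I\}| \leq C_1 L \leq \gamma$. If instead $|I| > L/N$, cover $I$ by at most $\lceil N|I|/L \rceil + 1$ intervals of length $2L/N$ centered at points $E_j$; for $N$ large enough that $L/N < \kappa/2$, these centers can be taken in $[-2 + \kappa/2,\, 2 - \kappa/2]$, so summing the local bound yields $\mathbb{E}|\{y_k \in I\}| \leq (N|I|/L + 2)\, C_1 L \leq C N|I|$ with $C = 2 C_1$ (the factor $2$ absorbs the $+2$ in the cover count once $N|I|/L \geq 2$, which holds under the case assumption for $N$ large). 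Combining the two cases produces the desired bound $\mathbb{E}|\{y_k \in I\}| \leq CN|I| + \gamma$.

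The only subtlety worth flagging is uniformity in $E$ of the FEU convergence: once the test function $F$ is fixed (after choosing $L = L(\gamma,\kappa)$), a single $N_0$ works simultaneously for every covering interval, which is precisely what the supremum over $E$ in Theorem \ref{fixed energy thm} supplies. No further analytic input beyond that theorem is required.
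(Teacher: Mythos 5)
Your argument is correct and is essentially the paper's own proof, just written out in detail: both apply Theorem \ref{fixed energy thm} to a fixed continuous test function dominating an indicator on the microscopic scale, use the uniformity in $E$, and then cover $I$ by order-$N|I|$ intervals of length $\sim 1/N$, with the $+\gamma$ coming from the single short window when $|I|\lesssim 1/N$. The only quibble is bookkeeping: the case assumption $|I|>L/N$ gives $N|I|/L>1$, not $\geq 2$, so take $C=3C_1$ (or adjust $L$); since the corollary allows an arbitrary constant $C$, this is immaterial.
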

	\begin{proof}
		In Theorem \ref{fixed energy thm}, choosing $F$ to be an indicator of an interval of length
		$1$ gives an expected value ${\rm O}(1)$.
		Since the statement of Theorem \ref{fixed energy thm} holds uniformly in $E$, 
		we may divide the interval $I$ into sub-intervals of length order $1/N$
		to conclude. 
	\end{proof}

	\begin{corollary}\label{micro fixed energy}
		Let $E\in[-2+\kappa,2-\kappa]$ be fixed and $I_\beta = (E -  \beta/2, E + \beta/2)$ with $\beta=\oo(N^{-1})$. Then
			\[ \lim_{N\to\infty} \mathbb{P}\left( \left| \left\{ y_k \in I_\beta \right\}\right| = 0 \right) = 1. \]
	\end{corollary}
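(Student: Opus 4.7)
The strategy is a very short first-moment (Markov) argument using Corollary \ref{repulsion} as the sole input. The event $\{|\{y_k \in I_\beta\}| \geq 1\}$ is controlled by
\[
\P\pb{|\{y_k \in I_\beta\}| \geq 1} \;\leq\; \E\pb{|\{y_k \in I_\beta\}|},
\]
so it suffices to show that the expected number of eigenvalues in the microscopic window $I_\beta$ vanishes as $N \to \infty$.

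To estimate this expectation, I would fix an arbitrary $\gamma > 0$ and apply Corollary \ref{repulsion} (whose hypotheses are met since $E \in [-2+\kappa, 2-\kappa]$, so for $N$ large enough $I_\beta \subset [-2+\kappa/2, 2-\kappa/2]$, which can be absorbed into the $\kappa$ of the corollary). This produces constants $C = C(\gamma,\kappa)$ and $N_0 = N_0(\gamma,\kappa)$ with
\[
\E\pb{|\{y_k \in I_\beta\}|} \;\leq\; C N \beta + \gamma \qquad \text{for all } N \geq N_0.
\]
Since by hypothesis $\beta = \oo(N^{-1})$, the product $N\beta \to 0$, hence for all sufficiently large $N$ the right-hand side is bounded by $2\gamma$. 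As $\gamma > 0$ is arbitrary, this forces
\[
\lim_{N\to\infty} \E\pb{|\{y_k \in I_\beta\}|} = 0,
\]
and combined with the Markov bound above, this yields $\P(|\{y_k \in I_\beta\}| = 0) \to 1$.

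There is essentially no obstacle here: the corollary has already packaged the fixed energy universality input (Theorem \ref{fixed energy thm}) into exactly the form needed to control expected eigenvalue counts on intervals, and the conclusion for sub-microscopic intervals follows by linearity of expectation. The only minor subtlety is ensuring that $I_\beta$ stays inside the bulk interval $[-2+\kappa,2-\kappa]$ for which Corollary \ref{repulsion} applies; this is automatic for large $N$ since $E$ is fixed in the bulk and $\beta \to 0$.
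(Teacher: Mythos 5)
Your argument is correct: Markov's inequality reduces the claim to showing $\E\left(\left|\left\{ y_k \in I_\beta \right\}\right|\right)\to 0$, and the stated form of Corollary \ref{repulsion} --- with the additive error being an arbitrary fixed $\gamma>0$ rather than an $\OO(1)$ constant --- gives exactly that, since $CN\beta\to 0$ for any fixed $C$; the bulk-containment point you raise is indeed the only hypothesis to check. The paper's proof is also a first-moment bound, but it does not pass through Corollary \ref{repulsion}: it dominates the indicator of $I_\beta$ by a fixed smooth bump $f\left(N(\cdot-E)/\e\right)$ (legitimate for large $N$ precisely because $\beta=\oo(N^{-1})$, so $I_\beta$ sits inside a window of width $\e/N$) and then applies Theorem \ref{fixed energy thm} directly to bound the expectation by $10\e$. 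The two routes carry the same content, but note where the weight rests in yours: everything hinges on the ``$+\gamma$'' term in Corollary \ref{repulsion} --- if that corollary only provided $CN|I|+C$ with a fixed constant $C$, your argument would collapse, since an interval of length $\oo(N^{-1})$ would then only receive an $\OO(1)$ bound on its expected count. That sharper ``$+\gamma$'' form is itself obtained from Theorem \ref{fixed energy thm} by the same microscopic smooth-test-function device that the paper's proof of the present corollary uses directly, so your proof is a legitimate shortcut given Corollary \ref{repulsion} as stated, while the paper's version is self-contained modulo the fixed energy theorem.
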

	\begin{proof}
		Let $\e$ be any fixed small constant.
		Let $f$ be fixed, smooth, positive, equal to $1$ on $[-1,1]$ and $0$ on $[-2,2]^c$.
		Then
		$$
				\mathbb{P}\left( \left| \left\{ y_k \in I_\beta \right\}\right| \geq 1 \right) \leq
				\mathbb{E} \left( \left| \left\{ y_k \in I_\beta \right\}\right| \right) \leq
				\mathbb{E} \left( \sum_k f\left(N(y_k-E)/\e\right) \right) \leq 10\e,
		$$
		where the last bound holds for large enough $N$ by Theorem \ref{fixed energy thm}. 
	\end{proof}
	\begin{proof}[Proof of Proposition \ref{smoothing}]
	We first choose $\tilde{\eta}<\eta_0$ so that we can use Theorem \ref{schlein} to estimate
		\[\mathbb{E}\left( \left| g\left(\eta_0\right) - g\left(\tilde{\eta}\right) \right| \right),\]
	and then take care of the remaining error using Corollaries \ref{repulsion} and \ref{micro fixed energy}.
	Let
		\[ \tilde{\eta} = \frac{d_N}{N}, \ \ {\rm with} \ \ d_N = \left( \log N \right)^{\frac{1}{4}},\]
	and observe that
		\begin{equation}\label{eqn:inter1} \mathbb{E}\left( \left| g\left(\eta_0\right) - g\left(\tilde{\eta}\right) \right| \right)= 
		  \mathbb{E}\left( \left|\int_{\tilde{\eta}}^{\eta_0} N \Im \left( s_W(\ii t) -m_{sc}(\ii t)\right)
		  	{\rm d}t \right| \right)
		  \leq 
		  \int_{\tilde{\eta}}^{\eta_0} \mathbb{E} (N\left|\Im \left( s_{W_1}\left({\rm i}t\right) - 
		  	m_{sc} ({\rm i}t) \right|\right))\, {\rm d}t. \end{equation}
	In estimating the right hand side above, we will use the notation
		\[ \Delta(t) = \left|\Im \left( s_{W_1}({\rm i}t) - 
		  	m_{sc}({\rm i}t \right) \right|. \]
	For $N$ sufficiently large, by Theorem \ref{schlein} with $q =2$, we can write the right hand side of (\ref{eqn:inter1}) as
		\begin{multline}
			\int_{\tilde{\eta}}^{\eta_0} \int_0^\infty \mathbb{P}\left(N  \Delta\left( t \right) > u\right){\rm d}u{\rm d}t 
			= \int_{\tilde{\eta}}^{\eta_0}\left( 
				\int_0^1 \mathbb{P}\left( \Delta\left( t \right) > \frac{K}{Nt}\right)\,\frac{{\rm d}K}{t} +
				\int_{1}^\infty \mathbb{P}\left( \Delta\left( t \right) > \frac{K}{Nt}\right)\frac{{\rm d}K}{t} \right){\rm d}t \\
			\leq \int_{\tilde{\eta}}^{\eta_0} \left( \frac{1}{t} + \int_1^\infty \frac{C}{K^2} \frac{dK}{t} \right){\rm d}t
			\leq\left(1 + C\right)\log\left( \frac{\eta_0}{\tilde{\eta}}\right) 
			={\rm o}\left( \sqrt{\log N} \right).\label{est1}
		\end{multline}
	Next we estimate
	 $\sum_{k} \left(\log \left| y_k + {\rm i}\tilde{\eta}\right| - \log \left| y_k\right|\right)$, and 
	this will give us a bound for $\mathbb{E}\left(\left| g(\tilde{\eta}) \right|\right)$. 
	Taylor expansion yields
		\[ \sum_{|y_k| > \tilde{\eta}}  \left(\log \left| y_k + {\rm i}\tilde{\eta}\right| - \log \left| y_k\right|\right) \leq
		\sum_{|y_k| > \tilde{\eta}} \frac{\tilde{\eta}^2}{y_k^2}.
 \]
	Define
		$N_1(u) = \left| \left\{ y_k : \tilde{\eta} \leq |y_k| \leq u \right\}\right|$.
	Using integration by parts and Corollary \ref{repulsion}, we have
		\begin{equation} \mathbb{E} \left( \sum_{|y_k| > \tilde{\eta}} \frac{\tilde{\eta}^2}{y_k^2} \right) =
			\mathbb{E}\left( \int_{\tilde{\eta}}^\infty \frac{\tilde{\eta}^2}{y^2}\,{\rm d}N_1(y) \right)
			= 2\tilde{\eta}^2 \int_{\tilde{\eta}}^\infty \frac{\mathbb{E}\left(N_1(y)\right)}{y^3}\,{\rm d}y = {\rm O}\left (d_N\right).\label{eqn:est2}
			\end{equation}
	We now estimate $\sum_{|y_k| \leq \tilde{\eta}}  \left(\log \left| y_k + {\rm i}\tilde{\eta}\right| - \log \left| y_k\right|\right)$.
	We consider two cases.
	First, let $A_N = b_N/N$ for some very small $b_N$, for example 
		\[ b_N = e^{-\left(\log N\right)^{\frac{1}{4}}}.\]
	For $u > 0$ we denote
		$ N_2(u) = \left| \left\{ y_k \,:\, A_N < | y_k |\leq u \right\} \right|$. Then again using integration by parts and Corollary \ref{repulsion}  we obtain
		\begin{multline*}
			\mathbb{E}\left(
			\sum_{A_N <| y_k| < \tilde{\eta}}  \left(\log \left| y_k + {\rm i}\tilde{\eta}\right| - \log \left| y_k\right|\right)
			\right)
			= \mathbb{E}\left( \int_{A_N}^{\tilde{\eta}} 
				\left( \log \left| y + {\rm i}\tilde{\eta}\right| - \log \left|y\right|\right){\rm d}N_2(y)  \right)\\
				\leq \log\left(\sqrt{2}\right) \mathbb{E}\left( N_2\left( \tilde{\eta} \right) \right) + 
				\int_{A_N}^{\tilde{\eta}} \frac{\mathbb{E}\left(N_2(y)\right)}{y}{\rm d}y
				= {\rm O}\left(d_N + d_N\log\left( \frac{d_N}{b_N} \right)\right) = \oo\left( \sqrt{\log N}\right).
		\end{multline*}
It remains
	to estimate $\sum_{|y_k| < A_N}\left( \log\left| y_k + {\rm i}\tilde{\eta}\right| - \log\left|y_k\right|\right)$. 
	By Corollary \ref{micro fixed energy}, we have
		\begin{equation}\label{est3}
		 \mathbb{P}\left(\sum_{|y_k| < A_N}\left( \log\left| y_k + {\rm i}\tilde{\eta}\right| - \log\left|y_k\right| \right) = 0\right) 
		 	\geq \mathbb{P}\left( \left|\left\{ y_k \in [-A_N,A_N]\right\} \right|= 0 \right) \to 1.
		\end{equation}
	\noindent 
	The estimates (\ref{est1}) and (\ref{eqn:est2}) along with Markov's inequality, and the bound (\ref{est3}),
	conclude the proof.
	\end{proof}
\section{Coupling of Determinants}\label{DBM}
	
	\noindent In this section, we use the coupled Dyson Brownian Motion introduced in \cite{BouErdYauYin2016} to
	compare (\ref{smoothing t}) and (\ref{coupling at tau}). 
	Define $\tilde{W}_\tau$ by running the matrix Dyson Brownian Motion
	(\ref{eq:DBM evolution}) with initial condition $\tilde{W}_0$
	where $\tilde{W}_0$ is a Wigner matrix with eigenvalues ${\bm y}$.
	Recall that this induces a collection of Brownian motions
	$\tilde{B}^{(k)}_t$ 
	so that the system (\ref{eqn:DBM}) with initial condition ${\bm y}$
	has a (unique strong) solution 
	${\bm y}(\cdot)$, and ${\bm y}(\tau)$ are the eigenvalues of $\tilde{W}_\tau$. Using the same (induced) Brownian motions as we used to define ${\bm y}(\tau)$, define
	${\bm x}(\tau)$ by running the dynamics (\ref{eqn:DBM}) with initial condition given by the eigenvalues of a GOE matrix.
	Using the result of Section \ref{smoothing section} as an input to Lemma \ref{extreme}, we now prove Proposition
	\ref{prop:advection} which says that (\ref{coupling at tau}) and (\ref{smoothing t})  are asymptotically equal in law. \\

	\noindent To study the coupled dynamics of ${\bm x}(t)$ and ${\bm y}(t)$, we
	follow \cite{LanSosYau2016, BourgadeExtreme}. For $\nu \in [0,1]$, let
		\begin{equation}
		\label{def:initialcondition}
			\lambda^\nu_k(0) = \nu x_k + \left(1 - \nu\right)y_k
		\end{equation}
	where ${\bm x}$ is the spectrum of a GOE matrix, and ${\bm y}$ is the spectrum of $\tilde{W}_0$.
	With this initial condition, we denote the (unique strong) solution to (\ref{eqn:DBM}) by ${\bm \lambda}^{(\nu)}(t)$.
	Note that
$
			{\bm \lambda}^{(0)}(\tau) = {\bm y}(\tau)$ and $
			{\bm \lambda}^{(1)}(\tau) = {\bm x}(\tau).
$ 
Let
		\begin{equation}
		\label{eqn:ft}
		f^{(\nu)}_t(z) =e^{-\frac{t}{2}} \sum_{k=1}^N \frac{u_k(t)}{\lambda^{(\nu)}_k(t)-z}, 
		\quad  u_k(t) = \frac{\rd}{\rd\nu}\lambda^{(\nu)}_k(t),
		\end{equation}
	(see \cite{LanSosYau2016} for existence of this derivative) and observe that
		\begin{equation}
		\label{main obs}
		\frac{\rd}{\rd\nu} \sum_k \log\left| \lambda_k^{(\nu)}(t) - z \right| = e^{\frac{t}{2}}\Re  \left(f_t(z)\right).
		\end{equation}
	Lemma \ref{extreme} below from \cite[Proposition 3.3]{BourgadeExtreme}, tells us that we may estimate
	$f_\tau(z)$ by $f_0\left(z_\tau\right)$, with $z_\tau$ as in (\ref{def:ztau}) and $\tau$ as in (\ref{def:tau}). 
	
	\begin{lemma}
		\label{extreme}
		There exists $C_0>0$ such that with $\varphi=e^{C_0(\log\log N)^2}$, 
		for any $\nu\in[0,1]$, $\kappa>0$ (small) and $D>0$ (large),  
		 there exists $N_0(\kappa,D)$ so that for any $N\geq N_0$ we have
			\[ \mathbb{P}\left(\left| f^{(\nu)}_t(z) - f^{(\nu)}_0\left(z_t\right) \right| <  \frac{\varphi}{N\eta}\ {\rm for\ all}\ 0<t<1\ {\rm and}\ z=E+\ii \eta, \frac{\varphi}{N}<\eta<1,|E|<2-\kappa\right)
				\geq 1-N^{-D}.\]
		In the above, $z_t$ is given by
			\begin{equation}
			\label{def:ztau} z_t = \frac{1}{2}\left( e^{\frac{t}{2}}\left(z + \sqrt{z^2 - 4}\right) + 
				e^{-\frac{t}{2}}\left(z - \sqrt{z^2 - 4}\right) \right).
			\end{equation}
	\end{lemma}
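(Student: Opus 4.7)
The strategy is to derive an approximate stochastic advection equation for $f_t^{(\nu)}(z)$ whose deterministic characteristics are the curves (\ref{def:ztau}); a direct computation using $m_{sc}^2 + zm_{sc} + 1 = 0$ verifies that $\dot z_t = m_{sc}(z_t) + z_t/2$ with $z_0 = z$, which is precisely the backwards characteristic for the drift $(s_W + z/2)\partial_z$ after $s_W$ is replaced by its deterministic approximation $m_{sc}$. The first step is to compute the dynamics of $u_k(t) = \partial_\nu \lambda_k^{(\nu)}(t)$: since the Brownian motions driving (\ref{eqn:DBM}) do not depend on $\nu$ and the initial condition (\ref{def:initialcondition}) is affine in $\nu$, the derivative exists pathwise and solves the random-coefficient linear ODE
\begin{equation*}
\dot u_k = -\frac{1}{N}\sum_{l \neq k}\frac{u_k - u_l}{(\lambda_k - \lambda_l)^2} - \frac{u_k}{2}, \qquad u_k(0) = x_k - y_k.
\end{equation*}
Multiplying by $u_k$, summing and symmetrizing in $k \leftrightarrow l$ gives $\frac{d}{dt}\sum_k u_k^2 \leq -\sum_k u_k^2$; combined with the rigidity bound $\sum_k (x_k-y_k)^2 = \oo(1)$ this yields $\max_k |u_k(t)| = \oo(1)$ with overwhelming probability, which is essential for bounding both the observable and the martingale variance.

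Next I would compute $df_t^{(\nu)}(z)$ at fixed $z = E + \ii\eta$ via It\^o's formula, assembling contributions from the $e^{-t/2}$ prefactor, the ODE for $u_k$, the drift and noise of $d\lambda_k$, and the $d\langle\lambda_k\rangle = dt/N$ correction. The crucial algebraic step is to collapse the double sums
\begin{equation*}
\sum_{k\neq l}\frac{u_k - u_l}{(\lambda_k - z)(\lambda_k - \lambda_l)^2}, \qquad \sum_{k \neq l}\frac{u_k}{(\lambda_k - z)^2(\lambda_k - \lambda_l)}
\end{equation*}
into $-N s_W^{(t)}(z)\partial_z g_t(z) + \tfrac12 \partial_z^2 g_t(z)$ (for $g_t = e^{t/2}f_t$), via $k \leftrightarrow l$ symmetrization and the partial fraction identity
\begin{equation*}
\frac{1}{(\lambda_k - \lambda_l)(\lambda_l - z)} = \frac{1}{\lambda_k - z}\left[\frac{1}{\lambda_k - \lambda_l} + \frac{1}{\lambda_l - z}\right].
\end{equation*}
The resulting $\tfrac12 \partial_z^2 g_t$ contribution exactly cancels the It\^o second-order correction, and one arrives cleanly at $dg_t(z) = (s_W^{(t)}(z) + z/2)\partial_z g_t(z)\, dt + dM_t(z)$, with martingale $dM_t = -\sum_k u_k(t)(\lambda_k(t)-z)^{-2} d\tilde B_k^{(t)}/\sqrt N$. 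Composing with the characteristic flow (\ref{def:ztau}) yields
\begin{equation*}
d\bigl[g_t(z_t)\bigr] = \bigl(s_W^{(t)}(z_t) - m_{sc}(z_t)\bigr)\partial_z g_t(z_t)\, dt + dM_t(z_t),
\end{equation*}
from which the corresponding estimate on $f_t$ follows via $f_t = e^{-t/2} g_t$ and the a priori control on $g_0(z_t)$.

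Finally, Theorem \ref{schlein} applied along the characteristic bounds $|s_W^{(t)}(z_t) - m_{sc}(z_t)| \leq \varphi/(N\Im z_t)$ with probability $\geq 1 - N^{-D}$; since $\Im z_t$ is nondecreasing in $t$ on the bulk $|E|<2-\kappa$ (because $\Im m_{sc}(z_t) > 0$ there), the time integral of this error contributes $O(\varphi/(N\eta))$ to $|g_t(z_t) - g_0(z)|$. The martingale has quadratic variation $\int_0^t \frac{1}{N}\sum_k u_k^2/|\lambda_k(s)-z_s|^4\, ds$, bounded by $O(\varphi^2/(N\eta)^2)$ using the local law together with $|u_k| = \oo(1)$; Burkholder-Davis-Gundy then yields a pointwise bound of size $\varphi/(N\eta)$ with the required failure probability. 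A standard dyadic grid in $(t, E, \eta)$, together with the holomorphic dependence of $f_t$ on $z$ and equicontinuity in $(t,\nu)$, upgrades this to the uniform statement. The main obstacle I anticipate is the It\^o algebra of the second step, where the advection structure emerges only via several precise cancellations between terms of apparently different nature; a secondary subtlety is that the local-law bound and the concentration for $M_t$ must remain valid uniformly down to the critical scale $\eta = \varphi/N$ where Theorem \ref{schlein} saturates, which forces the calibration $\varphi = e^{C_0(\log\log N)^2}$.
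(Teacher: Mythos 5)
You should first note that the paper itself does not prove Lemma \ref{extreme}: it is imported verbatim from \cite[Proposition 3.3]{BourgadeExtreme}, so there is no internal proof to compare against. Your sketch follows the same architecture as that reference — the tangent dynamics $\dot u_k=-\frac1N\sum_{l\neq k}\frac{u_k-u_l}{(\lambda_k-\lambda_l)^2}-\frac{u_k}{2}$, the (approximate) stochastic advection equation for the observable, the characteristics (\ref{def:ztau}) (your verification that $\dot z_t=m_{sc}(z_t)+z_t/2$ is correct), and a grid/continuity argument — so the conceptual route is the right one. Two points, however, are genuine gaps rather than routine details. First, the ``exact cancellation'' of the It\^o correction against the $\frac{1}{2N}\partial_z^2$ term produced by symmetrization holds only for the Hermitian ($\beta=2$) flow; the coupling in this paper is the symmetric $\beta=1$ dynamics (\ref{eqn:DBM}), where $\rd\langle\lambda_k\rangle=2\,\rd t/N$, so a term $\frac{1}{2N}\partial_z^2 g_t\,\rd t$ survives. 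It can be absorbed, but it contributes exactly at the critical order $\varphi/(N\eta)$ and only after one has index-resolved control of the $u_k$, so it cannot be dismissed.

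Second, and more seriously, the quantitative estimates you state do not close at the required sub-polynomial accuracy. The inputs you invoke — $\sum_k(x_k-y_k)^2=\oo(1)$, hence $\max_k|u_k(t)|=\oo(1)$, plus ``the local law'' — are far too weak: even with the $\ell^\infty$ maximum principle and strong rigidity, $\max_k|u_k(t)|\lesssim \varphi N^{-2/3}$ (the maximum sits at the edge), and then the quadratic variation of your martingale is only $\lesssim \frac1N\int_0^t\max_k u_k^2\sum_k|\lambda_k-z_s|^{-4}\rd s\lesssim \varphi^2N^{-4/3}\eta^{-2}$, i.e.\ the martingale is of size $N^{1/3+\oo(1)}/(N\eta)$, off by a polynomial factor from the claimed $\OO(\varphi/(N\eta))$. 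Likewise you never bound $\partial_z g_t(z_t)$, which multiplies the drift error $(s_t-m_{sc})$ and is itself only $\OO(\varphi/\eta_s)$ after one knows $|u_k(t)|\lesssim\varphi/N$ for bulk indices uniformly in $t\leq 1$. This index-resolved, time-uniform control of $u_k(t)$ (propagated through the kernel of the linearized flow, with the $(\log N)^{C\log\log N}$-rigidity of \cite{ErdYauYin2012Rig} as input, or obtained through a self-consistent Gronwall argument in terms of the observable itself) is precisely the technical core of the cited proposition and is absent from your sketch. A related calibration issue: Theorem \ref{schlein} cannot give the event $|s-m_{sc}|\leq\varphi/(N\eta)$ with probability $1-N^{-D}$ for $\varphi=e^{C_0(\log\log N)^2}$ (optimizing over $q$ only yields failure probability $e^{-c(\log\log N)^3}$); the $1-N^{-D}$ statement at this accuracy has to come from the strong local law/rigidity of \cite{ErdYauYin2012Rig}, which is exactly why $\varphi$ has the form it does. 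So while the strategy matches the source, the estimates as written would fail, and the missing ingredient is the fine control of the $u_k$'s, not the It\^o algebra.
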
	
	\noindent For $z = {\rm i}\eta_0$, we have
		\begin{equation}
		\label{zt expansion}
		 z_t = {\rm i} \left( \eta_0 + \frac{t \sqrt{\eta_0^2 + 4}}{2} \right) + {\rm O}\left (t^2\right),
		\end{equation}
	and $\eta_0$ is large enough to make use of Lemma \ref{extreme}.
	Therefore,
	integrating both sides of (\ref{main obs}), we have by Lemma \ref{extreme} that with overwhelming probability,
		\begin{multline*}
			\sum_k \left( \log \left|x_k(\tau) + {\rm i}\eta_0\right| - \log \left|y_k(\tau) + {\rm i}\eta_0\right|  \right) = 
			\int_0^1 \frac{\rd}{\rd\nu} \sum_k \log\left| \lambda_k^{(\nu)}(\tau) - z \right| {\rm d}\nu \\
			= e^{\frac{t}{2}} \Re  \int_0^1 f^{(\nu)}_\tau(z) {\rm d}\nu 
			= e^{\frac{t}{2}}\Re  \int_0^1 
			\left( f_0^{(\nu)}\left(z_\tau\right) + {\rm O}\left ( \frac{\varphi}{N\eta_0}\right)\right) {\rm d}\nu 
			= e^{\frac{t}{2}}\Re  \int_0^1 f_0^{(\nu)}\left(z_\tau\right){\rm d}\nu + {\rm o}(1).
		\end{multline*}
		More precisely, the above estimates hold with probability $1-N^{-D}$ for large enough $N$, with rigorous justification by Markov's inequality based on the large moments 
		$\E((\int_0^1 \left( f_\tau^{(\nu)}\left(z_0\right)- f_0^{(\nu)}\left(z_\tau\right)  \right) {\rm d}\nu)^{2p})$, which are bounded by Lemma \ref{extreme}.
	As a consequence, we have proved the following proposition.
	\begin{proposition}
	\label{prop:advection}
	Let $\epsilon >0$, $\tau = N^{-\epsilon}$ and let $z_\tau$ be as in (\ref{def:ztau}) with $z = {\rm i}\eta_0$.
	Then for any $\delta > 0$, 
		\[ \lim_{N\to\infty} \mathbb{P}\left(\left|
		\sum_k \left( \log \left|x_k(\tau) + {\rm i}\eta_0\right| - \log \left|y_k(\tau) + {\rm i}\eta_0\right|  \right) 
		 - \sum_k \left( \log \left|x_k(0) + z_\tau\right| - \log \left|y_k(0) + z_\tau\right|  \right)\right| > \delta  \right) = 0.  \]
	\end{proposition}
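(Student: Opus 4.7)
The plan is to combine the interpolation from (\ref{def:initialcondition}) with the characteristics identity (\ref{main obs}) and the advection estimate of Lemma \ref{extreme}, exactly the scheme introduced in \cite{BourgadeExtreme} and sketched in the paragraphs preceding the proposition.

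First I would express both sums in the statement as integrals over the coupling parameter $\nu$. Since $\bm{\lambda}^{(0)}(\tau)=\bm{y}(\tau)$ and $\bm{\lambda}^{(1)}(\tau)=\bm{x}(\tau)$, the fundamental theorem of calculus combined with (\ref{main obs}) yields
$$
\sum_k\bigl(\log|x_k(\tau)+\ii\eta_0|-\log|y_k(\tau)+\ii\eta_0|\bigr)=e^{\tau/2}\,\Re\int_0^1 f_\tau^{(\nu)}(\ii\eta_0)\,\dd\nu,
$$
and the analogous identity at $t=0$ with $z_\tau$ in place of $\ii\eta_0$ (and no prefactor). Subtracting, the quantity whose smallness must be established decomposes cleanly as
$$
\Re\int_0^1\bigl(f_\tau^{(\nu)}(\ii\eta_0)-f_0^{(\nu)}(z_\tau)\bigr)\,\dd\nu+(e^{\tau/2}-1)\,\Re\int_0^1 f_\tau^{(\nu)}(\ii\eta_0)\,\dd\nu.
$$

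For the first integrand, Lemma \ref{extreme} provides, on a single event of probability $\ge 1-N^{-D}$ (joint over $t\in(0,1)$ and the spectral parameter), the pointwise bound $|f_\tau^{(\nu)}(\ii\eta_0)-f_0^{(\nu)}(z_\tau)|\le \varphi/(N\eta_0)$. With the choice $\eta_0=e^{(\log N)^{1/4}}/N$ and $\varphi=e^{C_0(\log\log N)^2}$, this quantity equals $\exp\bigl(C_0(\log\log N)^2-(\log N)^{1/4}\bigr)=o(1)$. For the second term, $e^{\tau/2}-1=O(N^{-\epsilon})$, and the integral it multiplies differs from $\sum_k\bigl(\log|x_k(0)+z_\tau|-\log|y_k(0)+z_\tau|\bigr)$ by the $o(1)$ just obtained. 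Since $\Im z_\tau\sim\tau$ by the expansion (\ref{zt expansion}), this latter sum is a mesoscopic linear statistic and is $O(\sqrt{\log N})$ in probability by standard CLTs, so the product is $O(N^{-\epsilon}\sqrt{\log N})=o(1)$.

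The main obstacle is converting the per-$\nu$ bound from Lemma \ref{extreme} into a single high-probability bound for the integral in $\nu$, since a naive union bound does not suffice on the continuum. I would resolve this by controlling even moments $\E\bigl[\bigl|\int_0^1(f_\tau^{(\nu)}(\ii\eta_0)-f_0^{(\nu)}(z_\tau))\,\dd\nu\bigr|^{2p}\bigr]$: applying Lemma \ref{extreme} together with the crude deterministic bound $|f_\tau^{(\nu)}(\ii\eta_0)|\lesssim N/\eta_0$ on the complementary bad event of probability $N^{-D}$, one obtains moments of the required size, and Markov's inequality with $D$ chosen large enough for the given $p$ then promotes the estimate to convergence in probability. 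This is exactly the Markov-moment justification alluded to in the paragraph immediately preceding the proposition.
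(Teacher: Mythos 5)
Your proposal follows essentially the same route as the paper: the $\nu$-interpolation (\ref{def:initialcondition}), the identity (\ref{main obs}), Lemma \ref{extreme} to replace $f_\tau^{(\nu)}(\ii\eta_0)$ by $f_0^{(\nu)}(z_\tau)$ with error $\varphi/(N\eta_0)=\oo(1)$, and the moment--Markov argument to upgrade the fixed-$\nu$ bound to control of the $\nu$-integral, which is exactly the justification the paper sketches. Your explicit treatment of the $e^{\tau/2}-1$ prefactor (bounded by $N^{-\epsilon}$ times an $\OO_P(\sqrt{\log N})$ linear statistic at scale $\Im z_\tau\sim\tau$) is a point the paper leaves implicit, and it is handled correctly.
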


\section{Conclusion of the Proof}\label{conclusion}

	\noindent We will conclude the proof of Theorem \ref{main theorem} in the real symmetric case
	in two steps. The first step is to prove a Green's function comparison
	theorem, and the second is to establish Theorem \ref{main theorem} assuming Lemma
	\ref{expectation}, proved  in the Appendix. 

	\subsection{Green's Function Comparison Theorem. }\label{moment matching section}
	In this section, we first use Lemma \ref{moment matching} to 
	choose a $\tilde{W}_0$ so that $\tilde{W}_{\tau}$ given by
	(\ref{eq:DBM evolution})
	and initial condition $\tilde{W}_0$,
	matches $W$ closely up to fourth moment. We will then prove Theorem \ref{4 moment matching theorem},
	which by the result of Section \ref{smoothing section}, says that
	$ \log | \det \tilde{W}_\tau |$ and 
	$ \log \left| \det W \right|$ have the same law as $N\to\infty$.  
	
	\begin{lemma}[Lemma 6.5 in \cite{ErdYauYin2012Bulk}]
		\label{moment matching}
		Let $m_3$ and $m_4$ be two real numbers such that
			\begin{equation}\label{moment condition}
			 m_4 - m_3^2 - 1 \geq 0, \quad m_4 \leq C_2 
			\end{equation}
		for some positive constant $C_2$. Let $\xi^G$ be a Gaussian random variable with mean 
		$0$ and variance $1$. Then for any sufficiently small $\gamma > 0$ (depending on $C_2$),
		there exists a real random variable $\xi$, with subgaussian decay
		and independent of $\xi^G$ such that the first four moments of
			\[ \xi' = \left( 1 - \gamma\right)^{\frac{1}{2}} \xi_\gamma + \gamma^{\frac{1}{2}}\xi^G\]
		are $m_1\left(\xi'\right) = 0$, $m_2\left(\xi'\right) = 1$, $m_3\left(\xi'\right) = m_3$,
		and 
			\[ \left|m_4\left(\xi'\right) - m_4\right| \leq C\gamma \]
		for some $C$ depending on $C_2$. 
	\end{lemma}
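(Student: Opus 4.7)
The strategy is to reverse-engineer the required moments of $\xi_\gamma$ from the target moments of $\xi'$, verify that the resulting four-moment sequence is realizable, and finally construct $\xi_\gamma$ as a compactly supported atomic distribution so that subgaussianity is automatic.

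First I would compute, using independence of $\xi_\gamma$ and $\xi^G$ together with $\mathbb{E}[\xi^G]=\mathbb{E}[(\xi^G)^3]=0$, $\mathbb{E}[(\xi^G)^2]=1$, $\mathbb{E}[(\xi^G)^4]=3$, the binomial expansions
\begin{align*}
m_1(\xi') &= \sqrt{1-\gamma}\,\mu_1, \\
m_2(\xi') &= (1-\gamma)\mu_2 + \gamma, \\
m_3(\xi') &= (1-\gamma)^{3/2}\mu_3 + 3\sqrt{1-\gamma}\,\gamma\,\mu_1, \\
m_4(\xi') &= (1-\gamma)^2\mu_4 + 6\gamma(1-\gamma)\mu_2 + 3\gamma^2,
\end{align*}
where $\mu_k = \mathbb{E}[\xi_\gamma^k]$. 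Setting $\mu_1 = 0$, $\mu_2 = 1$ and $\mu_3 = (1-\gamma)^{-3/2}m_3$ forces $m_k(\xi') = 0, 1, m_3$ for $k = 1, 2, 3$ exactly; the fourth moment then reduces to $m_4(\xi') = (1-\gamma)^2\mu_4 + 6\gamma(1-\gamma) + 3\gamma^2$, leaving $\mu_4$ as a free parameter.

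Next I would invoke the truncated Hamburger moment problem: a probability measure on $\mathbb{R}$ with moments $(1, 0, 1, \mu_3, \mu_4)$ exists if and only if the Hankel matrix $(\mu_{i+j})_{0\le i,j\le 2}$ is positive semidefinite, which by direct expansion of the determinant reduces to the single inequality $\mu_4 - \mu_3^2 - 1 \ge 0$. Taking $\mu_4 = m_4 + K\gamma$ for a constant $K = K(C_2)$ to be chosen, and using $m_3^2 \le m_4 \le C_2$ together with $(1-\gamma)^{-3} - 1 = O(\gamma)$, one computes
$$ \mu_4 - \mu_3^2 - 1 = (m_4 - m_3^2 - 1) + K\gamma - m_3^2\bigl((1-\gamma)^{-3}-1\bigr) \ge \tfrac{K}{2}\gamma > 0 $$
for all sufficiently small $\gamma$ once $K$ is chosen large enough, invoking hypothesis (\ref{moment condition}). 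This yields strict feasibility and simultaneously the desired bound $|m_4(\xi') - m_4| = O(\gamma)$.

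Finally, I would realize $\xi_\gamma$ as the (at most) three-atom distribution furnished by the Gauss quadrature associated with the strictly positive-definite Hankel matrix. The atoms lie in a compact interval whose length is controlled by $|\mu_3|$ and $|\mu_4|$, and hence by $C_2$ alone; compact support then automatically implies the subgaussian decay (\ref{subgaussian}) for any $\delta_0 > 0$. The main subtlety is ensuring that this compact-support bound is uniform in the admissible $(m_3, m_4)$ and in $\gamma$, but since the target moments $(\mu_3, \mu_4)$ vary in a fixed compact set determined by $C_2$ as $\gamma \to 0$, the atoms of the Gauss quadrature remain in a bounded interval depending only on $C_2$, and all uniform estimates claimed in the lemma follow.
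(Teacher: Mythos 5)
The paper itself does not prove this lemma: it is imported verbatim as Lemma 6.5 of \cite{ErdYauYin2012Bulk}, so there is no in-paper argument to compare against. Your self-contained proof follows what is essentially the standard route and its main steps check out: the moment bookkeeping for $\xi'=\sqrt{1-\gamma}\,\xi_\gamma+\sqrt{\gamma}\,\xi^G$ is correct; the choices $\mu_1=0$, $\mu_2=1$, $\mu_3=(1-\gamma)^{-3/2}m_3$, $\mu_4=m_4+K\gamma$ give the first three moments of $\xi'$ exactly and $|m_4(\xi')-m_4|\leq C(C_2)\gamma$; the Hankel determinant is indeed $\mu_4-\mu_3^2-1$, and since (\ref{moment condition}) gives $m_3^2\leq m_4-1\leq C_2$ while $(1-\gamma)^{-3}-1={\rm O}(\gamma)$, choosing $K=K(C_2)$ large makes it at least $\tfrac{K}{2}\gamma>0$ for small $\gamma$; strict positive definiteness then yields a representing law with at most three atoms, which is compactly supported and hence satisfies (\ref{subgaussian}), and independence of $\xi^G$ is by construction.

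Two caveats. First, the truncated Hamburger problem is not solvable ``if and only if'' the Hankel matrix is positive semidefinite: in the singular case one needs an additional rank (recursiveness) condition. What is true, and is all you actually use, is that strict positive definiteness suffices and produces a finitely atomic representing measure, so this is only a misstatement, not a gap. Second, your closing uniformity claim is the genuinely sketchy point: a three-node ``Gauss quadrature'' is not canonically determined by moments up to order four (one must choose a fifth moment and use a quasi-orthogonal polynomial), and continuity or boundedness of its nodes is exactly what is in doubt as the Hankel determinant degenerates --- which is the relevant regime here, since for Bernoulli-type entries $m_4-m_3^2-1=0$ and your determinant is of order $\gamma$. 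For the lemma as literally stated this does not matter (any finitely atomic law is subgaussian for every $\delta_0$), but if one wants the subgaussian bound uniform in $\gamma$, i.e.\ in $N$, as the application to $\tilde W_0$ implicitly requires, you should replace the compactness appeal by an explicit construction: for instance, mix the two-point law with atoms $a,-1/a$, $a-1/a=\mu_3/(1-t)$ (which has fourth moment $(\mu_3/(1-t))^2+1$) with the symmetric law $\frac{1}{2b^2}\delta_b+\frac{1}{2b^2}\delta_{-b}+(1-b^{-2})\delta_0$, choosing $t$ proportional to $(\mu_4-\mu_3^2-1)/(\mu_3^2+1)$ capped at $\tfrac12$; then $b^2$ and all atoms are bounded in terms of $C_2$ alone and the four target moments are matched exactly.
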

	\noindent Now since $\tilde{W}_\tau$ is defined by independent Ornstein-Uhlenbeck processes in each entry, 
	it has the same distribution as
		\[ e^{-\tau/2} \tilde{W}_0 + \sqrt{1-e^{-\tau}}W   \]
	where $W$ is a GOE matrix independent of $\tilde{W}_0$.
	So choosing $\gamma = 1-e^{-\tau}$,
	Lemma \ref{moment matching} says we can
	find $\tilde{W}_0$ so that the first three moments of the entries of $\tilde{W}_\tau$ 
	match the first three moments of the entries of $W$, and the fourth moments of the entries
	of each differ by ${\rm O}(\tau)$.  
	Our next goal is to prove Theorem \ref{4 moment matching theorem} which says that
	with $\tilde{W}_\tau$ constructed this way, if Theorem \ref{main theorem} 
	holds for $\tilde{W}_\tau$, then it holds for $W$. We first introduce stochastic domination and state
	Theorem \ref{local law} which we will use in the proof.
	
	\begin{definition}
		Let $X = \left(X^N(u): N \in \mathbb{N}, u \in U^N\right), Y = \left(Y^N(u): N \in \mathbb{N}, u \in U^N\right)$
		be two families of nonnegative random variables, where $U^N$ is a possibly $N$-dependent parameter set.
		We say that $X$ is stochastically dominated by $Y$, uniformly in $u$, if for every $\epsilon > 0$ and $D > 0$, there exists $N_0(\epsilon, D)$ such that
			\[ \sup_{u \in U^N} \mathbb{P}\left[ X^N(u) > N^\epsilon Y^N(u)\right] \leq N^{-D} \]
		for $N\geq N_0$. Stochastic domination is always uniform in all parameters,
		such as matrix indices and spectral parameters, that are not explicitly fixed.
		We will use the notation $X = O_\prec(Y)$ or $X \prec Y$ for the above property.
	\end{definition}
	
	\begin{theorem}[Theorem 2.1 in \cite{ErdYauYin2012Rig}]\label{local law}
	Let $W$ be a Wigner matrix satisfying (\ref{subgaussian}). Fix $\zeta > 0$ and define the domain
		\[ S = S_N(\zeta) := \left\{ E + {\rm i}\eta \,:\, |E| \leq \zeta^{-1}, \,N^{-1+\zeta} \leq \eta \leq \zeta^{-1} \right\}. \]
	Then uniformly for $i,j = 1,\hdots, N$ and $z \in S$, we have
	\begin{align*}
		 s(z) &= m(z) +  {\rm O}_\prec\left( \frac{1}{N\eta} \right),\\
	G_{ij}(z)& = \left(W -z\right)^{-1}_{ij} = 
		m(z)\delta_{ij} + {\rm O}_\prec\left( \sqrt{\frac{\Im  \left(m(z)\right)}{N\eta}} + \frac{1}{N\eta} \right).
		\end{align*}
	\end{theorem}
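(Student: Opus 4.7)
The plan is to establish the local semicircle law through the self-consistent equation approach pioneered by Erd\H{o}s--Schlein--Yau and refined by Erd\H{o}s--Yau--Yin. The argument proceeds in three phases: a weak entrywise bound derived from a Schur complement identity and large deviation estimates, a continuity/bootstrap scheme pushing down to the optimal spectral scale, and a final fluctuation averaging step needed to extract the sharp error $1/(N\eta)$ for the averaged quantity $s(z)$.

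First I would apply, for each $i$, the Schur complement identity
\begin{equation*}
G_{ii}(z) = \frac{1}{W_{ii} - z - \mathbf{h}_i^* G^{(i)}(z) \mathbf{h}_i},
\end{equation*}
where $G^{(i)}$ is the resolvent of the minor obtained by deleting the $i$-th row and column and $\mathbf{h}_i$ is the deleted column of $W$. The quadratic form has conditional mean $N^{-1} \mathrm{Tr}\, G^{(i)}$, which by eigenvalue interlacing differs from $s(z)$ by at most $O(1/(N\eta))$. The subgaussian assumption (\ref{subgaussian}) supplies a Hanson--Wright concentration bound showing that, with $\prec$-probability, the quadratic form lies within $\sqrt{\mathrm{Im}\, s/(N\eta)}$ of its mean. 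Plugging back, each $G_{ii}$ approximately satisfies the defining equation $m + 1/m + z = 0$ of the semicircle transform. Stability of this equation in the bulk---the derivative $1-m^2$ stays bounded away from $0$ for $E \in [-2+\kappa, 2-\kappa]$---converts an error of size $\Lambda$ in the equation into a comparable bound $|G_{ii} - m| \prec \Lambda$.

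The bootstrap handles the remaining difficulty of pushing this analysis down to $\eta \geq N^{-1+\zeta}$. At the trivial scale $\eta \sim 1$ the bounds follow from $\|G(z)\| \leq 1/\eta$; decreasing $\eta$ geometrically over a polynomial grid, a self-improving estimate upgrades a weak a priori control on $|s - m|$ to the strong control at the same scale, with a union bound over the grid (and Lipschitz continuity of $G$ in $z$) filling in between. The off-diagonal bound on $G_{ij}$ for $i \neq j$ follows by the analogous Schur identity expressing $G_{ij}$ via $W_{ij}$ and a bilinear form in columns $i$ and $j$ of the minor $G^{(ij)}$; the large deviation estimate applied directly, without averaging, already yields the fluctuation size $\sqrt{\mathrm{Im}\, m/(N\eta)}$.

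I expect the main obstacle to be tightening the pointwise fluctuation $\sqrt{\mathrm{Im}\, m/(N\eta)}$ to the optimal $1/(N\eta)$ for the averaged Stieltjes transform $s(z)$. This requires the fluctuation averaging lemma, which exploits cancellations among the Schur errors $\epsilon_i = \mathbf{h}_i^* G^{(i)} \mathbf{h}_i - N^{-1} \mathrm{Tr}\, G^{(i)}$ upon averaging in $i$. Its proof proceeds by a high-moment computation of $N^{-1} \sum_i \epsilon_i$, using resolvent identities relating $G^{(i)}$ to $G^{(ij)}$ and organizing the contributions combinatorially to witness an extra $N^{-1/2}$ gain. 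The subgaussian decay (\ref{subgaussian}) enters essentially here, since the moment bounds in the averaging lemma demand control of arbitrarily high polynomial moments of the entries of $W$.
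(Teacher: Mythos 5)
You should first note that the paper contains no proof of this statement: it is imported verbatim as Theorem 2.1 of \cite{ErdYauYin2012Rig}, so there is no in-paper argument to compare yours against. Your outline is, in substance, exactly the strategy of that cited work (and its later formulations): Schur complement identities for $G_{ii}$ and $G_{ij}$, large deviation estimates for the quadratic and bilinear forms in the removed columns, stability of the self-consistent equation $m+1/m+z=0$, a bootstrap/continuity scheme in $\eta$ down to $N^{-1+\zeta}$ with a union bound over a polynomial grid, and a fluctuation averaging argument to upgrade the entrywise error $\sqrt{\Im m/(N\eta)}$ to the optimal $1/(N\eta)$ for the averaged quantity $s(z)$. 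So at the level of a sketch you have reproduced the standard argument rather than found a different route, and the role you assign to the subgaussian hypothesis (high moments for the large deviation and averaging steps) is the right one.

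One place where your sketch does not deliver the theorem as stated: you justify stability only in the bulk, using that $1-m_{sc}^2$ is bounded away from zero for $E\in[-2+\kappa,2-\kappa]$. The theorem is asserted uniformly on $S$, i.e.\ for all $|E|\le \zeta^{-1}$, which includes the spectral edges $E=\pm 2$ and the region outside the spectrum, where $|1-m_{sc}^2|\asymp\sqrt{|E^2-4|+\eta}$ degenerates and the quadratic self-consistent equation has a nearby spurious root. The proof in \cite{ErdYauYin2012Rig} handles this with a quantitative dichotomy/continuity argument in which the smallness of $\Im m_{sc}$ away from the bulk compensates the weak stability; without that input your argument yields only the bulk version (which happens to be all this paper uses, since its spectral parameters are $z=\ii\eta$ with $E=0$, but it is not the quoted statement). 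A smaller bookkeeping point: the concentration step should be stated in terms of the minor's resolvent, e.g.\ $N^{-1}\sum_k|G^{(i)}_{ik}|^2=\Im G^{(i)}_{ii}/(N\eta)$ via the Ward identity, rather than in terms of $\Im s$, since only the former is available a priori inside the bootstrap.
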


	\begin{theorem}\label{4 moment matching theorem}
		Let $F: \mathbb{R} \to \mathbb{R}$ be smooth with compact support, and
		let $W$ and $V$ be two Wigner matrices 
		satisfying (\ref{subgaussian}) such that for $1 \leq i, j \leq N$,
			\begin{numcases}{\mathbb{E}\left(w_{ij}^a\right) = }
				\label{moment matching assumption 3}
				\mathbb{E}\left(v_{ij}^a\right)  & $a \leq 3$ \\
				\label{moment matching assumption 4}
				\mathbb{E}\left(v_{ij}^a\right)+ \OO(\tau) & $a = 4$,
			\end{numcases}
		where $\tau$ is as in (\ref{def:tau}). Further, let $c_N$ be any deterministic sequence and define
			\[
				u_N(W) = \frac{\log | \det \left(W + \ii\eta_0\right)| +c_N}{\sqrt{\log N}}.
			\]
		where $\eta_0$ is as in (\ref{def:eta0}). Then
			\begin{equation}
				\lim_{N\to \infty} \mathbb{E} \left(F\left( u_N(W)\right) - F\left( u_N(V) \right)\right) = 0.\label{eqn:enough}
			\end{equation} 
	\end{theorem}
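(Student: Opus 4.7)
The plan is to prove (\ref{eqn:enough}) by a Lindeberg-type Green's function comparison argument, following \cite{TaoVu2011,ErdYauYin2012Bulk}. Order the pairs $(i,j)$ with $i\leq j$ as $\gamma_1,\dots,\gamma_M$ with $M=N(N+1)/2$, and construct an interpolating sequence $W=W^{(0)},W^{(1)},\dots,W^{(M)}=V$ where $W^{(k)}$ is obtained from $W^{(k-1)}$ by replacing the entry at $\gamma_k$ (and its symmetric partner) with the corresponding entry of $V$. Using the telescoping identity
\[
\E[F(u_N(W))]-\E[F(u_N(V))] = \sum_{k=1}^M \pb{\E[F(u_N(W^{(k-1)}))]-\E[F(u_N(W^{(k)}))]},
\]
it suffices to show that each summand is $o(N^{-2})$.

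For each $k$, let $Q$ denote the matrix obtained from $W^{(k-1)}$ by setting the entry at position $\gamma_k=(i,j)$ to zero (and symmetrizing). The matrices $W^{(k-1)}$ and $W^{(k)}$ both differ from $Q$ only by a symmetric perturbation of size $N^{-1/2}x$ at the $(i,j)$ slot, where $x$ is distributed either as an entry of $W$ or of $V$. Taylor expand $F(u_N(\cdot))$ in the scalar variable $x$ around $x=0$ up to order five; taking expectation in $x$ under each law and subtracting, the matching assumption (\ref{moment matching assumption 3}) cancels orders zero through three exactly, assumption (\ref{moment matching assumption 4}) leaves a fourth-order contribution bounded by $C\tau\cdot|\partial_x^4 F(u_N(Q))|$, and the fifth-order Taylor remainder is controlled by $\sup_{|\xi|\leq T}|\partial_x^5 F(u_N(Q+N^{-1/2}\xi E^s_{ij}))|$, together with a super-polynomially small tail contribution from $|x|>T:=(\log N)^{10}$ coming from the subgaussian assumption (\ref{subgaussian}).

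To bound these derivatives, differentiation of $\log\det(W+\ii\eta_0)=\mathrm{Tr}\log(W+\ii\eta_0)$ expresses $\partial_{w_{ij}}^a \log|\det(W+\ii\eta_0)|$ as an $O(1)$-sized polynomial of degree at most $a$ in the entries of the resolvent $G(-\ii\eta_0)$. At the scale $N\eta_0=e^{(\log N)^{1/4}}$, the sharper variant of Theorem \ref{local law} recalled in the remark after Theorem \ref{schlein} gives $|G_{ij}(-\ii\eta_0)|\leq C$ with overwhelming probability, uniformly under a rank-one perturbation such as $N^{-1/2}\xi E^s_{ij}$ with $|\xi|\leq T$. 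Since $\partial_x=N^{-1/2}\partial_{w_{ij}}$, this yields $|\partial_x^a u_N(\cdot)|\prec N^{-a/2}/\sqrt{\log N}$ and, by the chain rule with $F$ and its derivatives all bounded, $|\partial_x^a F(u_N(\cdot))|\prec N^{-a/2}$. Each summand in the telescoping sum is therefore $O_\prec(\tau N^{-2}+N^{-5/2})$, and summation over $M\lesssim N^2$ terms gives a total of $O_\prec(\tau+N^{-1/2})=o(1)$, proving the theorem.

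The main obstacle is upgrading the $O_\prec$ bounds just described to genuine expectation bounds suitable for the Lindeberg argument. Since $F$ is bounded, the contribution to $\E[F(u_N(\cdot))]$ from the low-probability exceptional event where the local law fails is trivially $O(N^{-D})$; but the Taylor remainder involves derivatives of $F(u_N)$ that, off this event, admit only the crude deterministic bound $\|G(-\ii\eta_0)\|\leq\eta_0^{-1}=N e^{-(\log N)^{1/4}}$, introducing a polynomial $N^{O(1)}$ loss. This loss is absorbed by truncating $x$ at $T=(\log N)^{10}$ via subgaussianity and combining the $N^{-D}$ probability of the bad event with the $N^{O(1)}$ deterministic derivative bound, rendering the exceptional contribution super-polynomially small --- a standard step already present in \cite{ErdYauYin2012Bulk} that poses no essential difficulty.
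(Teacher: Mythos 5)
Your proposal is correct in outline and rests on the same Lindeberg/Green's-function-comparison philosophy as the paper, but it is organized along a genuinely different (and leaner) route. The paper does not Taylor expand $F(u_N(\cdot))$ directly: it first uses (\ref{log via fundamental theorem}) to write $\log|\det(W+\ii\eta_0)|=\log|\det(W+\ii)|-N\Im\int_{\eta_0}^1 s_W(\ii\eta)\,\rd\eta$, disposes of the boundary term $\log|\det(W+\ii)|$ via the macroscopic CLT of Lytova--Pastur together with Lemma \ref{expectation}, and is thereby reduced to comparing $\E\,\wt F\bigl(N\Im\int_{\eta_0}^1 s_W(\ii\eta)\,\rd\eta\bigr)$ for the two ensembles; the swap is then handled through the resolvent expansion $S=R-N^{-1/2}RUR+\dots$ of $\mathrm{Tr}\,S(\ii\eta)$, with Ward-identity bounds integrated over $\eta\in[\eta_0,1]$. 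You instead expand $F(u_N(Q+N^{-1/2}xE^s_{ij}))$ in the scalar $x$, exploiting that $\partial_x^a\log\det(W+\ii\eta_0)$ is, up to combinatorial factors, $\mathrm{Tr}\bigl((G A)^a\bigr)$ with $A=E^{ij}+E^{ji}$, hence a bounded-length polynomial in the three resolvent entries $G_{ii},G_{jj},G_{ij}$ at the single spectral parameter $\ii\eta_0$, with no extra summed index. This buys you a shorter argument: you never need $\wt F$, the Lytova--Pastur input, or Lemma \ref{expectation} inside this proof, and your per-swap bound $\OO_\prec(\tau N^{-2}+N^{-5/2})$, summed over $N^2$ swaps, closes the argument for $\zeta$-type losses small compared to $\epsilon$ and $1/10$. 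The paper's detour keeps every estimate in the form covered by the quoted local law on $[\eta_0,1]$ and stays closer to the Tao--Vu template, at the price of the extra reduction step.

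The one point you should shore up is the resolvent input at $\eta_0$. You justify $|G_{ab}(\ii\eta_0)|\leq C$ with overwhelming probability by appealing to ``the sharper variant of Theorem \ref{local law} recalled in the remark after Theorem \ref{schlein}'', but that remark concerns only the averaged quantity $s_W$, not individual entries $G_{ij}$, and Theorem \ref{local law} as stated requires $\eta\geq N^{-1+\zeta}$, whereas $\eta_0=e^{(\log N)^{1/4}}/N$ lies below $N^{-1+\zeta}$ for every fixed $\zeta>0$ (and the $N^{\epsilon}$ losses built into $\prec$ are useless at the scale $N\eta_0=e^{(\log N)^{1/4}}$). What you need is the entrywise local law with subpolynomial error factors (of size $e^{C(\log\log N)^2}$ or $(\log N)^{C\log\log N}$), valid down to scales $N\eta$ of this size; this is available in \cite{ErdYauYin2012Rig} and should be invoked explicitly, or alternatively you can note that polynomial losses $N^{C\zeta}$ with $\zeta$ much smaller than $\epsilon$ would also suffice for your counting, provided you can actually produce an entrywise bound at $\eta_0$ (plain monotonicity in $\eta$ only controls $\Im G_{ii}$, not $G_{ij}$). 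This is a fixable citation-level gap rather than a flaw in the scheme --- indeed the paper's own proof likewise applies its quoted local law to resolvents at $\eta\in[\eta_0,N^{-1+\zeta}]$ --- but as written your key derivative bound is not covered by the results you cite.
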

	\begin{proof}		
		As in \cite{TaoVu2012}, where the authors also used the following technique to analyze
		fluctuations of determinants, we show that the effect of substituting $W_{ij}$ in place of $V_{ij}$ in $V$ is
		negligible enough that making $N^2$ replacements, we conclude the theorem. \\

		\noindent Fix $(i,j)$ and let
		$E^{(ij)}$ be the matrix whose elements are $E^{(ij)}_{kl} = \delta_{ik}\delta_{jl}$.
		Let $W_1$ and $W_2$ be two adjacent matrices in the swapping process described above. 
		Since $W_1, W_2$ differ in just the $(i,j)$ and $(j,i)$ coordinates, we may write
			$$
				W_1 = Q + \frac{1}{\sqrt{N}} U, \ \ \  \ \
				W_2 = Q + \frac{1}{\sqrt{N}} \tilde{U}
			$$
		where $Q$ is a matrix with $Q_{ij} = Q_{ji} = 0$, and
			$$
				U = u_{ij}E^{(ij)} + u_{ji}E^{(ji)}\ \ \ \ \ \ 
				\tilde{U} = \tilde{u}_{ij}E^{(ij)} + \tilde{u}_{ji}E^{(ji)}.
		$$
		Importantly $U,\tilde{U}$ satisfy the same moment matching conditions
		we have imposed on $\tilde{W}_\tau$ and $W$. 
		Now by the fundamental theorem of calculus, we have for any symmetric matrix $W$,
			\begin{equation}
			\label{log via fundamental theorem}
			\log\left|\det(W + {\rm i}\eta_0)\right| = 
				\sum_{k=1}^N \log\left| x_k + {\rm i}\eta_0\right| = \log\left|\det(W + {\rm i})\right| - N\,
				 \Im  \int_{\eta_0}^1
				s_W\left({\rm i}\eta \right){\rm d}\eta.  
			\end{equation}
			From the central limit theorems for linear statistics of Wigner matrices on macroscopic scales \cite{LytPas2009}, 
			 $(\log\left|\det(W + {\rm i})\right|-\E(\log\left|\det(W + {\rm i})\right|))/\sqrt{\log N}$ 
			converges to $0$ in probability (the same result holds with $W$ replaced with $V$), and from Lemma \ref{expectation} (which clearly holds with $1$ in place of $\tau$), 
			$(\E(\log\left|\det(W + {\rm i})\right|)-\E(\log\left|\det(V + {\rm i})\right|))/\sqrt{\log N}\to 0$.
			Therefore (\ref{eqn:enough}) is equivalent to
				\begin{equation}
		\label{key expression}
			\lim_{N\to \infty} \mathbb{E} \left(\wt F\left(N\,
				 \Im  \int_{\eta_0}^1
				s_W\left({\rm i}\eta \right){\rm d}\eta\right) - \wt F\left(N\,
				 \Im  \int_{\eta_0}^1
				s_V\left({\rm i}\eta \right){\rm d}\eta\right)\right)= 0,
			\end{equation}
			where
			$$
			\wt F(x)=F\left(\frac{\E(\log\left|\det(W + {\rm i})\right|)+c_N-x}{\sqrt{\log N}}\right).
			$$

		We now expand $s_{W_1}$ and $s_{W_2}$ around $s_{Q}$, and then
		to Taylor expand $\wt F$.
		So let 
			\[ R =R(z)= \left(Q-z\right)^{-1} \text{ and } S=S(z)=\left(W_1 - z\right)^{-1}. \]
		By the resolvent expansion
			\[ 
			S = R - N^{-1/2}RUR + \hdots + N^{-2}(RU)^4R - N^{-5/2}(RU)^5 S,
			\]
		we can write
			\[N \int_{\eta_0}^1 s_{W_1}({\rm i}\eta){\rm d}\eta = \int_{\eta_0}^1\text{Tr}\left(S({\rm i}\eta)\right){\rm d}\eta = 
			\int_{\eta_0}^1 \text{Tr}\left(R({\rm i}\eta)\right){\rm d}\eta +
			\left( \sum_{m=1}^4 N^{-m/2}\hat{R}^{(m)}({\rm i}\eta) - N^{-5/2}\Omega\right)
				:= \hat{R} + \xi  \]
		where 
			\[ \hat{R}^{(m)} = (-1)^m \int_{\eta_0}^1 \text{Tr}\left((R({\rm i}\eta)U)^mR({\rm i}\eta)\right){\rm d}\eta 
			\quad \text{and} \quad
			\Omega = \int_{\eta_0}^1 \text{Tr} \left( (R({\rm i}\eta)U)^5S({\rm i}\eta) \right){\rm d}\eta.\]
		This gives us an expansion of $s_{W_1}$ around $s_{Q}$. 
		Now Taylor expand $\wt F(\hat{R} + \xi)$ as
			\begin{equation} \label{taylor expansion} \wt F\left(\hat{R}+\xi\right) = \wt F\left(\hat{R}\right) + 
				\wt F'\left(\hat{R}\right)\xi + \hdots + \wt F^{(5)}\left(\hat{R}+\xi'\right)\xi^5 = \sum_{m=0}^5 N^{-m/2}A^{(m)}
			\end{equation}
		where $0 < \xi' < \xi$, and we have introduced the notation $A^{(m)}$ in order to arrange terms according to
		powers of $N$. For example
			$$
				A^{(0)} = \wt F\left(\hat{R}\right),\ \ 
				A^{(1)} = \wt F'\left(\hat{R}\right)\hat{R}^{(1)}, \ \ 
				A^{(2)} = \wt F'\left(\hat{R}\right)\hat{R}^{(2)} + \wt F''\left(\hat{R}\right)\left(\hat{R}^{(1)}\right)^2.
			$$
		Making the same expansion for $W_2$, we record our two expansions as
			\[ \wt F\left(\hat{R} + \xi_i\right) = \sum_{m=0}^5 N^{-m/2} A^{(m)}_i, \quad i = 1, 2, \]
		with $\xi_i$ corresponding to $W_i$. With this notation, we have
			\begin{align*} 
				\mathbb{E}\left(\wt F\left(\hat{R} + \xi_1\right)\right) - \mathbb{E} 
				\left(\wt F\left(\hat{R} + \xi_2\right)\right)
				&=  \mathbb{E} \left(\sum_{m=0}^5 N^{-m/2}\left( A_1^{(m)} - A_2^{(m)} \right)\right) .
			\end{align*}
		Now only the first three moments of $U,\tilde{U}$ appear in the terms corresponding to $m=1, 2, 3$, so
		by the moment
		matching assumption (\ref{moment matching assumption 3}), all of these terms are all identically zero. 
		Next, consider $m = 4$. Every term with first, second, and third moments of
		$U$ and $\tilde{U}$ is again zero, and what remains is
			\[ \mathbb{E} \left(\wt F'(\hat{R}) \left( \hat{R}_1^{(4)} - \hat{R}_2^{(4)}\right)\right). \]  
		\noindent So we can discard $A^{(4)}$ if 
			\begin{equation}\label{4th moment}
			 \int_{\eta_0}^1 \left|\mathbb{E} \left( 
			 	{\rm Tr}\left( (RU)^4R \right) - {\rm Tr}\left( (R\tilde{U})^4R\right) \right)   \right| {\rm d}\eta 
			 \end{equation}
		is small. To see that this is in fact the case, we expand the traces, and apply Theorem \ref{local law}
		along with our fourth moment matching assumption (\ref{moment matching assumption 4}). 
		Specifically,
			\[ \text{Tr}\left( (RU)^4R\right) = \sum_j \left(\sum_{i_1, \hdots, i_8} 
				R_{ji_1}U_{i_1i_2}R_{i_2i_3}\hdots U_{i_7i_8}R_{i_8j}\right). \]
		Writing the corresponding Tr for $W_2$ and applying the moment matching assumption, we see that we can
		bound (\ref{4th moment}) by
			\[ {\rm O}(\tau) \int_{\eta_0}^1\sum_j \sum_{i_1, \hdots, i_8} \mathbb{E} \left(
				\left| R_{ji_1}R_{i_2i_3}R_{i_4i_5}R_{i_6i_7}R_{i_8j} \right| \right){\rm d}\eta. \]
		To bound the terms in the sum, we need to count the number of diagonal and off-diagonal terms in each product.
		To do this, let us say $U_{pq}, \tilde{U}_{pq}$ and $U_{qp}, \tilde{U}_{qp}$ are the only non-zero entries of 
		$U, \tilde{U}$. Then
		each of the sums over $i_1, \hdots, i_8$ are just sums over $p, q$, and when $j \notin \{p,q\}$, 
		$R_{ji_1}$ and $R_{i_8j}$ are certainly off-diagonal entries of $R$. This means we can apply Cauchy-Schwartz
		to write that for any $\gamma > 0$,
			\begin{align*}
				{\rm O}(\tau) \int_{\eta_0}^1\sum_{j \notin \{p, q\}} \sum_{i_1, \hdots, i_8} \mathbb{E} \left(
					\left| R_{ji_1}R_{i_2i_3}R_{i_4i_5}R_{i_6i_7}R_{i_8j} \right| \right){\rm d}\eta 
				= {\rm O}\left (\tau N^{1 + 2\gamma}  \int_{\eta_0}^1 \frac{1}{N\eta}{\rm d}\eta\right) = 
					{\rm O}\left (N^{2\gamma - \epsilon} \log(N)\right).
			\end{align*}
		Similarly, 
			\begin{align*}
				{\rm O}(\tau) \int_{\eta_0}^1\sum_{j \in \{p, q\}} \sum_{i_1, \hdots, i_8} \mathbb{E} \left(
					\left| R_{ji_1}R_{i_2i_3}R_{i_4i_5}R_{i_6i_7}R_{i_8j} \right| \right){\rm d}\eta 
				= {\rm O}\left (\tau N^{\epsilon /2} \right) = 
					{\rm O}\left (N^{- \epsilon/2} \right).
			\end{align*}
		Since $A^{(4)}$ has a pre-factor of $N^{-2}$ in (\ref{taylor expansion}), and the above holds
		for every choice of $\gamma > 0$, in our entire 
		entry swapping scheme starting from $V$ and ending with $W$, the corresponding error
		is ${\rm o}(1)$.\\
		
		\noindent Lastly we comment on the error term $A^{(5)}$. All terms in $A^{(5)}$ not involving 
		$\Omega$ can be dealt
		with as above. The only term involving $\Omega$ is $\wt F'(\hat{R})\Omega$, and to deal with this,
		we can expand the expression for $\Omega$ as above. We do not have any
		moment matching condition for the fifth moments of $U, \tilde{U}$, but (\ref{subgaussian}) means
		that their fifth
		moments are bounded which is enough for our purpose since $A^{(5)}$ has a pre-factor of $N^{-5/2}$ above. 
		\end{proof}	
		
	\subsection{Proof of Theorem \ref{main theorem}. }\label{sub:end}
	In this section we first prove Proposition \ref{variance} and, using Lemma 
	\ref{expectation}, we
	conclude the proof of Theorem \ref{main theorem}.

	\begin{proposition}\label{prop:variance} Recall $\tau=N^{-\e}$.
	\label{variance} There exist $\e_0,C$ such that for any fixed $0<\e<\e_0$, for large enough $N$, we have
	\[ {\rm Var}\left( \sum_k\log|x_k(0)+\ii\tau|\right) \leq C (1+\epsilon \log N).\]
\end{proposition}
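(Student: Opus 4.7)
The plan is to decompose $\log|x+\ii\tau|$ into a macroscopic piece whose variance is $O(1)$ and a mesoscopic bump whose $H^{1/2}$ norm squared is $O(\e\log N)$, then combine the two contributions via the triangle inequality for variance. Since $\partial_\eta \log|x+\ii\eta| = \eta/(x^2+\eta^2) = \Im(x-\ii\eta)^{-1}$, the fundamental theorem of calculus gives
$$
\sum_k \log|x_k(0)+\ii\tau|
\;=\; \sum_k \log|x_k(0)+\ii|  \;-\; \int_\tau^1 N\,\Im s_{W_0}(\ii\eta)\,d\eta
\;=\; \sum_k \log|x_k(0)+\ii| - \sum_k \phi(x_k(0)),
$$
where $\phi(\lambda) \deq \tfrac{1}{2}\log\pb{(\lambda^2+1)/(\lambda^2+\tau^2)}$ is a smooth bump of height $\sim\log(1/\tau)$ concentrated near $\lambda = 0$ on scale $\tau$. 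Cauchy--Schwarz gives
$$
\var\pB{\sum_k \log|x_k(0)+\ii\tau|} \;\le\; 2\var\pB{\sum_k \log|x_k(0)+\ii|} + 2\var\pB{\sum_k \phi(x_k(0))}.
$$
The first piece is $O(1)$, because $x\mapsto\tfrac{1}{2}\log(x^2+1)$ is smooth and bounded on $[-2,2]$ and $\{x_k(0)\}$ are GOE eigenvalues; the macroscopic Lytova--Pastur CLT \cite{LytPas2009} applies and delivers bounded limiting variance.

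For the second piece, the classical GOE variance formula (Johansson / loop equations) bounds $\var(\sum_k \phi(x_k(0)))$ by $C\|\phi\|_{H^{1/2}(\R)}^{2} + O(1)$. A direct Fourier calculation using the Poisson-kernel identity $\mathcal{F}_\lambda\pb{\eta/(\lambda^2+\eta^2)}(\xi) = \pi e^{-\eta|\xi|}$ gives $\widehat{\phi}(\xi) = \pi(e^{-\tau|\xi|} - e^{-|\xi|})/|\xi|$, whence by Frullani's identity
$$
\|\phi\|_{H^{1/2}}^{2} \;=\; 2\pi^2 \int_0^\infty \frac{(e^{-\tau\xi}-e^{-\xi})^2}{\xi}\,d\xi \;=\; \pi^2\,\log\frac{(1+\tau)^2}{4\tau} \;=\; O(1 + \e\log N).
$$
Combining the two estimates yields the proposition with some constant $C$ independent of $\e$ and $N$.

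The main obstacle is to justify rigorously the variance formula for the test function $\phi$, which varies on the mesoscopic scale $\tau = N^{-\e}$ rather than being macroscopic. Two equivalent routes are available. Route (a): cite a mesoscopic variance bound tailored to GOE at scale $N^{-\e}$ (e.g.\ the statement underlying the mesoscopic CLT in \cite{LanSos2018}, which handles exactly this type of test function). Route (b): work directly from $\sum_k \phi(x_k(0)) = \int_\tau^1 N\Im s_{W_0}(\ii\eta)\,d\eta$, write
$$
\var\pB{\sum_k \phi(x_k(0))} \;=\; \int_\tau^1\!\!\int_\tau^1 \cov\pb{N\Im s_{W_0}(\ii\eta),\, N\Im s_{W_0}(\ii\eta')}\,d\eta\,d\eta',
$$
and bound the covariance kernel by $C/(\eta+\eta')^{2}$ via Theorem~\ref{local law} plus a standard second-moment resolvent computation for GOE; the elementary integral $\int_\tau^1\!\int_\tau^1(\eta+\eta')^{-2}d\eta\,d\eta' = O(\log(1/\tau))$ then produces the same $O(\e\log N)$ bound. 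Either route requires only $\tau \gg N^{-1}$, which holds for any fixed $0 < \e < 1$.
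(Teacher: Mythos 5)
Your proposal is correct and follows essentially the paper's own argument: the paper gives the same two routes, one extending the Lytova--Pastur variance formula to this $N$-dependent test function via rigidity and bounding it by $C\int|\xi|\,|\hat\varphi(\xi)|^2\,\rd\xi$ (your Fourier/$H^{1/2}$ computation, which you evaluate exactly via the Poisson kernel and Frullani rather than by the paper's direct estimates), and one bounding the covariance of $N\Im s(\ii\eta)$ and $N\Im s(\ii\eta')$ by $C/(\eta+\eta')^2$ and integrating, exactly your route (b), with the kernel bound obtained in the paper from \cite{KhoKhoPas} plus rigidity. Note only that in route (b) the local law by itself gives just $1/(\eta\eta')$, whose double integral is of order $(\e\log N)^2$, so the $(\eta+\eta')^{-2}$ bound genuinely requires the two-resolvent covariance computation (opposite half-planes), as you correctly indicate.
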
	
\begin{proof}
We outline two proofs, which are trivial extensions of existing linear statistics asymptotics on global scales, to the case of almost macroscopic scales. The tool for this extension is the rigidity estimate from \cite{ErdYauYin2012Rig}: for any $c,D>0$, there exists $N_0$ such that for any $N\geq N_0$ and $k\in\llbracket 1,N\rrbracket$ we have
\begin{equation}\label{eqn:rigidity}
\mathbb{P}\left(|x_k-\gamma_k|>N^{-\frac{2}{3}+c}\min(k,N+1-k)^{-\frac{1}{3}}\right)\leq N^{-D}.
\end{equation}

\noindent For the first proof, we use (\ref{eqn:rigidity}) to bound all the error terms in the proof of \cite[Theorem 3.6]{LytPas2009} (these error terms all depend on \cite[Theorem 3.5]{LytPas2009}, which can be improved via 
(\ref{eqn:rigidity}) to ${\rm Var}(u_N(t))\leq N^c(1+|t|)$ and ${\rm Var}\left(\mathcal{N}_N(\varphi)\right)\leq N^c\|\varphi\|_{\rm Lip}^2$). What we obtain is that if $\varphi$ (possibly depending on $N$) satisfies $\int|t|^{100}\hat \varphi(t)<N^{1/100}$, then $\sum\varphi(x_k)-\E(\sum\varphi(x_k))$ has limiting variance asymptotically equivalent to 
		\begin{equation}
\label{vwig} 
		V_{\rm Wig}[\varphi] =  \frac{1}{2\pi^2} \int_{(-2,2)^2}
			\left( \frac{\Delta \varphi}{\Delta \lambda}\right)^2 
			\frac{4 - \lambda_1\lambda_2}{\sqrt{4 - \lambda_1^2}
			\sqrt{1- \lambda_2^2}} \,
			{\rm d}\lambda_1\rd\lambda_2+ \frac{\kappa_4}{2\pi^2}
			\left( \int_{-2}^{2} \varphi(\mu) \frac{2 - \mu^2}{\sqrt{4 - \mu^2}}\,
			{\rm d}\mu\right)^2,
		\end{equation}
where $\Delta \varphi = \varphi\left(\lambda_1\right) - \varphi\left(\lambda_2\right)$, $\Delta \lambda = \lambda_1 - \lambda_2$, $\mu_4 = \mathbb{E} \left(W_{jk}^4\right)$,
	$\kappa_4 = \mu_4 - 3$ is the fourth cumulant of the off-diagonal entries of $W$.
We choose $\varphi(x)=\varphi_N(x)=\frac{1}{2}\log(x^2+\tau^2)\chi(x)$ with $\chi$ fixed, smooth, compactly supported, equal to 1 on $(-3,3)$. Note that for $\e_0$ small enough, we have $\int|t|^{100}\hat \varphi(t)<N^{1/100}$. Then by (\ref{eqn:rigidity}) and (\ref{vwig}), 
$$
V_{\rm Wig}[\log|\cdot-\ii\tau|]\sim V_{\rm Wig}[\varphi]\leq C \iint \left(\frac{\Delta\varphi}{\Delta\lambda}\right)^2
	\, \rd \lambda_1 \rd \lambda_2
	= C \int |\xi|\, \left| \hat{\varphi}(\xi) \right|^2{\rm d}\xi,
$$
and the above right hand side can be bounded as follows.
	We have
		\begin{align*}
		 \left| \hat{\varphi}_N(\xi) \right| = \left| \frac{1}{2\pi} \int_{\mathbb{R}}
		 	 \varphi_N(x)e^{-i\xi x}\, \rd x\right|
		 &\leq 
		 C \left| \int_{-5}^{5} \frac{x}{x^2 + \tau^2} \frac{e^{-i\xi x}}{i\xi}\, \rd x\right| 
		 =C\left| \frac{1}{\xi} \int_0^{5/\tau} \frac{x}{x^2 + 1} \sin(x \xi \tau) \, \rd x \right|.
		\end{align*}
	For $0 < \xi < 5$, the inequality $|\sin x| < x$ shows $\left| \hat{\varphi}_N(\xi) \right| = {\rm O}(1)$, and when
	$\xi > 5/\tau$, integration by parts shows $\left|\hat{\varphi}_N(\xi)\right| = {\rm O}\left (\frac{1}{\xi^2\tau}\right)$. 
	When $5 < \xi < 5/\tau$, first note
		\begin{align*}
		\int_0^{\frac{5}{\tau}} \sin\left(\xi \tau x\right) \frac{x}{x^2 + 1} \, {\rm d}x &= C + 
			\int_1^{\frac{5}{\tau}} \frac{\sin\left(\xi \tau x\right)}{x} \,{\rm d}x 
			=  C + \int_{\xi \tau}^{1} \frac{\sin y}{y} \,{\rm d}y + \int_1^{5\xi} \frac{\sin y}{y}\,{\rm d}y.
		\end{align*}
	Using $|\sin y| < |y|$, we see that the first term is ${\rm O}(1)$, and integrating by parts, we see
	that the second term is ${\rm O}(1)$ as well. This means 
		\[ \int |\xi| \left| \hat{\varphi}_N(\xi) \right|^2{\rm d}\xi \leq C +C 
		\int_{5}^{\frac{5}{\tau}} \frac{1}{\xi} \,{\rm d}\xi = {\rm O}\left (1+|\log \tau|\right), 
		\]
	which concludes the proof.\\
	
\noindent The  second proof is similar but more direct. Theorem 3 in \cite{KhoKhoPas} implies that for $z_1=\ii\eta_1,z_2=\ii\eta_2$ at macroscopic distance from the real axis, and $\eta_1=\im z_1>0,\eta_2=\im z_2<0$, we have
	$$
	\left|{\rm Cov}\left(\sum_k\frac{1}{z_1-x_k},\sum_k\frac{1}{z_2-x_k}\right)\right|\leq\frac{C}{(\eta_1-\eta_2)^2}+f(z_1,z_2)+\OO(N^{-1/2}),
	$$
	where $f$ is a function uniformly bounded on any compact subset of $\mathbb{C}^2$.
	Using (\ref{eqn:rigidity}), one easily obtains that the formula above holds uniformly with $|\im z_1|,|\im z_2|>N^{-1/10}$, and the deteriorated error term $\OO(N^{-1/10})$, for example.	
Note that
$$
\log\left|\det(W + {\rm i}\eta)\right| = \log\left|\det(W + {\rm i})\right| - N\,
				 \Im  \int_{\eta}^1
				s_W\left({\rm i}x \right){\rm d}x.  
$$
and $\log\left|\det(W + {\rm i})\right|$ has fluctuations of order 1 due to the above macroscopic central limit theorems. 
For for $\eta>N^{-1/10}$, the variance of the above integral can be bounded by 
$
\iint_{[\eta,1]^2}\frac{1}{|\eta_1+\eta_2|^2}\,\rd \eta_1\rd \eta_2\leq C|\log \eta|,
$
which concludes the proof.
	\end{proof}

\noindent From (\ref{GOE CLT}) and Proposition \ref{smoothing}, for some explicit deterministic  $c_N$ we have
\begin{equation}\label{eqn:1}
\frac{ 			\sum_{k =1}^N \log \left|x_k(\tau) + {\rm i} \eta_0\right|+c_N}{\sqrt{\log N}}\to\mathscr{N}(0,1),
\end{equation}
and Proposition \ref{prop:advection} implies that
$$
\frac{ 			\sum_{k =1}^N \log \left|y_k(\tau) + {\rm i} \eta_0\right|+c_N}{\sqrt{\log N}}
+\frac{ 			\sum_{k =1}^N \log \left|x_k(0) + z_\tau\right|-\sum_{k =1}^N \log \left|y_k(0) + z_\tau\right|}{\sqrt{\log N}}
\to\mathscr{N}(0,1).
$$

	\noindent Lemma \ref{expectation} and Proposition \ref{variance} show that the second term above, call it $X$, satisfies $\E(X^2)<C\e$, for some universal $C$. Thus for any fixed smooth and compactly supported function $F$,
	\begin{align*}
	\E \left(F\left(\frac{ 			\sum_{k =1}^N \log \left|y_k(\tau) + {\rm i} \eta_0\right|+c_N}{\sqrt{\log N}}\right)\right)&=\E \left(F\left(\frac{ 			\sum_{k =1}^N \log \left|x_k(\tau) + {\rm i} \eta_0\right|+c_N}{\sqrt{\log N}}+X\right)\right)
	+\OO\left(\|F\|_{\rm Lip}(\E\left(X^2\right))^{1/2}\right)\\
	&=\E \left(F(\mathscr{N}(0,1))\right)+\oo(1)+\OO\left(\e^{1/2}\right).
	\end{align*}
With Theorem \ref{4 moment matching theorem}, the above equation implies
$$
\E\left( F\left(\frac{\log|\det(W+\ii\eta_0)|+c_N}{\sqrt{\log N}}\right)\right)=\E \left(F(\mathscr{N}(0,1))\right)+\oo(1)+\OO\left(\e^{1/2}\right),
$$		
and by Proposition \ref{smoothing}, we obtain
\begin{equation}\label{eqn:2}
\E \left(F\left(\frac{\log|\det W|+\frac{N}{2}}{\sqrt{\log N}}\right)\right)=\E \left(F(\mathscr{N}(0,1))\right)+\oo(1)+
\OO\left(\e^{1/2}\right).
\end{equation}
Since $\e$ is arbitrarily small, this concludes the proof.
\nc
\setcounter{equation}{0}
\setcounter{theorem}{0}
\renewcommand{\theequation}{A.\arabic{equation}}
\renewcommand{\thetheorem}{A.\arabic{theorem}}
\appendix
\setcounter{secnumdepth}{0}
\section[Appendix A\ \ \ Expectation of Regularized Determinants]
{Appendix A:\ \ \ Expectation of Regularized Determinants}

\noindent
We prove the following result, which we use both in the proof of Theorem \ref{4 moment matching theorem},
and to conclude the proof of Theorem \ref{main theorem}. 

\begin{lemma}\label{expectation}
	Recall the notation $\tau = N^{-\epsilon}$, and let $\{x_k\}_{k=1}^N$, $\{y_k\}_{k=1}^N$ denote the
	eigenvalues of two Wigner matrices, $W_1$ and $W_2$. Then
		\[ \mathbb{E}\left( \sum_k \log \left|x_k + {\rm i}\tau\right| - \sum_k \log\left|y_k + {\rm i}\tau\right|\right) = \OO(1).\]
	\end{lemma}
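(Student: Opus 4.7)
The strategy rests on two elementary identities and one sharpened Stieltjes-transform bound. I would begin with the identity, obtained by differentiating $\log|x + \ii\eta|$ in $\eta$ and integrating,
\begin{equation*}
\sum_k \log|x_k + \ii\tau| \;=\; \sum_k \log|x_k + \ii| \;-\; N \Im \int_{\tau}^{1} s_{W_1}(\ii\eta)\, \rd\eta,
\end{equation*}
and its analog for $W_2$. Taking expectations and subtracting decouples the problem into (i) a macroscopic difference $\E\bigl[\sum_k \log|x_k+\ii| - \sum_k \log|y_k+\ii|\bigr]$ and (ii) a mesoscopic contribution $N \int_{\tau}^{1} \E\bigl[\Im(s_{W_1}-s_{W_2})(\ii\eta)\bigr]\, \rd\eta$. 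Step (i) is $\OO(1)$ by the central limit theorem for linear statistics on macroscopic scales (the formula (\ref{vwig}), or directly \cite{LytPas2009}): for a fixed smooth test function $\phi$, $\E\bigl[\sum \phi(x_k)\bigr] = N\int \phi\, \rd\rho_{\rm sc} + C(\phi) + \oo(1)$, where $C(\phi)$ depends only on the first few moments of the entries. Applied to $\phi(x) = \log|x+\ii|$ (after a harmless smooth cutoff, with the tails controlled by the subgaussian assumption (\ref{subgaussian}) together with the rigidity estimate (\ref{eqn:rigidity})), subtracting the analogous formula for $W_2$ leaves an $\OO(1)$ difference.

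For step (ii), the naive bound from Theorem \ref{local law}, namely $\E[s_W(\ii\eta)] - m_{sc}(\ii\eta) = \OO(1/(N\eta))$, is not sharp enough: it only yields $\OO(\log(1/\tau)) = \OO(\epsilon\log N)$ after integration. I would instead establish the sharper estimate
\begin{equation*}
\bigl|\E[s_W(\ii\eta)] - m_{sc}(\ii\eta)\bigr| \;\le\; \frac{C}{N}, \qquad \tau \le \eta \le 1,
\end{equation*}
with $C$ depending only on the first few moments of the entries. This reflects the classical fact that the \emph{bias} of $s_W(\ii\eta)$ is one order smaller than its \emph{fluctuations}. The standard route is through the Schur-complement identity $G_{ii}^{-1}(z) = -z - W_{ii} - \sum_{j,k\ne i} W_{ij} G^{(i)}_{jk}(z) W_{ki}$, a cumulant expansion exploiting that the $k$-th cumulant of an entry is $\OO(N^{-k/2})$, a perturbative inversion of the self-consistent equation $m_{sc}^{-1} + m_{sc} + z = 0$, and a priori control on the entries of $G$ provided by Theorem \ref{local law}. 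Taking the $W_1 - W_2$ difference of the resulting expansion and integrating then gives $N\int_{\tau}^{1} \OO(1/N)\, \rd\eta = \OO(1)$, as required.

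The main obstacle is to make the sharpened expectation bound uniform down to the scale $\eta = \tau = N^{-\epsilon}$. This scale is still well above $1/N$, so the self-consistent equation remains stable and the Schur-complement/cumulant analysis goes through; nonetheless, the error terms in the expansion require careful bookkeeping. Observe that no moment matching between $W_1$ and $W_2$ is needed here: the leading correction in the expansion of $\E[s_W(\ii\eta)] - m_{sc}(\ii\eta)$ is simply $\mathrm{constant}/N$ for each Wigner matrix individually, so the difference is automatically $\OO(1/N)$ regardless of how the distributions of $W_1$ and $W_2$ compare.
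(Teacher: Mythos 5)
Your plan follows essentially the same route as the paper: write the log-determinant by the fundamental theorem of calculus as a macroscopic anchor plus $N\int \Im\, s_W(\ii\eta)\,\rd\eta$, and control the expectation of the Stieltjes transform down to $\eta=\tau$ by a Schur-complement expansion around the self-consistent equation with the local law as a priori input. There are two genuine (but modest) differences. First, you anchor at $\eta=1$ and invoke the Lytova--Pastur expectation asymptotics for the macroscopic linear statistic, whereas the paper anchors at $\eta=N^{\delta}$, bounds the difference there by rigidity, and then needs an extra Cauchy-integral-formula step to extend the mesoscopic estimate from $\eta\le 5$ up to $N^{\delta}$; your choice avoids that complex-analytic extension at the cost of relying on the $\OO(1)$ expectation correction for a fixed smooth test function, which is legitimate (the paper itself uses the same input in the proof of Theorem \ref{4 moment matching theorem}). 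Second, you phrase the key mesoscopic estimate as a per-matrix bias bound $\absb{\E\, s_W(\ii\eta)-m_{sc}(\ii\eta)}\le C/N$, while the paper expands the difference $N\,\E\,(s_{W_1}-s_{W_2})$ directly and obtains a bounded leading term $Cm_{sc}^5/(1-m_{sc}^2)$ (which does not cancel, as it involves the fourth moments, but is uniformly $\OO(1)$ and integrable) plus errors of size $N^{\gamma}N^{-1/2}\eta^{-5/2}$. Be aware that your clean pointwise $C/N$ claim is stronger than what this expansion with local-law input actually delivers near $\eta=\tau$: the error terms degrade as $\eta$ decreases and are not $\OO(1/N)$ there, only $\oo(1)$ after multiplying by $N$ and integrating. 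This is not a gap in substance --- the integrated statement is all the lemma needs, and it is exactly what your bookkeeping would produce --- but you should state the intermediate estimate in that integrated (or $\eta$-weighted) form rather than as a uniform $C/N$ bound.
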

	\begin{proof}
		By the fundamental theorem of calculus, we can write
		\begin{equation}
		\label{ftc log det}
			\sum_k \log \left|x_k + {\rm i}\tau \right| = \sum_{k=1}^N\log \left|x_k + {\rm i} N^{\delta}\right| + 
			N \int_\tau^{N^{\delta}} \Im \left(s_{W_1}({\rm i} \eta)\right){\rm d}\eta
		\end{equation}
	with $s_W$ as in ({\ref{def:stransform}}), and $\delta > 0$. 
	Writing the same expression for $W_2$ and taking the difference, we first note that by 
	(\ref{eqn:rigidity}),
	we have that for any $\gamma > 0$,
	         \begin{equation}\label{eqn:boundexp}
		  \mathbb{E}\left( \left|\sum_{k=1}^N  \left(\log\left|x_k + {\rm i} N^{\delta}\right| - 
			\log\left|y_k + {\rm i} N^{\delta}\right|\right)\right| \right)
			\leq \mathbb{E}\left(N^{-\delta} \sum_{i=1}^N \left| x_k - y_k\right|\right) = 
			{\rm O}\left( N^{\gamma - \delta} \right).
		\end{equation}
	Therefore, we only need to bound
		\begin{equation}
			\label{main term expectation}
			\Im  \left( N \int_\tau^{N^{ \delta }} \mathbb{E}\left(s_{W_1}(\ii \eta) - s_{W_2}(\ii \eta)\right) {\rm d}\eta \right).
		\end{equation}
	Let $z = E + {\rm i}\eta$ be in $S\left( \frac{1}{100} \right)$ (as defined in Theorem
	\ref{local law}), and define
		\[ f(z) = N\left(s_{W_1}(z) - s_{W_2}(z)\right). \]
	
	\noindent We will first estimate $\E\left(f(z)\right)$ for $\tau < \eta < 5$, where we can use
	Theorem \ref{local law}. Then we will
	use complex analysis to extend this estimate to $5 < \eta < N^{\delta}$. \\
	
	\noindent Let $\tau < \eta < 5$.
	Following the notation of \cite{ErdYauYin2012Rig}, let $W$ be a Wigner matrix and let
		\begin{equation*}
			v_i = G_{ii} - m_{sc}, \quad [v] = \frac{1}{N}\sum_{i=1}^N v_i,
			\quad G(z) = (W-z)^{-1}, 
		\end{equation*}
	We will use the notation
		$W^{(i)}$ 
	to
	denote the $(N -1) \times (N-1)$ matrix obtained by removing the $i^{\text{th}}$ row and column from $W$,
	and $w_i$ to denote the $i^{\text{th}}$ column of $W^{(i)}$ without $W_{ii}$. 
	We will also denote the eigenvalues of $W$
	by $ \lambda_1 < \lambda_2 < \dots \lambda_N$.
	Let
	$G^{(i)} = \left(W^{(i)} - z\right)^{-1}$.
	Applying the Schur complement formula to $W$ (see Lemma 4.1 in \cite{ErdYauYin2012Bulk}), we have 
		\begin{equation}
		\label{schur complement}
			v_i + m_{sc} =  \left( -z - m_{sc} + W_{ii} - [v] + \frac{1}{N}\sum_{j\neq i} 
				\frac{G_{ij}G_{ji}}{G_{ii}} - Z_i \right)^{-1} = 
				\left( -z -m_{sc} - \left([v]- \Gamma_i \right)\right)^{-1} 
		\end{equation}
	where
		\begin{equation*}
			Z_i = (1 - \mathbb{E}_i)(w_i, G^{(i)}w_i), \quad \mathbb{E}_i(X) = \mathbb{E}\left(X | W^{(i)}\right),
			\quad
			\Gamma_i =  \frac{1}{N}\sum_{j\neq i} \frac{G_{ij}G_{ji}}{G_{ii}} - Z_i + W_{ii}.
		\end{equation*}
	By Theorem \ref{local law}, 
		\begin{equation}
		\label{crude estimate for gammai}
			\left|\Gamma_i - \left[v\right] \right| = \OO_\prec\left( \frac{1}{N^\frac{1}{2}\eta^\frac{1}{2}}\right), 
		\end{equation}
	so we can expand (\ref{schur complement}) around $-z-m_{sc}$. Using 
	(\ref{self consistent m}), we find
		\begin{align*}
			v_i &= m_{sc}^2\left( [v] - \Gamma_i \right) + m_{sc}^3 \left( [v] - \Gamma_i \right)^2
			+ {\rm O}\left ( \left( [v] - \Gamma_i \right)^3 \right) \\
			&= m_{sc}^2\left([v] - W_{ii} -\frac{1}{N}\sum_{j\neq i}  \frac{G_{ij}G_{ji}}{G_{ii}}
				+ Z_i\right) + m_{sc}^3 \left([v] - \Gamma_i \right)^2
			+ {\rm O}\left ( \left( [v] - \Gamma_i \right)^3 \right),
		\end{align*}
	and summing over $i$ and taking expectation, we have
		\begin{equation}\label{vi} \mathbb{E} \left((1-m_{sc}^2) \sum_i v_i \right)
		= \mathbb{E} \left( -\frac{m_{sc}^2}{N} \sum_{i=1}^N \sum_{j\neq i}^N \frac{G_{ij}G_{ji}}{G_{ii}} + 
			m_{sc}^3 \sum_i\left([v]- \Gamma_i \right)^2
			+ \sum_i{\rm O}\left ( \left( [v] - \Gamma_i \right)^3 \right)\right),
		\end{equation}
	since the expectations of $W_{ii}$ and $Z_i$ are both zero. We now use this expansion to estimate $\E(f(z))$.
	Since we $\tau < \eta < 5$, we have by 
	Theorem \ref{local law} that
		\begin{align}
		\label{first term of expansion}
		\frac{m_{sc}^2}{N} \sum_i \sum_{j\neq i} \frac{G_{ij}G_{ji}}{G_{ii}} &=
			\frac{m_{sc}}{N}\left(
				\sum_{i,j =1}^N G_{ij}G_{ji} - \sum_{i=1}^N \left(G_{ii}\right)^2
			\right)
			+ {\rm O}_\prec
			\left( \frac{1}{N^{\frac{1}{2}}\eta^{\frac{1}{2}}}\right) 
			\frac{m_{sc}}{N} \sum_{i}\sum_{j \neq i} \left|G_{ij}G_{ji}\right|.
		\end{align}
	Now observe that 
	\begin{align*}
		\frac{m_{sc}}{N} \sum_{i,j}G_{ij}G_{ji} &= \frac{m_{sc}}{N} \text{Tr} \left(G^2\right)
		= \frac{m_{sc}}{N} \sum_{k=1}^N \frac{1}{\left( \lambda_k - z \right)^2},
		 \end{align*}
	and
		\[ \frac{1}{N}\sum_{k=1}^N \frac{1}{\left( x_k - z \right)^2} - 
			\frac{1}{N}\sum_{k=1}^N\frac{1}{\left(y_k - z \right)^2} =  
			s'_{W_1}(z) - s'_{W_2}(z).
		\]
	Choosing $\mathcal{C}(z) = \left\{ w \,:\, |w-z| = \frac{\eta}{2} \right\}$, we have 
		\begin{align}
			\label{complex derivative}
			\left| s'_{W_1}(z) - s'_{W_2}(z) \right|
			\leq 
			\frac{1}{2\pi} \int_{\mathcal{C}(z)} \frac{\left| s_{W_1}(z) - s_{W_2}(z)  \right|}{(\zeta-z)^2} \,{\rm d}\zeta
			= {\rm O}_\prec\left( \frac{1}{N\eta^2} \right)
		\end{align}
	by Theorem \ref{local law}.
	Again applying Theorem \ref{local law}, we have 
		\[ 
		\frac{m_{sc}}{N}\sum_{i=1}^N \left(G_{ii}\right)^2
		= \frac{m_{sc}}{N}\sum_{i=1}^N \left(v_i + m_{sc}\right)^2 = m_{sc}^3 + {\rm O}_\prec
		\left( \frac{1}{N\eta} \right),
		\text{ and }
		 \sum_{i \neq j} \left|G_{ij}G_{ji}\right| = {\rm O}_\prec\left(\frac{1}{\eta}\right). 
		\]
	Putting together these estimates we have
		\begin{align*}
			&\mathbb{E}\left(\int_{\tau}^5 \sum_{i=1}^N\sum_{j\neq i}^N \frac{m_{sc}^2}{N\left(1 - m_{sc}^2\right)}
			\left( \frac{\left(G_1\right)_{ij} \left(G_1\right)_{ji}}{\left(G_1\right)_{ii}} - 
			\frac{\left(G_2\right)_{ij}\left(G_2\right)_{ji}}{\left(G_2\right)_{ii}} \right){\rm d}\eta
			\right)
			= \mathbb{E} \left( \int_\tau^5 
				{\rm O}_\prec\left( \frac{1}{N^{\frac{1}{2}}\eta} \right) {\rm d}\eta \right) = {\rm o}(1).
		\end{align*}
	\noindent Next, consider
		\begin{equation} 
		\label{2nd term}
		m_{sc}^3 \sum_{i=1}^N \left([v]- \Gamma_i \right)^2 = m_{sc}^3
		\sum_{i=1}^N \left([v]^2 - 2[v]\Gamma_i + \Gamma_i^2\right).
		\end{equation}
	By Theorem \ref{local law}, $[v] = {\rm O}_\prec\left ( \frac{1}{N\eta}\right)$,
	so summing over $i$ and integrating with respect to $\eta$, we find
		\[ \mathbb{E} \left(\int_\tau^5 \sum_i \frac{m_{sc}^3}{1- m^2_{sc}} [v]^2 {\rm d}\eta\right) = 
		\mathbb{E} \left( \int_\tau^5 {\rm O}_\prec\left( \frac{1}{N\eta^{\frac{5}{2}}} \right)\right) = {\rm O}
		\left( \frac{N^{ \frac{3\epsilon}{2} +\gamma }}{N} \right)\]
	for any $\gamma > 0$.
	Next, we estimate $\mathbb{E} \left(m_{sc}^3\sum_i\Gamma_i^2\right)$. Expanding $\Gamma_i^2$, we have
		\begin{equation}
		\label{Gammai}
		 \Gamma_i^2 =  W_{ii}^2 + \left( \frac{1}{N} \sum_{j \neq i} \frac{G_{ij}G_{ji}}{G_{ii}} \right)^2
		+ Z_i^2 + 2\left( \frac{W_{ii}}{N} \sum_{j \neq i} \frac{G_{ij}G_{ji}}{G_{ii}} -
		W_{ii}Z_i - \frac{Z_i}{N} \sum_{j \neq i} \frac{G_{ij}G_{ji}}{G_{ii}} \right).
		\end{equation}
	By definition, we have $\mathbb{E}\left(W_{ii}^2\right) = \frac{1}{N}$. Therefore
	$\mathbb{E}\left(\left(W_1\right)_{ii}^2 - \left(W_2\right)_{ii}^2\right) = 0$, and by Theorem \ref{local law}, we have
		\[ \sum_{i=1}^N m_{sc}^3 \left( \frac{1}{N} \sum_{j \neq i} \frac{G_{ij}G_{ji}}{G_{ii}} \right)^2 =
			{\rm O}_\prec\left( \frac{1}{N\eta^2} \right).\]
	Next, we examine $\mathbb{E}\left(\sum_{i=1}^N Z_i^2\right)$. Note that by the independence of 
	$w_i(l)$ and $w_i(k)$ and the independence of $w_i$ and $G^{(i)}$, we have
		\[ \mathbb{E}_i \left( \left< w_i, G^{(i)}w_i\right>\right) = \mathbb{E}_i \left( \sum_{k, l} G_{kl}^{(i)} w_i(l)\overline{w_i(k)}\right) =
		\mathbb{E}_i\left(\sum_{k=1}^N G_{kk}^{(i)} \overline{w_i^2(k)} \right)
			= \frac{1}{N} \text{Tr}\left(G^{(i)}\right). \]
	Therefore,
		\begin{equation}
		\label{zi2}
		 \mathbb{E} \left(\sum_{i=1}^N Z_i^2\right) = 
		 \sum_{i=1}^N
		 \mathbb{E}_{W^{(i)}} \left( \mathbb{E}_i \left( \left(\left<w_i, G^{(i)}w_i\right>^2\right) -
			\left( \frac{1}{N} \text{Tr}\left(G^{(i)}\right) \right)^2 \right) \right).
		\end{equation}
	Expanding the first term on the left hand side above, we have
		\begin{equation}
		\label{cases} \mathbb{E}_i   \left(\left<w_i, G^{(i)}w_i\right>^2\right) = 
			\mathbb{E}_i \left( \sum_{k, l, k', l'} G_{kl}^{(i)}w_i(l)\overline{w_i(k)} G_{k'l'}^{(i)}w_i(l')\overline{w_i(k')}\right).
		\end{equation}
	The only terms which contribute to this sum are
	those for which at least two pairs of the indices amongst $k, k', l, l'$ coincide. Consider first the case
	$k = l$, $k' = l'$, $k \neq k'$. The contribution of these terms to the above sum is
		\[ \mathbb{E}_i\left(\sum_{k\neq l} G_{kk}^{(i)} G_{ll}^{(i)} \left|w_i(k)\right|^2\left|w_i(l)\right|^2\right) = 
		\left(\frac{1}{N} \text{Tr}\left(G^{(i)}\right)\right)^2
			- \frac{1}{N^2} \sum_{k=1}^N \left(G^{(i)}_{kk}\right)^2.\]
	The first term on the right hand side here cancels the second term on the right hand side of (\ref{zi2}). 
	For the second term, by Theorem \ref{local law}, we have
		\begin{equation}
		\label{blah}
		 \frac{1}{N^2} \sum_{i=1}^N\sum_{k=1}^N \left(\left(G_1^{(i)}\right)_{kk}^2 - 
			\left(G_2^{(i)}\right)_{kk}^2\right) = {\rm O}_\prec\left( \frac{1}{N^{\frac{1}{2}}\eta^{\frac{1}{2}}} \right). 
		\end{equation}  
	Next consider the case where $k = k'$, $l = l'$, $k \neq l$. 
	We consider separately the case where $W$ has real entries,
	and the case where $W$ has complex entries. In the first case, we can assume that the eigenvectors
	of $W$ have real entries. Therefore, by the spectral decomposition of $G$, we have
		\[ \frac{1}{N^2}\sum_{i=1}^N \sum_{k \neq l} \left(G^{(i)}_{kl}\right)^2 =
		\frac{1}{N^2}\sum_{i=1}^N\left( \sum_{k, l} \left(G^{(i)}_{kl}\right)^2 -
		\sum_{k\neq i} \left(G^{(i)}_{kk}\right)^2  \right) =
		 \frac{1}{N^2}\sum_{i=1}^N \sum_{k\neq i}\left( \frac{1}{\left( \lambda_k^{(i)} - z \right)^2} -
			\left(G^{(i)}_{kk}\right)^2   \right). 
		\]
	Using (\ref{complex derivative}) and (\ref{blah}), this gives us
		\[  \frac{1}{N^2}\sum_{i=1}^N \sum_{k \neq l}  \left(\left(G_1^{(i)}\right)_{kl}^2 -
			\left(G_2^{(i)}\right)_{kl}^2\right)  = 
		\OO_\prec\left( \frac{1}{N^{\frac{1}{2}} \eta^2} \right). \]
	If instead $W$ has complex entries, this term is identically zero. Indeed the corresponding expression becomes
		\[ \sum_{i=1}^N \sum_{k \neq l} \left(G^{(i)}_{kl}\right)^2 
		\mathbb{E}_i\left( \left( \overline{w_i(k)} \right)^2\left( w_i(l) \right)^2 \right),\]
	and because we have assumed that
	that for $i \neq j$,
	$W_{ij}$ is of the form $x + {\rm i}y$ where $\mathbb{E}(x) = \mathbb{E}(y) = 0$ and $\mathbb{E}\left(x^2\right)
		= \mathbb{E}\left(y^2\right)$, we have $\mathbb{E}\left(W_{ij}\right)^2 = 0$.
	There remain two cases to consider. Suppose $k' = l$, $l' = k$, $k\neq l$.
	Then 
		\[ \sum_{i=1}^N\mathbb{E}_i \left( \sum_{k \neq l} G^{(i)}_{kl}G^{(i)}_{lk}\left| w_i(k)\right|^2
		\left|w_i(l)\right|^2\right) = \sum_{i} \frac{1}{N^2}\left( 
		\sum_{k, l} G_{kl}^{(i)}G_{lk}^{(i)} - \sum_{k=1}^N \left(G_{kk}^{(i)}\right)^2
		\right), \]
	and we may estimate the difference of this expression at $G_1$ and $G_2$ as we did the first term on the right hand 	side of (\ref{first term of expansion}).
	Lastly, we consider the case $k = k' = l = l'$. 	
	By Definition \ref{def wigner} and Theorem \ref{local law}, there exists a constant $C$ such that
		\begin{equation}
		\label{q}
		 \sum_{i=1}^N \mathbb{E}_i \left(\sum_{k=1}^N \left(G^{(i)}_{kk}\right)^2\left|w_i(k)\right|^4 \right)
		   = C m^2_{sc}(z) + {\rm O}_\prec \left( \frac{1}{N^{\frac{1}{2}}\eta^{\frac{1}{2}}} \right).
		\end{equation}
	Therefore
		\[
		\sum_{i=1}^N \mathbb{E}_i \left(\sum_{k=1}^N \left( G_1^{(i)} \right)_{kk}^2 \left|w_i^{(1)}(k)\right|^4 
		- \left( G_2^{(i)} \right)_{kk}^2 \left|w_i^{(2)}(k)\right|^4
		\right)
		   = Cm_{sc}^2(z) + {\rm O}_\prec \left( \frac{1}{N^{\frac{1}{2}}\eta^{\frac{1}{2}}} \right).
		\]
	In summary,
		\begin{equation} 
		\label{z summary}
		\mathbb{E}\left(\sum_{i=1}^N \left[\left(Z_1\right)_i^2 -
		\left(Z_2\right)_i^2\right]\right) = \OO\left( 1 \right). \end{equation}
Returning to (\ref{Gammai}), by Theorem \ref{local law} we have 
		\[ \mathbb{E} \left(\sum_{i=1}^N \frac{W_{ii}}{N} \sum_{j\neq i} \frac{G_{ij}G_{ji}}{G_{ii}}\right) 
		\leq \sum_{i=1}^N \left(\left(\mathbb{E}\left(W_{ii}^2\right)\right)^\frac{1}{2}
			\left( \mathbb{E} \left( \frac{1}{N} \sum_{j\neq i} \frac{G_{ij}G_{ji}}{G_{ii}} \right)^2  \right)^{\frac{1}{2}}\right)
			= {\rm O}\left( \frac{N^\gamma}{N^{\frac{1}{2}}\eta} \right)\]
	for any $\gamma > 0$.
	We also have that $\mathbb{E}\left( W_{ii}Z_i \right)= 0$.
	To bound the remaining term in (\ref{Gammai}), we 
	first note that using the same argument as we did to prove
	(\ref{z summary}), we have
			\begin{equation}
				\label{|zi|2}
		\mathbb{E}\left(\left|Z_i\right|^2\right) = {\rm O}\left( \frac{1}{N\eta}  \right).
			\end{equation}
	Applying Theorem \ref{local law}, we therefore conclude that
		\[ \mathbb{E}\left(\left|\sum_{i=1}^N \frac{Z_i}{N} \sum_{j\neq i} \frac{G_{ij}G_{ji}}{G_{ii}}\right| \right)= 
		{\rm O}\left( \frac{N^\gamma}{N\eta^2} \right), \]
	for any $\gamma > 0$. Putting together all of our estimates concerning
	(\ref{Gammai}), we have
		\begin{equation}
		\label{gamma summary}
			\mathbb{E} \left(\int_{\tau}^5 \sum_{k=1}^N
			\left( \frac{m_{sc}^3}{1 - m_{sc}^2} \Gamma_{k}^2\right) {\rm d}\eta\right) = \OO(1),
		\end{equation}
		where we used $\frac{m_{sc}^3}{1 - m_{sc}^2}=\OO(1)$.
	Returning to (\ref{2nd term}), by Cauchy-Schwarz and Theorem \ref{local law}
	we have that for any $\gamma > 0$ 
		\[ \mathbb{E} \left(\sum_{i=1}^N m_{sc}^3 [v] \Gamma_i \right)= 
		{\rm O}\left( \frac{N^\gamma}{N^{\frac{1}{2}} \eta^{\frac{3}{2}}} \right). \]
	In total, we have
		\begin{equation} 
			\label{second term summary}
			\mathbb{E} \left(\int_{\tau}^5 \left(\frac{m_{sc}^3}{1 - m_{sc}^2}\right) \sum_{i=1}^N \left( 
			[v]^2 - 2[v]\Gamma_i + \Gamma_i^2\right)
				{\rm d}\eta\right) = \OO\left( 1\right).
		\end{equation} 
	Finally, we have
		\[
			\int_{\tau}^5 \sum_i \left| [v] - \Gamma_i \right|^3 \,\rd \eta = \oo(1)
		\]
	using (\ref{crude estimate for gammai}). \\

	\noindent In summary, we have proved that for $z = \left(E + {\rm i}\eta\right)\in S\left( \frac{1}{100}\right)$, 
	and any $\gamma > 0$,
		\begin{equation}
		\label{estimate up to 1}
		 \mathbb{E} \left(f(z)\right) = \frac{C m^5_{sc}(z)}{1 - m^2_{sc}(z)} + \OO\left( 
			\frac{N^\gamma}{N^{\frac{1}{2}}\eta^{ \frac{5}{2}  }}\right).
		\end{equation}
	In particular, this means that 
		\[ \int_\tau^5 \mathbb{E}\left(f\left( {\rm i}\eta\right)\right){\rm d}\eta = 
			{\rm O}(1). \]
	To complete the proof of this lemma, we need to estimate 
		$ \int_5^{N^\delta} \mathbb{E}\left(f\left( {\rm i}\eta\right)\right){\rm d}\eta$. 
	Let 
		\[ q(z) =\mathbb{E}\left(f(z)\right), \quad \tilde{q}(z) = q\left(\frac{1}{z}\right).\]
	The function $q$ is clearly bounded as $|z|\to\infty$, so $\tilde{q}$ is bounded at $0$, which 
	by Riemann's theorem is therefore a removable singularity. 
	By (\ref{eqn:rigidity}), this means
		\[ \mathbb{P}\left( \tilde{q}(z) \text{ is analytic in } \mathbb{C} \backslash\left\{ 
		\left(-\infty, -\frac{1}{3}\right)\cup \left(\frac{1}{3}, \infty\right) 
		\right\}\right) \geq 1 - N^{-D},
		\]
	and so with overwhelming probability, we can write
		\begin{equation}
		\label{CIF}
			q(z) = \tilde{q}(w) = \frac{1}{2\pi{\rm i}}\int_{\mathcal{C}_\Gamma} \frac{\tilde{q}(\xi)}{\xi - w}\,{\rm d}\xi 
			= - \frac{1}{2\pi\ii}\int_{C_\gamma} \frac{q(\xi)}{\xi - w \xi} \, \rd \xi
		\end{equation}
	where $w = \frac{1}{z}$ and we choose
	$\mathcal{C}_\gamma = \left\{ x + {\rm i}y\,:\, \left|x\right| = 4,\, \left|y\right| = 4 \right\}$
	so that $w$ is inside $C_\Gamma$, and $\tilde{q}$ is analytic there. 
	Now
	we can estimate the right hand side using (\ref{estimate up to 1})
	and (\ref{eqn:rigidity}). Since $\Im(z) > 5$, we have $\sup_{\xi \in C_\gamma} \frac{1}{|\xi - w\xi|} = \OO(1)$.
	Furthermore, for
	$z \in \left[ 4 - \ii \tau, 4 + \ii \tau \right]$, by (\ref{eqn:rigidity}) we have
		\[ \left|f(z)\right| = \left|\sum_{k=1}^N\left( \frac{1}{x_k - z} - \frac{1}{y_k - z}\right)\right|
			= {\rm O}_\prec \left(1\right). 
		\] 
	Therefore, using (\ref{estimate up to 1}), when $\left| \Im (z) \right| > 5$, for any $\gamma > 0$, we have,
		\[ |q(z)| \leq
		\sup_{\xi \in C_\gamma} \frac{1}{|\xi - w\xi|}
		\OO\left(
		\int_{-4}^{4}
			\frac{N^\gamma}{N^\frac{1}{2}} {\rm d}x +
			\int_{\tau}^4
			\frac{N^\gamma}{N^\frac{1}{2}y^{ \frac{5}{2}  }}{\rm d}y
			+ \int_0^{\tau}
			N^\gamma \,{\rm d}y
		\right) = {\rm O}\left( N^{\gamma - \e} 
	\right),
		\]
	and so
		\begin{equation}
		\label{final} \int_5^{N^\delta}\left| \mathbb{E}\left(f(z)\right)\right|{\rm d}\eta = 
			\int_5^{N^\delta} \left( \frac{C\cdot m^5_{sc}(z)}{1 - m^2_{sc}(z)} + \OO\left( 
			N^{\gamma - \e}\right)\right) {\rm d}\eta = 
			\OO\left(1 \right) + \OO\left( N^{\gamma - \e + \delta} \right).
		\end{equation}
	This completes the proof of Lemma \ref{expectation}.
	 \end{proof}

\setcounter{equation}{0}
\setcounter{theorem}{0}
\renewcommand{\theequation}{B.\arabic{equation}}
\renewcommand{\thetheorem}{B.\arabic{theorem}}
\appendix
\setcounter{secnumdepth}{0}
\section[Appendix B\ \ \ Fluctuations of Individual Eigenvalues]
{Appendix B:\ \ \ Fluctuations of Individual Eigenvalues}

In this appendix, we prove
Theorem \ref{fluctuations of individual eigenvalues}. 
The main observation is that the determinant corresponds to linear statistics for the function  ${\Re}\log$, 
while individual eigenvalue fluctuations correspond to the central limit theorem for ${\Im}\log$. We build on this parallel below. The main step is Proposition \ref{eigenvalue counting}, which considers only the case $m = 1$, the proof for the multidimensional central limit theorem being strictly similar.\\

\noindent In analogy with (\ref{log via fundamental theorem}), for any $\eta \geq 0$, define
	\begin{equation}
	\label{def: im log}
		\Im \log \left( E + \ii \eta \right) = \Im \log \left(E + \ii \infty \right)  
		- \int_{\eta}^\infty \Re\left( \frac{1}{E-\ii u} \right) \, \rd u ,
	\end{equation}
with the convention that $\Im \log\left(E + \ii \infty\right) = \frac{\pi}{2}$. 
Then we can write
	\begin{equation}
	\label{def: im log arctan}
		\Im \log \left( E + \ii \eta \right) = \frac{\pi}{2} - \arctan\left( \frac{E}{\eta} \right),
	\end{equation}
and as $\eta \to 0^+$, we have
	\[
		\Im \log(E) = \begin{cases}
			0 & E > 0 \\
			\pi & E < 0. 
		\end{cases}
	\]
\begin{proposition}
\label{eigenvalue counting}
	Let $W$ be a real Wigner matrix satisfying (\ref{subgaussian}). Then with $\Im \log \det (W -E)$ defined
	as
	\[
		\Im \log\left( \det (W-E) \right) = \sum_{k=1}^N \Im \log\left( \lambda_k - E \right),
	\]
	we have
	\begin{equation}
		\frac{\frac{1}{\pi}
			\Im \log\left( \det (W-E) \right) - N\int_{-\infty}^E \rho_{sc}(x)\, \rd x}{\frac{1}{\pi}\sqrt{\log N}} \to \mathscr{N}(0, 1).  
	\end{equation}
	If $W$ is a complex Wigner matrix satisfying (\ref{subgaussian}), then
	\begin{equation}
		\frac{\frac{1}{\pi}
			\Im \log\left( \det (W-E) \right) - N\int_{-\infty}^E \rho_{sc}(x)\, \rd x}{
			\frac{1}{\pi}\sqrt{ \frac{1}{2}\log N}} \to \mathscr{N}(0, 1). 
	\end{equation}
\end{proposition}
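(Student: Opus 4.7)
The plan is to mirror the three-step strategy from Sections \ref{smoothing section}--\ref{conclusion}, replacing the observable $\log|\det(W+\ii\eta_0)|$ by $\Im\log\det(W-E+\ii\eta_0)$ throughout. The key identity, analogous to (\ref{log via fundamental theorem}), follows from $\frac{\rd}{\rd\eta}\Im\log(\lambda-E+\ii\eta) = \Re\frac{1}{\lambda-E-\ii\eta}$:
\[
  \sum_{k=1}^N \Im\log(\lambda_k - E + \ii\eta_0) = \frac{N\pi}{2} - N\int_{\eta_0}^{\infty} \Re\, s_W(E+\ii\eta)\,\rd\eta.
\]
So the problem is again about resolvent integrals on $[\eta_0,\infty)$, and the machinery developed for $\Im\, s_W$ in the main text transfers upon replacing real by imaginary parts.

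First I would establish the analog of Proposition \ref{smoothing}: with $\eta_0$ as in (\ref{def:eta0}), the quantity $\sum_k\Im\log(\lambda_k - E)-\sum_k\Im\log(\lambda_k - E + \ii\eta_0)$, minus a deterministic shift $N\int_0^{\eta_0}\Re\, m_{sc}(E+\ii s)\rd s$, is $\oo(\sqrt{\log N})$ in probability. The decomposition is exactly as in Proposition \ref{smoothing}: the range $[\tilde\eta,\eta_0]$ with $\tilde\eta = (\log N)^{1/4}/N$ is handled by Theorem \ref{schlein} applied to $|\Re(s_W - m_{sc})|\leq|s_W-m_{sc}|$, while for the range $[0,\tilde\eta]$ I would directly study the eigenvalue-by-eigenvalue increment $\phi(\lambda) = \Im\log(\lambda-E+\ii\tilde\eta)-\Im\log(\lambda-E)$, which is bounded by $\pi$ and odd around $\lambda = E$ outside the window $|\lambda - E|<\tilde\eta$; Corollaries \ref{repulsion} and \ref{micro fixed energy} then bound the sum over eigenvalues near $E$.

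With this reduction in hand, the coupling step of Section \ref{DBM} applies verbatim after taking imaginary parts: Lemma \ref{extreme} controls the full complex observable $f_t^{(\nu)}(z)$, and since
\[
  \frac{\rd}{\rd\nu}\sum_k\Im\log\!\bigl(\lambda_k^{(\nu)}(t)-z\bigr) = e^{t/2}\,\Im f_t^{(\nu)}(z),
\]
the analog of Proposition \ref{prop:advection} gives
\[
 \sum_k\Im\log\!\bigl(x_k(\tau)-E-\ii\eta_0\bigr) - \sum_k\Im\log\!\bigl(y_k(\tau)-E-\ii\eta_0\bigr) \approx \sum_k\Im\log\!\bigl(x_k(0)-E-z_\tau\bigr) - \sum_k\Im\log\!\bigl(y_k(0)-E-z_\tau\bigr),
\]
with $z_\tau \sim \ii\tau$. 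The required variance bound, i.e.\ the analog of Proposition \ref{prop:variance} for the observable $\lambda\mapsto \Im\log(\lambda-E-\ii\tau)$, follows from \cite{LytPas2009,KhoKhoPas} combined with (\ref{eqn:rigidity}) exactly as in the original proof, giving variance $\OO(\epsilon\log N)$. The third step, moment matching, is essentially unchanged: the proof of Theorem \ref{4 moment matching theorem} expands $\tr((RU)^mR)$ and applies Theorem \ref{local law}, neither of which distinguishes whether one takes a real or imaginary part at the end, while the representation displayed above expresses $\Im\log$ as an $\int \Re s_W$ integral. Combined with the Gaussian CLT for the eigenvalue counting function of the GOE/GUE \cite{Gus2005,ORo2010}, this concludes the argument.

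The main obstacle I anticipate is the smoothing step, specifically the contribution of eigenvalues within $\eta_0$ of $E$. Unlike in Proposition \ref{smoothing}, where the near-singular terms $\log|y_k|$ were negative and tamed by Corollary \ref{micro fixed energy} on the very small scale $A_N$, each eigenvalue $\lambda_k$ in $(E-\eta_0,E+\eta_0)$ contributes an order-one jump of size close to $\pi/2$, and typically there are $\sim N\eta_0 = e^{(\log N)^{1/4}}\gg \sqrt{\log N}$ such eigenvalues. A naive bound therefore fails and cancellation is essential; I would extract it from the approximate symmetry of the first correlation function around $E$ provided by Theorem \ref{fixed energy thm}, together with rigidity (\ref{eqn:rigidity}) to pair eigenvalues symmetrically across $E$ and absorb their contributions into the deterministic shift $N\int_0^{\tilde\eta}\Re\, m_{sc}(E+\ii s)\rd s$.
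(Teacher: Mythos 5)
Your proposal is correct and follows essentially the same route as the paper's Appendix B: the representation of $\Im\log$ through $\int \Re\, s_W$, the split at $\tilde\eta=(\log N)^{1/4}/N$ with Theorem \ref{schlein} (extended to the real part) handling $[\tilde\eta,\eta_0]$, the coupling via $\Im f_t$ and Lemma \ref{extreme}, the variance bound from \cite{LytPas2009,KhoKhoPas} plus rigidity, and the unchanged moment matching. The cancellation you anticipate near $E$ is exactly what the paper implements by integrating the odd increment $\arctan\bigl(\tilde\eta/(\lambda-E)\bigr)$ against the symmetrized counting function $N_1(x)+N_1(-x)-2N_1(0)$, bounded via Corollary \ref{repulsion} below the scale $e^{C(\log\log N)^2}/N$ and via rigidity above it.
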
	

\noindent Before proving Proposition \ref{eigenvalue counting}, 
we prove Lemma \ref{eigenvalue counting to individual fluctuations} which establishes Theorem \ref{fluctuations of individual eigenvalues} with $m = 1$, assuming Proposition \ref{eigenvalue counting}.
\begin{lemma} Proposition \ref{eigenvalue counting} and Theorem \ref{fluctuations of individual eigenvalues} are 
equivalent. 
\label{eigenvalue counting to individual fluctuations}
\end{lemma}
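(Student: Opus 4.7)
The bridge between the two statements is the elementary identity
\[
\frac{1}{\pi}\,\Im\log\det(W-E) \;=\; \mathcal N(E) \,:=\, |\{k : \lambda_k \leq E\}|,
\]
which follows from the boundary value $\lim_{\eta\to 0^+}\Im\log(x+\ii\eta) = \pi\cdot\mathbf 1_{x<0}$ implicit in (\ref{def: im log arctan}). In these terms Proposition \ref{eigenvalue counting} is exactly a CLT for the centered eigenvalue counting function, and the plan is to translate it to (and from) the fluctuation statement for $\lambda_k$ via the standard order-statistic duality $\{\lambda_k\leq E\} = \{\mathcal N(E)\geq k\}$.

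For the direction Proposition \ref{eigenvalue counting} $\Rightarrow$ Theorem \ref{fluctuations of individual eigenvalues} (case $m=1$), I fix $k=k(N)$ with $k/N \to a \in (0,1)$, set
\[
\sigma_k \,=\, \sqrt{\frac{4\log N}{\beta\,(4-\gamma_k^2)\,N^2}},\qquad E_N(\xi) \,=\, \gamma_k + \sigma_k\,\xi,
\]
and Taylor expand the defining relation (\ref{quantiles}) of $\gamma_k$ about $E=\gamma_k$ to obtain
\[
N\!\int_{-\infty}^{E_N(\xi)}\!\rho_{sc}(x)\,\mathrm dx \,=\, k \,+\, N\rho_{sc}(\gamma_k)\,\sigma_k\,\xi \,+\, O(N\sigma_k^2).
\]
Using $\rho_{sc}(\gamma_k) = \tfrac{1}{2\pi}\sqrt{4-\gamma_k^2}$, the linear coefficient simplifies to $N\rho_{sc}(\gamma_k)\sigma_k = \pi^{-1}\sqrt{\log N/\beta}$, while the remainder is $O(\log N/N)$. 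Combined with the duality this yields
\[
\P\!\left(\frac{\lambda_k-\gamma_k}{\sigma_k}\leq \xi\right) = \P\!\left(\mathcal N(E_N(\xi))\geq k\right) = \P\!\left(\frac{\mathcal N(E_N(\xi)) - N\!\int_{-\infty}^{E_N(\xi)}\!\rho_{sc}(x)\,\mathrm dx}{\pi^{-1}\sqrt{\log N/\beta}} \,\geq\, -\xi + o(1)\right),
\]
which by Proposition \ref{eigenvalue counting} converges to $1-\Phi(-\xi) = \Phi(\xi)$, as required.

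For the reverse implication I would run the same argument backwards: given $E\in[-2+\kappa,2-\kappa]$ I set $k_N = \lfloor N\!\int_{-\infty}^E\!\rho_{sc}(x)\,\mathrm dx\rfloor$ so that $\gamma_{k_N} = E + O(1/N)$, and convert the CLT for $(\lambda_{k_N}-\gamma_{k_N})/\sigma_{k_N}$ into a CLT for $\mathcal N(E)$ through the same linearization. The main technical issue in both directions is to verify that the Taylor remainder $O(N\sigma_k^2) = O(\log N/N)$ and the $O(1/N)$ shift between $E$ and $\gamma_{k_N}$ produce only a deterministic $o(1)$ correction after division by $\sqrt{\log N}$, and that no eigenvalue lies in the tiny interval by which the endpoint is perturbed with non-negligible probability; both points are controlled by the rigidity estimate (\ref{eqn:rigidity}). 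The multidimensional extension referenced by the paper follows from the same linearization applied coordinate-by-coordinate, with the covariance $1 - \max\theta_k$ in Theorem \ref{fluctuations of individual eigenvalues} matching the log-correlation structure of $\mathcal N$ at the energies $E^{(i)}_N$.
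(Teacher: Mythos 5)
Your proposal is correct and follows essentially the same route as the paper: the counting-function/order-statistic duality $\{\lambda_k\leq E\}=\{\mathcal N(E)\geq k\}$ combined with the Taylor expansion of the semicircle quantile relation (\ref{quantiles}), which turns the threshold into $-\xi+\oo(1)$ after the normalization $N\rho_{sc}(\gamma_k)\sigma_k=\pi^{-1}\sqrt{\log N/\beta}$ — exactly the paper's "main observation" and subsequent expansion. The extra remarks on the reverse direction and on rigidity are harmless refinements of the same argument.
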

\begin{proof}
We discuss the real case, the complex case being identical. We use the notation
	$$
		X_k = \frac{\lambda_k - \gamma_k}{ \sqrt{ \frac{4\log N}{\left(4-\gamma_k^2\right)N^2}  }  },
		\quad
		Y_k(\xi) = \left| \left\{ j\,:\, \lambda_j \leq \gamma_k +
			\xi \sqrt{ \frac{4\log N}{\left(4-\gamma_k^2\right)N^2}  } \right\} \right|,
	$$
with $X_k$ as in (\ref{def: X_i}). Let
	\[
		e\left(Y_k(\xi)\right) = 
		N\int_{-2}^{\gamma_k + \xi\sqrt{ \frac{4\log N}{\left(4-\gamma_k^2\right)N^2}  }} \rho_{sc}(x) \, \rd x,
		\quad
		v\left( Y_k(\xi) \right) = \frac{1}{\pi}\sqrt{\log N }.
	\]
The main observation is that 
	\begin{align*}
		\P \left( X_k < \xi\right) = 
		\P \left( Y_k(\xi) \geq k \right) = \P \left( \frac{Y_k(\xi) - e\left(Y_k(\xi)\right)}{v\left( Y_k(\xi\right)} \geq 
			\frac{k - e\left(Y_k(\xi)\right)}{v\left( Y_k(\xi\right)} \right).
	\end{align*}
	Now observe that by (\ref{quantiles}),
	\[
		N\int_{-2}^{\gamma_k + \xi \sqrt{ \frac{4\log N}{\left(4-\gamma_k^2\right)N^2}  }} \rho_{sc}(x) \, \rd x = k + \frac{\xi}{\pi}\sqrt{\log N} + \oo\left( 1 \right).
	\]
	This proves the claimed equivalence.
\end{proof}
\noindent The proof of Proposition \ref{eigenvalue counting} closely follows the proof of Theorem \ref{main theorem}.
In particular, the proof proceeeds by comparison with GOE and GUE. In the following, we first state what is known in the GOE
and GUE cases. Then we indicate the modifications to the proof of Theorem \ref{main theorem} required to establish
Proposition \ref{eigenvalue counting}. 

\subsection{The GOE and GUE cases. }
Gustavsson \cite{Gus2005} first established the following central limit theorem in the GUE case, and O'Rourke \cite{ORo2010} established
the GOE case. Here the notation $k(N) \sim N^\theta$ is as in (\ref{gustavsson sim}).
\begin{theorem}[Theorem 1.3 in \cite{Gus2005}, Theorem 5 in \cite{ORo2010}]
Let $\lambda_1 < \lambda_2 < \dots < \lambda_N$ be the eigenvalues of a GOE (GUE) matrix.
Consider $\left\{ \lambda_{k_i} \right\}_{i=1}^m$ such that $0 < k_{i+1} - k_i \sim N^{\theta_i}$ \nc,
$0 < \theta_i \leq 1$, and $k_i/N \to a_i \in (0,1)$ as $N \to \infty$. With $\gamma_k$ as in (\ref{quantiles}),
let
	\[
		X_i = \frac{\lambda_{k_i} - \gamma_{k_i}}{\sqrt{ \frac{4\log N}{\beta \left(4 - \gamma_{k_i}^2 \right) N^2}  }},
		\quad i = 1, \dots, m,
	\] 
where $\beta = 1,2$ corresponds to the GOE, GUE cases respectively. Then as $N \to \infty$,
	\[
		\P\left\{ X_1 \leq \xi_1, \dots, X_m \leq \xi_m \right\} \to \Phi_{\Lambda}\left(\xi_1, \dots, \xi_m \right),
	\]
where $\Phi_\Lambda$ is the cumulative distribution function for the $m$-dimensional normal distribution with
covariance matrix $\Lambda_{i,j} = 1 - \max\left\{ \theta_k: i \leq k < j < m \right\}$ if $i < j$, and $\Lambda_{i,i} = 1$.
\end{theorem}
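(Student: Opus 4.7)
The plan is to follow the strategy pioneered by Gustavsson: reduce the CLT for an individual eigenvalue $\lambda_{k_i}$ to a CLT for the counting function $\mathcal{N}_N(E) := \#\{j : \lambda_j \leq E\}$ via the elementary identity $\{\lambda_k \leq E\} = \{\mathcal{N}_N(E) \geq k\}$, and then exploit the determinantal (resp. pfaffian) structure of the GUE (resp.\ GOE) to establish the counting-function CLT.

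First, using (\ref{quantiles}) and expanding $\int_{-2}^{\gamma_k + \sigma_k \xi} \rho_{sc}$ to first order in $\sigma_k := \sqrt{4\log N / (\beta(4-\gamma_k^2)N^2)}$, one gets
\[
N\int_{-2}^{\gamma_k + \sigma_k \xi} \rho_{sc}(x)\,\rd x - k \;=\; \xi \sqrt{\tfrac{\log N}{\beta \pi^2}} + \oo\bigl(\sqrt{\log N}\bigr),
\]
so that $\{X_i \leq \xi_i\} = \{\mathcal{N}_N(\gamma_{k_i} + \sigma_{k_i} \xi_i) \geq k_i\}$ becomes, after centering by $\mathbb{E}\mathcal{N}_N$ and rescaling by $\sqrt{\log N/(\beta \pi^2)}$, the statement that a standardized $\mathcal{N}_N$ exceeds $-\xi_i$. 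It therefore suffices to prove $(\mathcal{N}_N(E) - \mathbb{E}\mathcal{N}_N(E))/\sqrt{\log N/(\beta \pi^2)} \to \mathscr{N}(0,1)$ uniformly for bulk $E$, and the analogous joint statement for $E_1 < \cdots < E_m$ with the prescribed covariance.

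For the GUE the eigenvalues form a determinantal point process with the Hermite kernel $K_N$. I would invoke the Hough--Krishnapur--Peres--Vir\'ag theorem: the number of points in any Borel set $B$ has the law of $\sum_j \mathrm{Ber}(\mu_j)$, where the Bernoulli variables are independent and $(\mu_j)$ are the eigenvalues of the trace-class operator $\mathbf 1_B K_N \mathbf 1_B$. The mean $\sum \mu_j = \int_B K_N(x,x)\rd x$ reproduces $N\int_B \rho_{sc}$ by Plancherel--Rotach asymptotics; the variance $\sum \mu_j(1-\mu_j)$ is computed, via the sine-kernel asymptotics of $K_N$ in the bulk, to equal $\frac{1}{2\pi^2}\log N + \OO(1)$ (the classical Costin--Lebowitz calculation). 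Since $\mathcal{N}_N(E)$ is then a sum of independent uniformly bounded variables with divergent variance, the Lindeberg--Feller CLT applies. For the joint statement I apply the same reduction to the disjoint intervals $(-\infty, E_1], (E_1, E_2], \ldots$: each count is a sum of Bernoullis, the counts in disjoint sets are independent (because the kernel is Hermitian and the associated Bernoulli decomposition respects the decomposition of $\mathbf{1}_{(-\infty,E_m]}$ into disjoint projections), and joint Gaussianity follows.

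The GOE case I would handle via the Forrester--Rains interlacing identity, which asserts that the union of the spectra of independent $\mathrm{GOE}_N$ and $\mathrm{GOE}_{N+1}$ matrices has the law of the $\mathrm{GUE}_{2N+1}$ spectrum; this doubles the variance and yields the advertised factor $1/\beta$. The covariance structure in the multidimensional statement is read off from the key variance asymptotic
\[
\mathrm{Var}\!\left(\mathcal{N}_N(E_i) - \mathcal{N}_N(E_j)\right) \;=\; \frac{\log(N|E_i - E_j|)}{\beta \pi^2}\,(1+\oo(1))
\]
valid for $|E_i - E_j| \gg 1/N$. The gap assumption $k_{j} - k_{i} \sim N^{\theta}$ with $\theta=\max\{\theta_\ell:i\leq \ell <j\}$ forces $|E_j - E_i| \sim N^{\theta-1}/\rho_{sc}(\gamma_{k_i})$, so the normalized covariance equals $1 - \theta$, matching $\Lambda_{ij}$. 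The main obstacle is this precise variance asymptotic: one needs sine-kernel asymptotics uniformly on mesoscopic scales in order to separate $\log N$ from $\log(N|E_i-E_j|)$, and one needs sufficient uniformity in $E$ over the bulk so that the expansion in the first step composes with the counting-function CLT to deliver the rescaled Gaussian limit.
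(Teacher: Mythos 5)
You should note at the outset that the paper does not prove this statement at all: it is quoted as known input (Theorem 1.3 of Gustavsson, Theorem 5 of O'Rourke) for the comparison argument of Appendix B, so your proposal is in effect a reconstruction of those original proofs. The strategy you outline — reduce $\{\lambda_k \le E\}$ to $\{\mathcal{N}_N(E) \ge k\}$, prove a CLT for the counting function using the determinantal structure of the GUE (Costin--Lebowitz/Soshnikov, or equivalently the HKPV Bernoulli decomposition), and transfer to the GOE via Forrester--Rains interlacing — is indeed the route of those references, and your one-dimensional GUE argument (marginal Bernoulli representation, variance $\tfrac{1}{2\pi^2}\log N$, Lindeberg--Feller) is essentially sound modulo the uniform Plancherel--Rotach/sine-kernel asymptotics you yourself flag.

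The multidimensional part, however, has genuine gaps. First, the claim that counts of a determinantal process in disjoint sets are independent is false: the HKPV theorem represents the \emph{marginal} law of $\mathcal{N}(B)$ for each fixed Borel set $B$ as a sum of independent Bernoullis; it provides no joint independent coupling across disjoint sets. Indeed, if $\mathcal{N}((E_i,E_j])$ were independent of $\mathcal{N}(E_i)$, then $\mathrm{Cov}(\mathcal{N}(E_i),\mathcal{N}(E_j))=\mathrm{Var}(\mathcal{N}(E_i))\sim \frac{\log N}{\beta\pi^2}$ and the normalized covariance would tend to $1$, contradicting the very covariance $\Lambda_{ij}=1-\theta$ you aim for; the nontrivial correlation comes precisely from the dependence between disjoint counts, and the correct route is a joint CLT for counts in several intervals (multi-interval Costin--Lebowitz--Soshnikov, or joint cumulant estimates) with covariances computed from the kernel. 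Second, your key variance asymptotic is off by a factor of $2$: for an interval with both endpoints in the bulk, the Dyson--Mehta number variance is $\frac{2}{\beta\pi^2}\log\bigl(N|E_i-E_j|\bigr)\,(1+\oo(1))$ (each bulk endpoint contributes $\frac{1}{\beta\pi^2}\log$), whereas the one-sided count $\mathcal{N}(E)$ has variance $\frac{1}{\beta\pi^2}\log N$; with your stated constant, the polarization identity yields a normalized covariance $1-\theta/2$, not $1-\theta$. Third, the Forrester--Rains identity is misquoted: the superposition of independent $\mathrm{GOE}_N$ and $\mathrm{GOE}_{N+1}$ spectra is not distributed as a $\mathrm{GUE}_{2N+1}$ spectrum; rather, retaining every second point of that superposition produces $\mathrm{GUE}_N$, and O'Rourke's transfer (with the resulting change of variance for $\beta=1$) relies on this decimation together with control of the interlacing error. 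As written, therefore, neither the covariance matrix $\Lambda$ nor the GOE case is actually established.
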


\noindent By Lemma \ref{eigenvalue counting to individual fluctuations}, the real (complex) case in Proposition \ref{eigenvalue counting}
holds for the GOE (GUE) case. Therefore we can prove Proposition \ref{eigenvalue counting} by comparison, 
presenting only what differs from the proof of Theorem \ref{main theorem}. We only consider the real case, the proof in the complex case being similar. Each step below corresponds to a section in our proof of Theorem  \ref{main theorem}.

\subsection{Step 1: Initial Regularization. }
\begin{proposition}\label{smoothing im log}
		Let 
		$y_1 < y_2 < \dots < y_N$ denote the eigenvalues of a Wigner matrix satisfying (\ref{subgaussian}). Set
		\begin{equation*}
		g(\eta) =\Im \sum_{k} \left(\log \left(y_k+\ii\eta\right) - \log y_k \right)-\int_0^{\eta}N\Re \left(m_{sc}(\ii s)\right)\rd s, 
		\end{equation*}
		and recall
		$ \eta_0 =\frac{e^{ \left(\log N\right)^{ \frac{1}{4}}}}{N}$.
		Then 
		$g\left(\eta_0\right)$ converges to 0 in probability as $N\to\infty$.
\end{proposition}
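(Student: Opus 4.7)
The approach parallels the proof of Proposition~\ref{smoothing}, with one structural simplification coming from the fact that $m_{sc}(is) = \tfrac{i}{2}(\sqrt{s^2+4}-s)$ is purely imaginary, so $\Re m_{sc}(is)\equiv 0$ and the deterministic counterterm in the definition of $g$ vanishes identically. Using $\tfrac{d}{ds}\Im\log(y_k+is) = \Re(y_k+is)^{-1} = y_k/(y_k^2+s^2)$, this yields the clean identity
\[
g(\eta) \;=\; \int_0^\eta N\,\Re\bigl(s_W(is) - m_{sc}(is)\bigr)\,ds \;=\; \sum_k \sgn(y_k)\arctan(\eta/|y_k|).
\]

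The first step is to introduce the intermediate scale $\tilde\eta = d_N/N$ with $d_N = (\log N)^{1/4}$, exactly as in the proof of Proposition~\ref{smoothing}, and to estimate $g(\eta_0)-g(\tilde\eta)$ via Theorem~\ref{schlein}. Since $|\Re(s_W-m_{sc})(is)|\leq |(s_W-m_{sc})(is)|$, the sharp local law combined with the $q=2$ tail integration carried out in~(\ref{est1}) yields $\E|g(\eta_0)-g(\tilde\eta)| = O(\log(\eta_0/\tilde\eta)) = O((\log N)^{1/4})$.

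The second step is to estimate $g(\tilde\eta) = \sum_k \sgn(y_k)\arctan(\tilde\eta/|y_k|)$ by splitting the eigenvalues into three regions by magnitude. For the microscopic region $|y_k|<A_N$, with $A_N = b_N/N$ and $b_N = e^{-(\log N)^{1/4}}$, Corollary~\ref{micro fixed energy} shows that $\{y_k:|y_k|<A_N\}$ is empty with probability tending to $1$. For the mesoscopic window $A_N \leq |y_k|\leq \tilde\eta$, the pointwise bound $|\arctan(\cdot)|\leq \pi/2$ combined with Corollary~\ref{repulsion} controls the expected number of contributors by $O(d_N)$. For the bulk $|y_k|>\tilde\eta$, we bound $|\arctan(\tilde\eta/|y_k|)|\leq \tilde\eta/|y_k|$ and integrate by parts against Corollary~\ref{repulsion}, in direct analogy with the $\tilde\eta^2/y_k^2$ estimate~(\ref{eqn:est2}).

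The main obstacle is that the naive absolute-value bounds just described only produce an $O((\log N)^{1/4})$ contribution and hence only yield $g(\eta_0) = o_\P(\sqrt{\log N})$, which is already sufficient for the central limit theorem in Proposition~\ref{eigenvalue counting} but falls short of the $o_\P(1)$ stated. To sharpen to $o_\P(1)$ one exploits the oddness of $\arctan$ via a pairing argument: each eigenvalue $y_{N/2+j}$ is paired with its near-mirror $y_{N/2-j}$, and the rigidity estimate~(\ref{eqn:rigidity}) gives $|y_{N/2+j}+y_{N/2-j}| = O(N^{-1+\gamma})$, so that $\arctan(\tilde\eta/y_{N/2+j}) + \arctan(\tilde\eta/y_{N/2-j})$ is of order $\tilde\eta N^{-1+\gamma}/(y_{N/2+j}^2+\tilde\eta^2)$ by the mean value theorem. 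Summing over pairs and integrating against Corollary~\ref{repulsion} collapses this to an $o_\P(1)$ quantity; the improved control of sums of two symmetric eigenvalue fluctuations (versus single fluctuations) is the technical heart of this final step.
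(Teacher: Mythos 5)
Your reduction to the arctan identity (using $\Re\, m_{sc}(\ii s)=0$) and your first step, passing to the scale $\tilde\eta=(\log N)^{1/4}/N$ via Theorem \ref{schlein}, coincide with the paper. The argument breaks down, however, at the bulk region $|y_k|>\tilde\eta$: the absolute-value bound $\arctan(\tilde\eta/|y_k|)\le\tilde\eta/|y_k|$ decays only linearly in $|y_k|$, so its expected sum is of order $N\tilde\eta\int_{\tilde\eta}^{2}y^{-1}\rho_{sc}(y)\,\rd y\asymp N\tilde\eta\log(1/\tilde\eta)\asymp(\log N)^{5/4}$, not $O((\log N)^{1/4})$. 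The analogy with (\ref{eqn:est2}) fails precisely because there the summand is $\tilde\eta^2/y_k^2$ and the integral converges at the upper limit, while here it produces an extra factor $\log(1/\tilde\eta)\asymp\log N$. Hence triangle-inequality bounds do not even give ${\rm o}_{\P}(\sqrt{\log N})$; the cancellation between the two halves of the spectrum is indispensable already at that stage, not an optional refinement. The paper implements it by writing the sum over $|y_k|>b_N$ as $\int_{b_N}^{10}\arctan(\tilde\eta/x)\,\rd\bigl(N_1(x)+N_1(-x)-2N_1(0)\bigr)$, integrating by parts, and bounding $\E\left|N_1(x)+N_1(-x)-2N_1(0)\right|$ by $CNx+\delta$ up to the scale $e^{C(\log\log N)^2}/N$ (Corollary \ref{repulsion}) and by $e^{C(\log\log N)^2}$ beyond (rigidity (\ref{eqn:rigidity})), which yields ${\rm o}(\sqrt{\log N})$.

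Your pairing refinement does not repair this, and its goal is in any case out of reach. Summing the mean-value bound $\tilde\eta\,|y_{N/2+j}+y_{N/2-j}|/(y_{N/2+j}^2+\tilde\eta^2)$ with the rigidity input $|y_{N/2+j}+y_{N/2-j}|=O(N^{-1+\gamma})$ over the bulk gives $\tilde\eta N^{-1+\gamma}\sum_j(\gamma_{N/2+j}^2+\tilde\eta^2)^{-1}\asymp N^{\gamma}$, not ${\rm o}(1)$; and for the $\asymp(\log N)^{1/4}$ pairs within distance $\tilde\eta$ of the origin the bound $N^{-1+\gamma}/\tilde\eta=N^{\gamma}/(\log N)^{1/4}$ is worse than the trivial bound $\pi$, so rigidity (whose precision, even in the form $e^{C(\log\log N)^2}/N$, is far coarser than $\tilde\eta$) cannot push these pairs below a total of order $(\log N)^{1/4}$. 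In fact $g(\eta_0)$ should not tend to $0$ in probability at all: it is essentially $\pi$ times the signed imbalance of the eigenvalue count at distance $\le\eta_0$ from the origin, whose fluctuations are of order $\sqrt{\log(N\eta_0)}\asymp(\log N)^{1/8}$. The statement is to be read, as in Proposition \ref{smoothing}, with the $\sqrt{\log N}$ normalization; the paper's proof establishes exactly $g(\eta_0)={\rm o}_{\P}(\sqrt{\log N})$, which is all that the conclusion of Appendix B uses. So what is missing from your write-up is not the sharpening to ${\rm o}_{\P}(1)$ but the symmetrization needed to control the bulk term at all.
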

\begin{proof}
Again, we choose $
		\tilde{\eta} = \frac{c_N}{N}= \frac{ \left(\log N\right)^{\frac{1}{4}} }{N}.
	$
Then
	\begin{align*}
		\E \left| g\left(\eta_0\right) - g\left(\tilde{\eta}\right) \right| 
		\leq \E \int_{\tilde{\eta}}^{\eta_0} N\left| \Re\left( s(\ii u) \right) - \Re\left(m_{sc}\left(\ii u \right) \right)\right| \, \rd u.
	\end{align*}
 Theorem \ref{schlein} holds whether we consider $s$ or $\Im \left(s\right)$, so that exactly the same argument as previously shows
$\E \left| g\left(\eta_0\right) - g\left(\tilde{\eta}\right) \right| = \oo\left( \sqrt{\log N} \right)$. 

\noindent Next define $b_N = \frac{e^{-\left( \log N \right)^\frac{1}{8}}}{N}$. 
As $b_N$ is below the microscopic scale, by Corollary \ref{micro fixed energy}, $$\sum_{\left| x_k \right| \leq b_N} \left( \Im \log \left( x_k + \ii \tilde{\eta} \right) - \Im \log \left( x_k \right) \right)$$ converges to 0 in probability, as the probability it is an empty sum converges to 1.\\

\noindent Consider now
\begin{equation}\label{2ndterm}
		\sum_{\left| x_k \right| > b_N} \left( \Im \log \left( x_k + \ii \tilde{\eta} \right) - \Im \log \left( x_k \right) \right).
\end{equation}
Let $N_1(u) = \left| \left\{ x_k \leq  u \right\} \right|$
and note that
	\[
		\Im \log \left( x \right) - \Im \log \left( x + \ii \tilde{\eta} \right) = 
		\int_0^{\tilde{\eta}} \Re\left( \frac{1}{x - \ii u} \right) \, \rd u
		= \arctan\left( \frac{\tilde{\eta}}{x} \right).
	\]
To prove (\ref{2ndterm}) is negligible, it is therefore enough to bound $\E(|X|)$ where 
	\begin{align*}
		X = \int_{b_N \leq |x| \leq 10} \arctan\left( \frac{\tilde{\eta}}{x} \right) \, \rd N_1(x)
		    &= \int_{b_N}^{10} \arctan\left( \frac{\tilde{\eta}}{x} \right) \, \rd (N_1(x)+N_1(-x)-2N_1(0)).
	\end{align*}
After integration  by parts,  the boundary terms are $\oo(1)$ and 
	$$
	\tilde{\eta}\int_{b_N}^{10}\frac{\E(|N_1(x)+N_1(-x)-2N_1(0)|)}{x^2+\tilde{\eta}^2} \, \rd x
	$$
	remains.
Split the above integral into integrals over $[b_N,a]$ and $[a,10]$ where $a=\exp(C(\log \log N)^2)/N$ for a large enough $C$.
On the first domain, Corollary \ref{repulsion}  gives the bound $\E(|N_1(x)+N_1(-x)-2N_1(0)|)\leq C Nx+\delta$ for any small $\delta>0$.
On the second domain,  by rigidity \cite{ErdYauYin2012Rig} we have $|N_1(x)+N_1(-x)-2N_1(0)|\leq \exp(C(\log \log N)^2)$, so that the contribution 
from this term is also $\oo\left( \sqrt{\log N} \right)$. 
\end{proof}

\subsection{Step 2: Coupling of Determinants. }
With the notation of Section \ref{DBM} we have,
	\[
		e^{t/2}\Im\left( f_t\left( \ii \eta_0 \right) \right) = 
		\frac{\rd}{\rd \nu} \sum_{k=1}^N \left( \Im \log\left( \lambda_k^{(\nu)}(t) + \ii \eta_0 \right) \right).
	\]
We can therefore proceed in the same way as Proposition \ref{prop:advection} to prove the following.
	\begin{proposition}
	\label{prop:advection im}
	Let $\epsilon >0$, $\tau = N^{-\epsilon}$ and let $z_\tau$ be as in (\ref{def:ztau}) with $z = {\rm i}\eta_0$.
	Let 
		\[
			g(t,\eta) = \sum_k \left(\Im \log \left( x_k(t) + {\rm i}\eta \right) - \Im \log \left(y_k(t) + {\rm i}\eta\right)  \right)
		\]
	Then for any $\delta > 0$, 
		$ \lim_{N\to\infty} \mathbb{P}\left(\left|
		g\left(\tau, \eta_0\right) - g\left(0, z_\tau\right)\right| > \delta  \right) = 0.$
	\end{proposition}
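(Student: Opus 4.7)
The plan is to mimic the proof of Proposition \ref{prop:advection}, replacing $\Re$ by $\Im$ throughout. I would first record the imaginary analog of (\ref{main obs}): since $u_k(t)=\rd\lambda_k^{(\nu)}(t)/\rd\nu$ is real, the $\Im$ passes through the sum, and a direct chain-rule computation yields
\[
\frac{\rd}{\rd\nu}\sum_k \Im\log\bigl(\lambda_k^{(\nu)}(t)+\ii\eta_0\bigr)=e^{t/2}\,\Im f_t^{(\nu)}(\ii\eta_0),
\]
as already flagged in the paragraph preceding the proposition (the sign is pinned down by the conjugation symmetry $f_t^{(\nu)}(\overline z)=\overline{f_t^{(\nu)}(z)}$, valid because the $\lambda_k$ and $u_k$ are real).

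Next I would integrate in $\nu\in[0,1]$, using $\lambda^{(0)}=y$ and $\lambda^{(1)}=x$, and likewise at $t=0$ with $z_\tau$ replacing $\ii\eta_0$, producing
\[
g(\tau,\eta_0)=e^{\tau/2}\,\Im\int_0^1 f_\tau^{(\nu)}(\ii\eta_0)\,\rd\nu,\qquad g(0,z_\tau)=\Im\int_0^1 f_0^{(\nu)}(z_\tau)\,\rd\nu.
\]
Lemma \ref{extreme} applied at $z=\ii\eta_0$ then gives $|f_\tau^{(\nu)}(\ii\eta_0)-f_0^{(\nu)}(z_\tau)|\le \varphi/(N\eta_0)$ with probability at least $1-N^{-D}$, uniformly in $\nu\in[0,1]$. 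Since $\eta_0=e^{(\log N)^{1/4}}/N\gg\varphi/N$, the error term is $\oo(1)$, and hence $g(\tau,\eta_0)=e^{\tau/2}\,g(0,z_\tau)+\oo_\prec(1)$.

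The only step requiring anything beyond a mechanical translation is absorbing the prefactor $e^{\tau/2}-1=\OO(N^{-\epsilon})$. From $g(\tau,\eta_0)-g(0,z_\tau)=(e^{\tau/2}-1)\,g(0,z_\tau)+\oo_\prec(1)$, it suffices to show $g(0,z_\tau)=\OO_\P(N^{\epsilon/2})$. For this I would invoke the imaginary-part analog of Proposition \ref{prop:variance}, proved by the same extension of Lytova--Pastur macroscopic linear-statistics CLTs via the rigidity estimate (\ref{eqn:rigidity}); here $\Im\log$ and $\log|\cdot|$ enter symmetrically through the decomposition $\log(x-\ii\tau)=\log|x-\ii\tau|+\ii\,\Im\log(x-\ii\tau)$, so the argument transports verbatim. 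This yields $\var(g(0,z_\tau))=\OO(\log N)$, and the imaginary analog of Lemma \ref{expectation} controls its mean; Chebyshev then gives $g(0,z_\tau)=\OO_\P(\sqrt{\log N})$, so $(e^{\tau/2}-1)g(0,z_\tau)\to 0$ in probability. Verifying the imaginary-part variance bound is the main (mild) obstacle; everything else is a one-line transcription of the real case.
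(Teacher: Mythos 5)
Your proposal is correct and takes essentially the same route as the paper, which proves this proposition by the same transcription of Proposition \ref{prop:advection}: the identity $e^{t/2}\Im f_t^{(\nu)}(\ii\eta_0)=\frac{\rd}{\rd\nu}\sum_k\Im\log(\lambda_k^{(\nu)}(t)+\ii\eta_0)$, Lemma \ref{extreme} at $z=\ii\eta_0$ with $\varphi/(N\eta_0)=\oo(1)$, and integration over $\nu$ (where the uniformity in $\nu$ is, as in the paper, justified by the Markov/moment remark rather than a literal simultaneous bound). Your explicit absorption of the $e^{\tau/2}-1$ prefactor via the analogs of Lemma \ref{expectation} and Proposition \ref{variance} (Lemmas \ref{expectation im log} and \ref{variance im log}) fills in a detail the paper leaves implicit both here and in Proposition \ref{prop:advection}, and matches how the paper itself controls that quantity in its concluding step.
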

	
\subsection{Step 3: Conclusion of the Proof. } We reproduce the reasonning from (\ref{eqn:1}) to (\ref{eqn:2}) to prove Proposition \ref{eigenvalue counting} in the real symmetric case.
 From \cite{ORo2010} and Proposition \ref{smoothing im log}, for some explicit deterministic  $c_N$ we have

\begin{equation}\label{eqn:11}
\frac{ 			\sum_{k =1}^N \im\log \left(x_k(\tau) + {\rm i} \eta_0\right)+c_N}{\sqrt{\log N}}\to\mathscr{N}(0,1),
\end{equation}
and Proposition \ref{prop:advection im} implies that
$$
\frac{ 			\sum_{k =1}^N \im\log \left(y_k(\tau) + {\rm i} \eta_0\right)+c_N}{\sqrt{\log N}}
+\frac{ 			\sum_{k =1}^N \im\log \left(x_k(0) + z_\tau\right)-\sum_{k =1}^N \im\log \left(y_k(0) + z_\tau\right)}{\sqrt{\log N}}
\to\mathscr{N}(0,1).
$$

	\noindent Lemmas \ref{expectation im log} and  \ref{variance im log} show that the second term above, call it $X$, satisfies $\E(X^2)<C\e$, for some universal $C$. Thus for any fixed smooth and compactly supported function $F$,
	\begin{align*}
	\E \left(F\left(\frac{ 			\sum_{k =1}^N \im\log \left(y_k(\tau) + {\rm i} \eta_0\right)+c_N}{\sqrt{\log N}}\right)\right)&=\E \left(F\left(\frac{ 			\sum_{k =1}^N \im\log \left(x_k(\tau) + {\rm i} \eta_0\right)|+c_N}{\sqrt{\log N}}+X\right)\right)
	+\OO\left(\|F\|_{\rm Lip}(\E\left(X^2\right))^{1/2}\right)\\
	&=\E \left(F(\mathscr{N}(0,1))\right)+\oo(1)+\OO\left(\e^{1/2}\right).
	\end{align*}
With Theorem \ref{4 moment matching theorem} (its proof applies equally to the imaginary part), the above equation implies
$$
\E\left( F\left(\frac{\im\log\det(W+\ii\eta_0)+c_N}{\sqrt{\log N}}\right)\right)=\E \left(F(\mathscr{N}(0,1))\right)+\oo(1)+\OO\left(\e^{1/2}\right),
$$		
and by Proposition \ref{smoothing im log}, we obtain
\begin{equation}\label{eqn:12}
\E \left(F\left(\im \frac{\log \det W+\frac{N}{2}}{\sqrt{\log N}}\right)\right)=\E \left(F(\mathscr{N}(0,1))\right)+\oo(1)+
\OO\left(\e^{1/2}\right).
\end{equation}
Since $\e$ is arbitrarily small, this concludes the proof.

\begin{lemma}
	\label{expectation im log}
	Recall the notation $\tau = N^{-\epsilon}$ and let $\{x_k\}_{k=1}^N$, $\{y_k\}_{k=1}^N$ denote the
	eigenvalues of two Wigner matrices, $W_1$ and $W_2$. Then
	\[
			\lim_{N\to\infty}	\E \left( \sum_{k=1}^N \Im \log \left(x_k + \ii \tau\right)
				- \sum_{k=1}^N \Im \log \left(y_k + \ii \tau\right) \right) = \OO(1).
	\]
\end{lemma}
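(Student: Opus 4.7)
The plan is to mirror the proof of Lemma \ref{expectation} in Appendix A, exploiting a symmetry which makes the deterministic leading term of $\E(s_{W_1}(\ii u) - s_{W_2}(\ii u))$ purely imaginary on the positive imaginary axis. By the fundamental theorem of calculus, since $\tfrac{\rd}{\rd u} \Im \log(x + \ii u) = \Re((x+\ii u)^{-1})$,
\[
\sum_k \Im \log(x_k + \ii \tau) = \sum_k \Im \log(x_k + \ii N^{\delta}) - \int_\tau^{N^{\delta}} N\,\Re(s_{W_1}(\ii u))\, \rd u,
\]
and analogously for $y_k$. The boundary term at scale $N^{\delta}$ is handled exactly as in (\ref{eqn:boundexp}): the map $x\mapsto \Im \log(x+\ii N^{\delta})$ has Lipschitz constant $\OO(N^{-\delta})$ on $|x|\leq 3$, so rigidity (\ref{eqn:rigidity}) and the standard estimate $\E\sum_k|x_k-y_k|=\OO(N^{\gamma})$ give a contribution of order $\OO(N^{\gamma-\delta})$ for any $\gamma>0$. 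What remains is to estimate
\[
\int_\tau^{N^{\delta}} \Re\,\E f(\ii u)\, \rd u, \qquad \text{with } f(z) = N(s_{W_1}(z) - s_{W_2}(z)).
\]

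The key observation driving the rest is that $m_{sc}(\ii u) = \tfrac{\ii}{2}(\sqrt{u^2+4} - u)$ is purely imaginary. Hence $m_{sc}^5(\ii u)$ is purely imaginary while $1 - m_{sc}^2(\ii u) = 1 + |m_{sc}(\ii u)|^2$ is real and positive, so the deterministic main term $Cm_{sc}^5(z)/(1-m_{sc}^2(z))$ appearing in (\ref{estimate up to 1}) is purely imaginary at $z=\ii u$ and drops out upon taking the real part. The errors in the Schur-complement derivation of (\ref{estimate up to 1}) are $\OO$-bounds on complex-valued quantities (they descend from Theorem \ref{local law} applied to diagonal and off-diagonal Green's function entries), so they apply equally to real parts, giving
\[
|\Re\, \E f(\ii u)| = \OO\!\left( \frac{N^\gamma}{N^{1/2} u^{5/2}} \right) \quad \text{for } \tau \leq u \leq 5.
\]
Integration produces $\OO(N^{\gamma - 1/2}\tau^{-3/2}) = \OO(N^{\gamma - 1/2 + 3\epsilon/2})$, which is $\oo(1)$ for $\epsilon$ sufficiently small and $\gamma$ chosen smaller.

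For the tail $5 \leq u \leq N^{\delta}$, the complex-analytic extension argument around (\ref{CIF}) applies verbatim to give $|\E f(z)| = \OO(N^{\gamma-\epsilon})$ uniformly for $|\Im z|>5$; moreover, the explicit contribution $Cm_{sc}^5/(1-m_{sc}^2)$, being purely imaginary on $\ii[5,N^{\delta}]$, integrates to something purely imaginary, so only the error survives upon taking real parts. This yields $\bigl|\int_5^{N^{\delta}} \Re\,\E f(\ii u)\,\rd u\bigr| = \OO(N^{\gamma-\epsilon+\delta}) = \OO(1)$ as soon as $\gamma+\delta < \epsilon$, which can be arranged. Combining the three pieces gives the claimed $\OO(1)$ bound. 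I do not expect a real obstacle here: the only structurally new input beyond Appendix A is the parity argument identifying the leading term as imaginary on the imaginary axis; everything else is a faithful transcription of the real-part version.
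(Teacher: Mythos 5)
Your proposal is correct and follows essentially the same route as the paper, which states that this lemma requires ``only trivial adjustments'' of Lemma \ref{expectation} and leaves the details to the reader: you carry out exactly that transcription (fundamental theorem of calculus with $\Re(s_W)$ in place of $\Im(s_W)$, the rigidity bound for the boundary term at scale $N^{\delta}$, the Schur-complement expansion estimate (\ref{estimate up to 1}), and the contour argument for $5\leq u\leq N^{\delta}$). Your additional parity observation that $Cm_{sc}^5(\ii u)/(1-m_{sc}^2(\ii u))$ is purely imaginary is correct but not actually needed, since the lemma only claims an $\OO(1)$ bound and that deterministic term already integrates to $\OO(1)$ exactly as in Appendix A.
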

\noindent The proof of this lemma requires only trivial adjustments of the proof of Lemma \ref{expectation}, details are left to the reader.
Finally, we also have the following bound on the variance.

\begin{lemma}
	\label{variance im log} Recall the notation $\tau = N^{-\epsilon}$ and let $\{x_k\}_{k=1}^N$, denote the
	eigenvalues of a Wigner matrix $W$. Then there exists $\e_0>0$ such that for any $0<\e<\e_0$ we have
	\begin{equation}
		\label{im log variance}
			\var\left( \sum_{k=1}^N\Im \log\left( x_k + \ii \tau \right) \right) \leq C(1+ \e \log N ).
		\end{equation}
\end{lemma}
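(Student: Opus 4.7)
The plan is to adapt the second proof of Proposition~\ref{prop:variance} to the imaginary part. Using the identity $\partial_\eta \Im\log(x+\ii\eta)=\Re\pa{1/(x+\ii\eta)}$ and the fundamental theorem of calculus,
\[
\sum_{k=1}^N \Im\log(x_k+\ii\tau) \;=\; \sum_{k=1}^N \Im\log(x_k+\ii) \;-\; \int_\tau^1 N\Re\, s_W(\ii\eta)\, \rd\eta.
\]
The variance of the first term on the right, $\sum_k \Im\log(x_k+\ii)$, is $\OO(1)$ by the classical central limit theorem for linear statistics of Wigner matrices on macroscopic scales \cite{LytPas2009}, so it remains only to estimate the variance of the integral.

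I would then invoke Theorem~3 of \cite{KhoKhoPas}, which, as recalled in the second proof of Proposition~\ref{prop:variance} and combined with the rigidity estimate \eqref{eqn:rigidity}, yields uniformly for $\eta_1,\eta_2\in[N^{-1/10},1]$,
\[
\absb{\text{Cov}\pb{Ns_W(\ii\eta_1),\,Ns_W(-\ii\eta_2)}} \;\leq\; \frac{C}{(\eta_1+\eta_2)^2}+f(\eta_1,\eta_2)+\OO(N^{-1/10}),
\]
with $f$ bounded on compacts. Since $W$ is Hermitian, $s_W(-\ii\eta)=\overline{s_W(\ii\eta)}$, hence $N\Re s_W(\ii\eta)=\Re\qb{Ns_W(\ii\eta)}$, and expanding the variance of the integral yields a sum of four cross-covariances of this form. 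The dominant contribution is controlled by
\[
\iint_{[\tau,1]^2}\frac{1}{(\eta_1+\eta_2)^2}\,\rd\eta_1\,\rd\eta_2 \;\leq\; C(1+|\log\tau|)\;=\;C(1+\epsilon\log N),
\]
exactly as used in Proposition~\ref{prop:variance}, while the bounded $f$ and $\OO(N^{-1/10})$ contributions integrate to $\OO(1)$. Together with the macroscopic $\OO(1)$ term this yields \eqref{im log variance}.

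The main technical obstacle is ensuring that the covariance estimate from \cite{KhoKhoPas} is available throughout the integration region. This forces $\tau=N^{-\epsilon}\geq N^{-1/10}$, so one takes $\epsilon_0 \leq 1/10$, and uses \eqref{eqn:rigidity} to control the error terms arising in the derivation of the covariance bound down to almost-macroscopic scales, exactly as in the second proof of Proposition~\ref{prop:variance}. No new estimate beyond what is already developed there is needed: the structural identity $\partial_\eta \Im\log(x+\ii\eta)=\Re\,1/(x+\ii\eta)$ reduces the imaginary-part variance to the variance of an integral of $\Re s_W$, which is controlled by the same covariance machinery as $\Im s_W$.
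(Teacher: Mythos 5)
Your proposal is correct and is essentially the paper's own argument: the paper proves this lemma either via the Fourier bound on $\varphi_N(x)=\chi(x)\,\Im\log(x+\ii\tau)$ or, as it states explicitly, by noting that the second (resolvent-based) proof of Proposition \ref{prop:variance} ``applies without changes,'' which is exactly the route you take. Your write-up simply makes explicit the details the paper leaves implicit, namely the identity $\partial_\eta\,\Im\log(x+\ii\eta)=\Re\bigl(1/(x+\ii\eta)\bigr)$, the macroscopic $\OO(1)$ term at $\eta=1$, the decomposition of $\Re s_W$ into resolvents at $\pm\ii\eta$ with the Khorunzhy--Khoruzhenko--Pastur covariance bound extended by rigidity, and the restriction $\e_0\leq 1/10$.
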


	\noindent For the proof, let  $\chi_{[-5,5]}$ is a smooth indicator of the interval $[-5,5]$ and
	$
		\varphi_N(x) = \chi(x)\Im \log\left( x + \ii \tau \right).
	$
Our first proof of Proposition \ref{variance} shows it is enough to check that
	$
		\int \left| \hat{\varphi}_N(\xi) \right|^2 |\xi| \, \rd \xi = \OO\left(1+ \log \tau \right).
	$
We can verify this bound by integrating by parts as before. Alternatively, we can use the second proof of  Proposition \ref{variance}  based 
on the resolvent, which applies without changes.

\begin{bibdiv}
\begin{biblist}

\bib{AndGuiZei2010}{book}{
   author={Anderson, G. W.},
   author={Guionnet, A.},
   author={Zeitouni, O.},
   title={An introduction to random matrices},
   series={Cambridge Studies in Advanced Mathematics},
   volume={118},
   publisher={Cambridge University Press, Cambridge},
   date={2010},
}

\bib{Arg16}{article}{
    AUTHOR = {Arguin, L.-P.},
     TITLE = {Extrema of Log-correlated Random Variables: Principles and Examples},
   JOURNAL = {in Advances in Disordered Systems, Random Processes and Some Applications, Cambridge Univ. Press, Cambridge},
      YEAR = {2016},
        PAGES = {166--204}
}

\bib{ArgBelBou15}{article}{
    AUTHOR = {Arguin, L.-P.},
    AUTHOR = {Belius ,D.},
    AUTHOR = {Bourgade, P.},
     TITLE = {Maximum of the characteristic polynomial of random unitary matrices},
   JOURNAL = {{\it Comm. Math. Phys.}},
    VOLUME = {349},
      YEAR = {2017},
      PAGES = {703--751}
}

\bib{AufBenCer2013}{article}{
   author={Auffinger, A.},
   author={Ben Arous, G.},
   author={\v Cern\'y, J.},
   title={Random matrices and complexity of spin glasses},
   journal={Comm. Pure Appl. Math.},
   volume={66},
   date={2013},
   number={2},
   pages={165--201}
}

\bib{BaoPanZho2015}{article}{
   author={Bao, Z.},
   author={Pan, G.},
   author={Zhou, W.},
   title={The logarithmic law of random determinant},
   journal={Bernoulli},
   volume={21},
   date={2015},
   number={3},
   pages={1600--1628}
}

\bib{BerWebWon2017}{article}{
   author={Berestycki, N.},
   author={Webb, C.},
   author={Wong, M.-D.},
   title={Random Hermitian matrices and Gaussian multiplicative chaos},
   journal={Probab. Theor. Rel. Fields},
   volume={172},
   issue={1–2},
   pages={103--189},
   date={2018}
}

\bib{BorLaC2015}{article}{,
    AUTHOR = {Bornemann, F.}
    author= {La Croix, M.},
     TITLE = {The singular values of the {GOE}},
   JOURNAL = {Random Matrices Theory Appl.},
  FJOURNAL = {Random Matrices. Theory and Applications},
    VOLUME = {4},
      YEAR = {2015},
    NUMBER = {2},
     PAGES = {1550009, 32}
}

\bib{BouErdYauYin2016}{article}{
   AUTHOR = {Bourgade, P.},
   author={Erd\H os, L.},
   author={Yau, H.-T.},
   author={Yin, J.},
     TITLE = {Fixed energy universality for generalized {W}igner matrices},
   JOURNAL = {Comm. Pure Appl. Math.},
  FJOURNAL = {Communications on Pure and Applied Mathematics},
    VOLUME = {69},
      YEAR = {2016},
    NUMBER = {10},
     PAGES = {1815--1881}
}

\bib{BourgadeZeta}{article}{
    AUTHOR = {Bourgade, P.},
     TITLE = {Mesoscopic fluctuations of the zeta zeros},
   JOURNAL = {Probab. Theory Related Fields},
    VOLUME = {148},
      YEAR = {2010},
    NUMBER = {3-4},
     PAGES = {479--500}
}

\bib{BourgadeExtreme}{article}{
 author={Bourgade, P.}
 title={Extreme gaps between eigenvalues of Wigner matrices},
 journal={prepublication}
}

\bib{BouVuWoo2010}{article}{
AUTHOR = {Bourgain, J.},
author={Vu, V.},
author={Wood, P.},
     TITLE = {On the singularity probability of discrete random matrices},
   JOURNAL = {J. Funct. Anal.},
  FJOURNAL = {Journal of Functional Analysis},
    VOLUME = {258},
      YEAR = {2010},
    NUMBER = {2},
     PAGES = {559--603}
}

\bib{CacMalSch2015}{article}{
 author={Cacciapuoti, C.},
 author={Maltsev, A.},
 author={Schlein, B},
     TITLE = {Bounds for the {S}tieltjes transform and the density of states
              of {W}igner matrices},
   JOURNAL = {Probab. Theory Related Fields},
  FJOURNAL = {Probability Theory and Related Fields},
    VOLUME = {163},
      YEAR = {2015},
    NUMBER = {1-2},
     PAGES = {1--59}
}

\bib{CaiLiaZho2015}{article}{
 AUTHOR = {Cai, T.},
 author={ Liang, T.},
 author={Zhou, H.},
     TITLE = {Law of log determinant of sample covariance matrix and optimal
              estimation of differential entropy for high-dimensional
              {G}aussian distributions},
   JOURNAL = {J. Multivariate Anal.},
  FJOURNAL = {Journal of Multivariate Analysis},
    VOLUME = {137},
      YEAR = {2015},
     PAGES = {161--172}
}

\bib{ChaMadNaj16}{article}{
    AUTHOR = {Chhaibi, R.},
    AUTHOR = {Madaule,T.},
    AUTHOR = {Najnudel, J.},
     TITLE = {On the maximum of the C$\beta$E field},
   JOURNAL = {Duke Math. J.},
   volume={167},
   number={12},
   pages={2243--2345},
      YEAR = {2018},
}

\bib{CosTaoVu2006}{article}{
AUTHOR = {Costello, K.},
author={Tao, T.}, 
author={Vu, V.},
     TITLE = {Random symmetric matrices are almost surely nonsingular},
   JOURNAL = {Duke Math. J.},
  FJOURNAL = {Duke Mathematical Journal},
    VOLUME = {135},
      YEAR = {2006},
    NUMBER = {2},
     PAGES = {395--413},
}

\bib{DelLeC2000}{article}{
AUTHOR = {Delannay, R.},
author={Le Ca\"er, G.},
     TITLE = {Distribution of the determinant of a random real-symmetric
              matrix from the {G}aussian orthogonal ensemble},
   JOURNAL = {Phys. Rev. E (3)},
  FJOURNAL = {Physical Review E. Statistical, Nonlinear, and Soft Matter
              Physics},
    VOLUME = {62},
      YEAR = {2000},
    NUMBER = {2, part A},
     PAGES = {1526--1536}
}

\bib{Dem1989}{article}{
 author={Dembo, A.},
 title={On random determinants},
 journal={Quart. Appl. Math.},
 VOLUME = {47},
      YEAR = {1989},
    NUMBER = {2},
     PAGES = {185--195},
}

\bib{EdeLaC2015}{article}{
AUTHOR = {Edelman, A.},
author={La Croix, M.},
     TITLE = {The singular values of the {GUE} (less is more)},
   JOURNAL = {Random Matrices Theory Appl.},
  FJOURNAL = {Random Matrices. Theory and Applications},
    VOLUME = {4},
      YEAR = {2015},
    NUMBER = {4},
     PAGES = {1550021, 37},
}

\bib{ErdPecRmSchYau2010}{article}{
   author={Erd\H os, L.},
   author={P\'ech\'e, S.},
   author={Ram\'\i rez, J.},
   author={Schlein, B.},
   author={Yau, H.-T.},
   title={Bulk universality for Wigner matrices},
   journal={Comm. Pure Appl. Math.},
   volume={63},
   date={2010},
   number={7},
   pages={895--925}
}

\bib{ErdSchYau2011}{article}{
   author={Erd\H os, L.},
   author={Schlein, B.},
   author={Yau, H.-T.},
   title={Universality of random matrices and local relaxation flow},
   journal={Invent. Math.},
   volume={185},
   date={2011},
   number={1},
   pages={75--119}
}

\bib{ErdYauYin2012Bulk}{article}{
AUTHOR = {Erd\H os, L.},
author={Yau, H.-T.},
author={Yin, J.},
     TITLE = {Bulk universality for generalized {W}igner matrices},
   JOURNAL = {Probab. Theory Related Fields},
  FJOURNAL = {Probability Theory and Related Fields},
    VOLUME = {154},
      YEAR = {2012},
    NUMBER = {1-2},
     PAGES = {341--407}
}

\bib{ErdYauYin2012Rig}{article}{
AUTHOR = {Erd\H os, L.},
author={Yau, H.-T.},
author={Yin, J.},
     TITLE = {Rigidity of eigenvalues of generalized {W}igner matrices},
   JOURNAL = {Adv. Math.},
  FJOURNAL = {Advances in Mathematics},
    VOLUME = {229},
      YEAR = {2012},
    NUMBER = {3},
     PAGES = {1435--1515}
}

\bib{ForTuk1952}{article}{ 
 author={Forsythe, G. E.}
 author={Tukey, J.W.},
 title={The extent of n-random unit vectors},
 journal={Bulletin of the American Mathematical Society},
 volume={58},
 number={4}
 date={1952}
 pages={502-502}
}

\bib{For1951}{article}{
   author={Fortet, R.},
   title={Random determinants},
   journal={J. Research Nat. Bur. Standards},
   volume={47},
   date={1951},
   pages={465--470}
}

\bib{FyoHiaKea12}{article}{
  title = {Freezing Transition, Characteristic Polynomials of Random Matrices, and the {Riemann} Zeta Function},
  author = {Fyodorov, Y. V.},
  author = {Hiary, G. A.},
  author = {Keating, J. P.},
  journal = {Phys. Rev. Lett.},
  volume = {108},
  pages = {170601, 5pp.},
  year = {2012},
  publisher = {American Physical Society}
}

\bib{FyoSim2015}{article}{
    AUTHOR = {Fyodorov, Y. V.},
    AUTHOR = {Simm, N. J.},
     TITLE = {On the distribution of maximum value of the characteristic polynomial of GUE random matrices},
    journal={Nonlinearity},
   volume={29},
   date={2016},
   number={9},
   pages={2837--2855},
}

\bib{FyoWil2007}{article}{
   author={Fyodorov, Y. V.},
   author={Williams, I.},
   title={Replica symmetry breaking condition exposed by random matrix
   calculation of landscape complexity},
   journal={J. Stat. Phys.},
   volume={129},
   date={2007},
   number={5-6}
}

\bib{Gir1980}{book}{
AUTHOR = {G\=\i rko, V. L.},
     TITLE = {Theory of random determinants (Russian)},
 PUBLISHER = {``Vishcha Shkola'', Kiev},
      YEAR = {1980},
     PAGES = {368}
}

\bib{Gir1997}{article}{
 AUTHOR = {G\=\i rko, V. L.},
     TITLE = {A refinement of the central limit theorem for random
              determinants},
   JOURNAL = {Teor. Veroyatnost. i Primenen.},
  FJOURNAL = {Rossi\u\i skaya Akademiya Nauk. Teoriya Veroyatnoste\u\i \ i ee
              Primeneniya},
    VOLUME = {42},
      YEAR = {1997},
    NUMBER = {1},
     PAGES = {63--73}
}

\bib{Goo1963}{article}{
AUTHOR = {Goodman, N. R.},
     TITLE = {The distribution of the determinant of a complex {W}ishart
              distributed matrix},
   JOURNAL = {Ann. Math. Statist.},
  FJOURNAL = {Annals of Mathematical Statistics},
    VOLUME = {34},
      YEAR = {1963},
     PAGES = {178--180}
}

\bib{Gus2005}{article}{
author={Gustavsson, J.},
   title={Gaussian fluctuations of eigenvalues in the GUE},
   journal={Ann. Inst. H. Poincar\'e Probab. Statist.},
   volume={41},
   date={2005},
   number={2},
   pages={151--178},
}

\bib{KahKomSze1995}{article}{
AUTHOR = {Kahn, J.},
author={Koml\'os, J.},
author={Szemer\'edi, E.},
     TITLE = {On the probability that a random {$\pm 1$}-matrix is singular},
   JOURNAL = {J. Amer. Math. Soc.},
  FJOURNAL = {Journal of the American Mathematical Society},
    VOLUME = {8},
      YEAR = {1995},
    NUMBER = {1},
     PAGES = {223--240}
}

\bib{Kom1967}{article}{
AUTHOR = {Koml\'os, J.},
     TITLE = {On the determinant of {$(0,\,1)$} matrices},
   JOURNAL = {Studia Sci. Math. Hungar},
  FJOURNAL = {Studia Scientiarum Mathematicarum Hungarica. A Quarterly of
              the Hungarian Academy of Sciences},
    VOLUME = {2},
      YEAR = {1967},
     PAGES = {7--21}
}

\bib{KhoKhoPas}{article}{
   author={Khorunzhy, A. M.},
   author={Khoruzhenko, B. A.},
   author={Pastur, L. A.},
   title={Asymptotic properties of large random matrices with independent
   entries},
   journal={J. Math. Phys.},
   volume={37},
   date={1996},
   number={10},
   pages={5033--5060}
}

\bib{Kom1968}{article}{
AUTHOR = {Koml\'os, J.},
     TITLE = {On the determinant of random matrices},
   JOURNAL = {Studia Sci. Math. Hungar.},
  FJOURNAL = {Studia Scientiarum Mathematicarum Hungarica. A Quarterly of
              the Hungarian Academy of Sciences},
    VOLUME = {3},
      YEAR = {1968},
     PAGES = {387--399}
}

\bib{Kra2007}{article}{
  AUTHOR = {Krasovsky, I. V.},
     TITLE = {Correlations of the characteristic polynomials in the
              {G}aussian unitary ensemble or a singular {H}ankel
              determinant},
   JOURNAL = {Duke Math. J.},
  FJOURNAL = {Duke Mathematical Journal},
    VOLUME = {139},
      YEAR = {2007},
    NUMBER = {3},
     PAGES = {581--619}
}

\bib{LamPaq16}{article}{
author = {Lambert, G.},
author = {Paquette, E.},
journal = {Probability Theory and Related Fields},
title = {The law of large numbers for the maximum of almost Gaussian log-correlated fields coming from random matrices},
year = {2019},
volume={173},
issue={1--2},
pages={157--209},
}

\bib{LanSos2018}{article}{
 author={Landon, B.},
 author={Sosoe, P.},
 title={Applications of mesoscopic CLTs in Random Matrix Theory},
 journal={preprint},
 date={2018}
}

\bib{LanSosYau2016}{article}{
   author={Landon, B.},
   author={Sosoe, P.},
   author={Yau, H.-T.},
   title={Fixed energy universality for Dyson Brownian motion},
   journal={Advances in Mathematics},
   date={2019},
   volume={346},
   pages={1137--1332}
}

\bib{LytPas2009}{article}{
AUTHOR = {Lytova, A.},
author={Pastur, L.},
     TITLE = {Central limit theorem for linear eigenvalue statistics of
              random matrices with independent entries},
   JOURNAL = {Ann. Probab.},
  FJOURNAL = {The Annals of Probability},
    VOLUME = {37},
      YEAR = {2009},
    NUMBER = {5},
     PAGES = {1778--1840}, 
}

\bib{McK1969}{book}{
 author={McKean, H. P.},
 title={Stochastic Integrals},
 publisher={Academic Press, New York-London},
 year={1969},
 address={New York}
}

\bib{NguVu2014}{article}{
 AUTHOR = {Nguyen, H.},
 author={Vu, V.},
     TITLE = {Random matrices: law of the determinant},
   JOURNAL = {Ann. Probab.},
  FJOURNAL = {The Annals of Probability},
    VOLUME = {42},
      YEAR = {2014},
    NUMBER = {1},
     PAGES = {146--167}
}

\bib{NikSakWeb2018}{article}{
 AUTHOR = {Nikula, M.},
 author={Saksman, E.},
 author={Webb, C.},
     TITLE = {Multiplicative chaos and the characteristic polynomial of the CUE: the L1-phase},
   JOURNAL = {preprint  arXiv:1806.01831},
      YEAR = {2018}
}

\bib{NyqRicRio1954}{article}{ 
AUTHOR = {Nyquist, H.},
author={Rice, S.},
author={Riordan, J.},
     TITLE = {The distribution of random determinants},
   JOURNAL = {Quart. Appl. Math.},
  FJOURNAL = {Quarterly of Applied Mathematics},
    VOLUME = {12},
      YEAR = {1954},
     PAGES = {97--104}
}

\bib{ORo2010}{article}{
author={O'Rourke, S.},
   title={Gaussian fluctuations of eigenvalues in Wigner random matrices},
   journal={J. Stat. Phys.},
   volume={138},
   date={2010},
   number={6},
   pages={1045--1066}
}

\bib{PaqZei17}{article}{
author = {Paquette, E.},
author = {Zeitouni, O.},
journal = {International Mathematics Research Notices},
title = {The maximum of the CUE field},
year = {2017},
pages={1--92},
}

\bib{Pre1967}{article}{
AUTHOR = {Pr\'ekopa, A.},
     TITLE = {On random determinants. {I}},
   JOURNAL = {Studia Sci. Math. Hungar.},
  FJOURNAL = {Studia Scientiarum Mathematicarum Hungarica. A Quarterly of
              the Hungarian Academy of Sciences},
    VOLUME = {2},
      YEAR = {1967},
     PAGES = {125--132}
}

\bib{RemWes2005}{article}{
AUTHOR = {Rempa\l a, G.},
author={ and Weso\l owski, J.},
     TITLE = {Asymptotics for products of independent sums with an
              application to {W}ishart determinants},
   JOURNAL = {Statist. Probab. Lett.},
  FJOURNAL = {Statistics \& Probability Letters},
    VOLUME = {74},
      YEAR = {2005},
    NUMBER = {2},
     PAGES = {129--138}
}

\bib{Rou2007}{article}{
   author={Rouault, A.},
   title={Asymptotic behavior of random determinants in the Laguerre, Gram
   and Jacobi ensembles},
   journal={ALEA Lat. Am. J. Probab. Math. Stat.},
   volume={3},
   date={2007},
   pages={181--230},
}

\bib{SzeTur1937}{article}{ 
 author={Szekeres, G.},
 author={Tur\'{a}n, P.},
 title={On an extremal problem in the theory of determinants},
 journal={ Math. Naturwiss. Anz. Ungar. Akad. Wiss},
 volume={56},
 date={1937},
 pages={796-806}
}

\bib{TaoVu2006}{article}{
   author={Tao, T.},
   author={Vu, V.},
   title={On random $\pm1$ matrices: singularity and determinant},
   journal={Random Structures Algorithms},
   volume={28},
   date={2006},
   number={1},
   pages={1--23},
}

\bib{TaoVu2007}{article}{
   author={Tao, T.},
   author={Vu, V.},
   title={On the singularity probability of random Bernoulli matrices},
   journal={J. Amer. Math. Soc.},
   volume={20},
   date={2007},
   number={3},
   pages={603--628},
}

\bib{TaoVu2011}{article}{
   author={Tao, T.},
   author={Vu, V.},
   title={Random matrices: universality of local eigenvalue statistics},
   journal={Acta Math.},
   volume={206},
   date={2011},
   number={1},
   pages={127--204},
}

\bib{TaoVu2012}{article}{
   author={Tao, T.},
   author={Vu, V.},
   title={A central limit theorem for the determinant of a Wigner matrix},
   journal={Adv. Math.},
   volume={231},
   date={2012},
   number={1},
   pages={74--101},
}

\bib{TaoVu2014}{article}{
   author={Tao, T.},
   author={Vu, V.},
   title={Random matrices: the universality phenomenon for Wigner ensembles},
   book={
      series={Proc. Sympos. Appl. Math.},
      volume={72},
      publisher={Amer. Math. Soc., Providence, RI},
   },
   date={2014},
   pages={121--172},
}

\bib{Tur1955}{article}{
   author={Tur\'an, P.},
   title={On a problem in the theory of determinants},
   language={Chinese, with English summary},
   journal={Acta Math. Sinica},
   volume={5},
   date={1955},
   pages={411--423},
}

\bib{Web2015}{article}{
   author={Webb, C.},
   title={The characteristic polynomial of a random unitary matrix and
   Gaussian multiplicative chaos---the $L^2$-phase},
   journal={Electron. J. Probab.},
   volume={20},
   date={2015},
   pages={no. 104, 21}
}

\end{biblist}
\end{bibdiv}

\end{document}